\numberwithin{equation}{section}
\numberwithin{figure}{section}
\theoremstyle{plain}
  \theoremstyle{plain}
  \theoremstyle{plain}
\newtheorem{theorem}{\protect\theoremname}[section]
\newtheorem*{theorem*}{Theorem}
  \theoremstyle{plain}
  \newtheorem{corollary}[theorem]{\protect\corollaryname}
  \theoremstyle{plain}
    \newtheorem{definition}[theorem]{Definition}
  \theoremstyle{plain}
     \newtheorem{proposition}[theorem]{Proposition}
  \theoremstyle{plain}
  \newtheorem{lemma}[theorem]{\protect\lemmaname}
  \theoremstyle{plain}
  \newtheorem{remark}[theorem]{\protect\remarkname}
    \newtheorem{example}[theorem]{Example}
        \newtheorem{question}{Question}
\DeclareMathOperator{\esssup}{ess\sup}
\DeclareMathOperator{\essinf}{ess\inf}
\newcommand{\tom}{\mathbb{T}^\omega}
\def\XXint#1#2#3{{\setbox0=\hbox{$#1{#2#3}{%
\int}$ }
\vcenter{\hbox{$#2#3$ }}\kern-.6\wd0}}
  \providecommand{\lemmaname}{Lemma}
   \providecommand{\corollaryname}{Corollary}
  \providecommand{\remarkname}{Remark}
\providecommand{\theoremname}{Theorem}
\providecommand{\keywords}[1]{\textbf{\textit{Index terms---}} #1}
\begin{document}

\title[ ]{Maximal operators\\ on the infinite-dimensional torus}

\keywords{Infinite-dimensional torus, maximal operator, Muckenhoupt weight}

\subjclass{Primary: 43A70, Secondary: 20E07, 42B05, 42B25}

\author{Dariusz Kosz, Javier C. Mart\'{i}nez-Perales, Victoria Paternostro, Ezequiel Rela, and Luz Roncal}
\address{
}
\email{}

\address[D. Kosz]{
BCAM -- Basque Center for Applied Mathematics \\
48009 Bilbao, Spain and Faculty of Pure and Applied Mathematics, Wroc\l aw University of Science and Technology \\
Wybrze\.ze Wyspia\'nskiego 27, 50-370 Wroc\l aw, Poland
}
\email{dariusz.kosz@pwr.edu.pl}

\address[J.C. Mart\'{i}nez-Perales]{
BCAM -- Basque Center for Applied Mathematics\\
48009 Bilbao, Spain
}
\email{jmartinez@bcamath.org}

\address[V. Paternostro and E. Rela]{  Departamento de Matem\'atica, Facultad de Ciencias Exactas y Naturales, Universidad de Buenos Aires, Buenos Aires, Argentina}
\email{\{vpater,erela\}@dm.uba.ar}

\address[L. Roncal]{
BCAM -- Basque Center for Applied Mathematics\\
48009 Bilbao, Spain, Ikerbasque, Basque Foundation for Science, 48011 Bilbao, Spain, and Universidad del Pa\'is Vasco / Euskal Herriko Unibertsitatea, 48080 Bilbao, Spain}
\email{lroncal@bcamath.org}

\begin{abstract}
We study maximal operators related to bases on the infinite-dimensional torus $\tom$. {For the normalized Haar measure $dx$ on $\tom$ it is known that $M^{\mathcal{R}_0}$, the maximal operator associated with the dyadic basis $\mathcal{R}_0$, is of weak type $(1,1)$, but $M^{\mathcal{R}}$, the operator associated with the natural general basis $\mathcal{R}$, is not. We extend the latter result to all $q \in [1,\infty)$. Then we find a wide class of intermediate bases $\mathcal{R}_0 \subset \mathcal{R}' \subset \mathcal{R}$, for which maximal functions have controlled, but sometimes very peculiar behavior.} Precisely, for given $q_0 \in [1, \infty)$ we construct $\mathcal{R}'$ such that $M^{\mathcal{R}'}$ is of restricted weak type $(q,q)$ if and only if $q$ belongs to a predetermined range of the form $(q_0, \infty]$ or $[q_0, \infty]$. Finally, we study the weighted setting, considering the Muckenhoupt $A_p^\mathcal{R}(\tom)$ and reverse H\"older $\mathrm{RH}_r^\mathcal{R}(\tom)$ classes of weights associated with $\mathcal{R}$. For each $p \in (1, \infty)$ and each $w \in A_p^\mathcal{R}(\tom)$ we obtain that $M^{\mathcal{R}}$ is not bounded on $L^q(w)$ in the whole range $q \in [1,\infty)$. Since we are able to show that 
\[
\bigcup_{p \in (1, \infty)}A_p^\mathcal{R}(\tom) = \bigcup_{r \in (1, \infty)} \mathrm{RH}_r^\mathcal{R}(\tom),
\]
the unboundedness result applies also to all reverse H\"older weights.
\end{abstract}

\maketitle

\begin{center}

\end{center}

\tableofcontents

\section{Introduction and main results}\label{sec:Intro}

 The  infinite dimensional torus $\tom$ is the compact abelian group consisting of countable infinite many copies of the one dimensional torus $\mathbb{T}:=\mathbb{R}/\mathbb{Z}$, understood as the interval $[0,1]$ with its endpoints identified.  We refer to Subsection~\ref{subsec:tom} for basic definitions related to $\tom$. In a recent paper, Fern\'andez and the fifth author \cite{Fernandez-Roncal2020} introduced a Calder\'on--Zygmund decomposition in $\tom$. By defining a suitable ``dyadic'' basis (the \textit{restricted Rubio de Francia} basis, denoted by $\mathcal{R}_0$), they proved that the corresponding ``dyadic'' Hardy--Littlewood maximal function  satisfies the weak type $(1,1)$ inequality. Soon thereafter, the first author \cite{Kosz2020} showed that the maximal function associated with a wider natural basis (the \textit{Rubio de Francia basis}, denoted by $\mathcal{R}$) is not of weak type $(1,1)$. See Subsection~\ref{sub:bases} for the definitions of $\mathcal{R}_0$ and $\mathcal{R}$.

Maximal operators are central tools in the theory of harmonic analysis. In the Euclidean setting, for instance, the boundedness of the maximal function is used to prove the Lebesque differentiation theorem and the ergodic theorem. In general, a precise knowledge of the mapping properties of maximal functions is crucial to develop further investigation in the corresponding setting. 

Motivated by the above facts, a first natural question arises: 
\begin{itemize}
\item What is the behavior of the maximal operators associated with bases $\mathcal{R}'$, denoted by $M^{\mathcal{R}'}$, which fall between the restricted Rubio de Francia basis $\mathcal{R}_0$ and the Rubio de Francia basis $\mathcal{R}$?
\end{itemize}

Our first main result will be as follows: Let $q \in [1, \infty)$. We can find a wide class of intermediate bases $\mathcal{R}_0 \subset \mathcal{R}' \subset \mathcal{R}$, for which  the corresponding maximal functions is not even of restricted weak type $(q,q)$. The result will be accomplished through a special construction involving what we will call $(\varepsilon,l)$\textit{-configurations}, which are defined as follows. 

\begin{definition}\label{epsn}
Let $0 < \varepsilon \leq \frac{1}{2}$ and $l \in \mathbb{N}$. A collection $\{Q^{(1)}, \dots, Q^{(l)}\} \subset \mathcal{R}$ is an $(\varepsilon, l)$-configuration if there exists a measurable
set $A \subset \tom$ such that
\begin{enumerate}
    \item $|Q^{(k)}|=|A|$ and $\varepsilon |A| \leq | Q^{(k)} \cap A| \leq (1- \varepsilon) |A|$ hold for each $k$,
    \item the sets $Q^{(k)} \setminus A$, $k \in \{1, \dots, l\}$, are disjoint. 
\end{enumerate}
\end{definition}

\begin{theorem}\label{unweighted_unboundedness}
 Let $q \in [1, \infty)$. Consider a sequence $\{l_j\}_{j\in\mathbb{N}}$ of natural numbers and a sequence $\{\varepsilon_j\}_{j\in\mathbb{N}}$ of parameters contained in $(0,1/2]$ such that $\sup_{j\in\mathbb{N}} \varepsilon_j^{q+1}l_j=\infty$. If $\mathcal{R}'=\mathcal{R}_0\cup S$, where $S$ is the union of a sequence $\{S_j\}_{j\in\mathbb{N}}$ with $S_j$ being an $(\varepsilon_j,l_j)$-configuration for all $j\in\mathbb{N}$, then $M^{\mathcal{R}'}$ is not of restricted weak type $(q,q)$. 
 \end{theorem}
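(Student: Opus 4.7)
The strategy is to exhibit, for every $C>0$, a set $E$ and a level $\lambda>0$ such that
\[
|\{x\in\tom : M^{\mathcal{R}'}\chi_E(x)>\lambda\}| > \frac{C}{\lambda^q}\,|E|,
\]
which contradicts the restricted weak type $(q,q)$ inequality. Since $\sup_j \varepsilon_j^{q+1}l_j = \infty$, it suffices to produce, for each $j$, such a pair $(E_j,\lambda_j)$ whose effective constant is of order $\varepsilon_j^{q+1}l_j$ and then let $j$ run.

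For fixed $j$, let $S_j=\{Q^{(1)},\dots,Q^{(l_j)}\}$ be an $(\varepsilon_j,l_j)$-configuration with associated set $A_j$. Set $E_j:=A_j$ and $\lambda_j:=\varepsilon_j/2$. For any point $x\in Q^{(k)}$, since $Q^{(k)}\in S_j\subset\mathcal{R}'$, the rectangle $Q^{(k)}$ is an admissible set in the definition of $M^{\mathcal{R}'}$, so by condition (1) of Definition \ref{epsn},
\[
M^{\mathcal{R}'}\chi_{A_j}(x) \;\geq\; \frac{|Q^{(k)}\cap A_j|}{|Q^{(k)}|}\;\geq\;\frac{\varepsilon_j|A_j|}{|A_j|}\;=\;\varepsilon_j\;>\;\lambda_j.
\]
Therefore $\bigcup_{k=1}^{l_j}Q^{(k)}\subseteq\{M^{\mathcal{R}'}\chi_{A_j}>\lambda_j\}$.

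To lower bound the measure of this super-level set, I use condition (2): the sets $Q^{(k)}\setminus A_j$, $k=1,\ldots,l_j$, are pairwise disjoint. From condition (1), $|Q^{(k)}\setminus A_j|=|Q^{(k)}|-|Q^{(k)}\cap A_j|\geq |A_j|-(1-\varepsilon_j)|A_j|=\varepsilon_j|A_j|$. Consequently
\[
\bigl|\{M^{\mathcal{R}'}\chi_{A_j}>\lambda_j\}\bigr|\;\geq\;\Bigl|\bigcup_{k=1}^{l_j}(Q^{(k)}\setminus A_j)\Bigr|\;=\;\sum_{k=1}^{l_j}|Q^{(k)}\setminus A_j|\;\geq\; l_j\,\varepsilon_j\,|A_j|.
\]
Combining with $|E_j|=|A_j|$ and $\lambda_j=\varepsilon_j/2$, the ratio that appears in the restricted weak type inequality satisfies
\[
\frac{\lambda_j^q\,\bigl|\{M^{\mathcal{R}'}\chi_{E_j}>\lambda_j\}\bigr|}{|E_j|}\;\geq\;\frac{1}{2^q}\,\varepsilon_j^{\,q+1}\,l_j.
\]

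By hypothesis the right hand side is unbounded as $j$ varies, so no finite constant $C$ can serve in the restricted weak type $(q,q)$ inequality, which proves the theorem. There is no serious obstacle in this argument: the non-trivial content was already packed into the definition of an $(\varepsilon,l)$-configuration, specifically into the disjointness clause (2), which is precisely what converts the pointwise lower bound $M^{\mathcal{R}'}\chi_{A_j}\geq\varepsilon_j$ on $\bigcup_k Q^{(k)}$ into a linear-in-$l_j$ lower bound on the measure of the level set. The only care needed is to pick $\lambda_j$ strictly below $\varepsilon_j$ so as to obtain strict inequality in the super-level set, which costs only the harmless factor $2^{-q}$.
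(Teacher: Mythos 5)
Your proof is correct and is essentially identical to the paper's: both take the test function $\chi_{A_j}$, observe that the maximal function exceeds $\varepsilon_j$ on each $Q^{(k)}$, and use the disjointness of the sets $Q^{(k)}\setminus A_j$ together with $|Q^{(k)}\setminus A_j|\ge\varepsilon_j|A_j|$ to make the level set's measure at least $\varepsilon_j l_j|A_j|$, yielding the lower bound $2^{-q}\varepsilon_j^{q+1}l_j$ for the weak-$(q,q)$ ratio (the $q$-th power of the paper's $\tfrac12(\varepsilon_j^{q+1}l_j)^{1/q}$).
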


In particular, from Theorem~\ref{unweighted_unboundedness} we deduce the following result for $M^{\mathcal{R}}$.

\begin{corollary}\label{corollary}
Let $q \in [1, \infty)$. The maximal operator $M^{\mathcal{R}}$ is not of restricted weak type $(q,q)$.
\end{corollary}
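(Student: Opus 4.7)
The plan is to deduce the corollary from Theorem \ref{unweighted_unboundedness} by exhibiting a sub-basis $\mathcal{R}' \subset \mathcal{R}$ that fulfills its hypotheses and then invoking the pointwise comparison $M^{\mathcal{R}'} f \leq M^{\mathcal{R}} f$, which holds for $f \geq 0$ since enlarging the basis can only enlarge the maximal operator. Any failure of restricted weak type $(q,q)$ for $M^{\mathcal{R}'}$ thus transfers to $M^{\mathcal{R}}$.

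To secure the hypothesis $\sup_{j} \varepsilon_j^{q+1} l_j = \infty$ uniformly in $q$, I would fix $\varepsilon_j = 1/2$ for all $j$ and produce $(1/2, j)$-configurations inside $\mathcal{R}$; this yields $\varepsilon_j^{q+1} l_j = 2^{-q-1} j \to \infty$ for every $q \in [1, \infty)$. The construction exploits the fact that $\tom$ has infinitely many coordinates. For each $j \in \mathbb{N}$ set
\[
A_j = \{x \in \tom : x_i \in [0, 1/2) \text{ for } 1 \leq i \leq j\}, \qquad |A_j| = 2^{-j},
\]
and for $k \in \{1, \dots, j\}$ let
\[
Q^{(j,k)} = \{x \in \tom : x_i \in [0, 1/2) \text{ for } i \leq j,\ i \neq k,\ \text{and } x_k \in [1/4, 3/4)\}.
\]
Each $Q^{(j,k)}$ belongs to $\mathcal{R}$, and a direct computation gives $|Q^{(j,k)}| = 2^{-j} = |A_j|$ together with $|Q^{(j,k)} \cap A_j| = 2^{-j-1} = |A_j|/2$, so condition (1) of Definition \ref{epsn} holds with $\varepsilon = 1/2$. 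Moreover, $Q^{(j,k)} \setminus A_j$ has its $k$-th coordinate confined to $[1/2, 3/4)$, whereas for $k' \neq k$ the set $Q^{(j,k')} \setminus A_j$ has $k$-th coordinate in $[0, 1/2)$; these remainders are therefore pairwise disjoint, and condition (2) holds as well.

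Setting $S_j = \{Q^{(j,1)}, \dots, Q^{(j,j)}\}$, $S = \bigcup_{j \in \mathbb{N}} S_j$, and $\mathcal{R}' = \mathcal{R}_0 \cup S$, Theorem \ref{unweighted_unboundedness} yields that $M^{\mathcal{R}'}$ is not of restricted weak type $(q,q)$, and the pointwise comparison completes the argument. I do not expect a serious obstacle here: the whole proof reduces to the simple combinatorial observation that shifting a single coordinate at a time produces configurations with pairwise disjoint complements, which is exactly what Theorem \ref{unweighted_unboundedness} requires.
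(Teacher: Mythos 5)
Your overall strategy coincides with the paper's: produce $(\tfrac12,j)$-configurations $S_j\subset\mathcal{R}$, note that $\varepsilon_j^{q+1}l_j = 2^{-q-1}j\to\infty$ for every $q\in[1,\infty)$, apply Theorem~\ref{unweighted_unboundedness} to $\mathcal{R}'=\mathcal{R}_0\cup\bigcup_j S_j$, and transfer via $M^{\mathcal{R}'}\le M^{\mathcal{R}}$. The paper does exactly this, citing Example~\ref{ex:epsn} for the existence of the configurations.

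The gap is in your explicit construction: the sets
\[
Q^{(j,k)}=\{x\in\tom:\ x_i\in[0,\tfrac12)\ \text{for}\ i\le j,\ i\ne k,\ \text{and}\ x_k\in[\tfrac14,\tfrac34)\}
\]
are \emph{not} elements of $\mathcal{R}$ once $j\ge 3$, so $S_j=\{Q^{(j,1)},\dots,Q^{(j,j)}\}$ fails to be a configuration in the sense of Definition~\ref{epsn}, which requires $\{Q^{(1)},\dots,Q^{(l)}\}\subset\mathcal{R}$. Indeed $\mathcal{R}$ consists exactly of the translates $g+V_m$, and the formula \eqref{eq:fundamental} forces each $V_m$ with $j$ nonfree components to have sides of length $2^{-(j-1)}$ or $2^{-j}$ in all but at most one coordinate; only $V_1$ and $V_2$ have every nonfree side equal to $\tfrac12$. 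Your $Q^{(j,k)}$ is a translate of $[0,\tfrac12)^j\times\mathbb{T}^{j,\omega}$, which therefore has no match in $\mathcal{R}$ for $j\ge 3$. (The auxiliary set $A_j$ need only be measurable, so that part is fine; the problem is with the cubes themselves.)

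The repair is exactly what Example~\ref{ex:epsn} provides: fix any genuine dyadic cube $Q_j\in\mathcal{R}_0$ with $\ell(m_{Q_j})\ge j$, e.g. $Q_j = V_{(j-1)^2+1}$, whose nonfree part is $(0,2^{-(j-1)})^{j-1}\times(0,\tfrac12)$. Take $A=Q_j$ and, for $k\in\{1,\dots,j\}$, set $Q^{(k)}=T^{(k)}+Q_j$ with $T^{(k)}=(0,\dots,0,\tfrac12|\pi_k(Q_j)|,0,\dots)$. These are translates of a bona fide $V_m$, hence lie in $\mathcal{R}$; they satisfy $|Q^{(k)}|=|Q_j|$, $|Q^{(k)}\cap Q_j|=\tfrac12|Q_j|$, and the remainders $Q^{(k)}\setminus Q_j$ are pairwise disjoint because the $k$-th coordinate is pushed outside $\pi_k(Q_j)$ in exactly one of them. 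Only ratios enter your estimate, so the rest of the argument goes through verbatim.
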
 

Theorem \ref{unweighted_unboundedness} sheds some light  on the question of characterizing threshold basis for which the associated maximal function is unbounded. It is not easy to find the exact threshold properties, and the notion of $(\varepsilon,l)$-configuration is an important step towards this goal. The second main result exploits the strategy introduced in Theorem~\ref{unweighted_unboundedness} more efficiently to get, indeed,  a characterization. Namely, for given $q_0 \in (1, \infty)$ we find $\mathcal{R}'$ with $\mathcal{R}_0\subset \mathcal{R}'\subset \mathcal{R}$ such that the restricted weak type $(q,q)$ inequality for the associated maximal operator holds if and only if $q$ belongs to a predetermined range of the form $[q_0,\infty]$ or $(q_0, \infty]$. 
The structure used for this relies on a particular example of $(\varepsilon, l)$-configuration. Namely, we will consider \textit{$(\varepsilon,l)$-configurations around sets} $Q\in\mathcal{R}_0$, see Definition \ref{def:confaround}.

 \begin{theorem}\label{thm:r_0}
 Let $\{Q_j\}_{j\in\mathbb{N}}$ be a sequence of dyadic cubes. Let $\mathcal{R}'=\mathcal{R}_0\cup S$, where $S$ is the union of a sequence $\{S_j\}_{j\in\mathbb{N}}$ of collections of cubes in $\mathcal{R}$ such that:
 \begin{enumerate}
     \item $S_j$ is an $(\varepsilon_j,l_j)$-configuration around $Q_j$ with $0<\varepsilon_j\leq \frac{1}{2}$ and $l_j\in\mathbb{N}$ for all $j\in\mathbb{N}$.
     \item The family   $\{E_j\}_{j\in\mathbb{N}}$ of the sets $E_j:=\bigcup_{Q\in S_j}Q$ is pairwise disjoint.
 \end{enumerate}
Then $M^{\mathcal{R}'}$ is not of restricted weak type $(1,1)$ if and only if $\sup_{j \in \mathbb{N}} l_j = \infty$, and $M^{\mathcal{R}'}$ is not of restricted weak type $(q,q)$, $q \in (1, \infty)$, if and only if $\sup_{j \in \mathbb{N}} \varepsilon_j l_j^{1/q} = \infty$.
 \end{theorem}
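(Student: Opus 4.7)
The plan is to exploit the disjointness of the sets $\{E_j\}_j$ to decouple the action of $M^{\mathcal{R}'}$ across the collections $S_j$, reducing the problem to uniform-in-$j$ estimates for each $M^{S_j}$ that can be combined with the already known weak $(1,1)$ bound for $M^{\mathcal{R}_0}$ (and hence strong $(q,q)$ for $q>1$ by Marcinkiewicz interpolation). Since $Q\subseteq E_j$ for every $Q\in S_j$, the average $\langle \mathbf{1}_F\rangle_Q$ depends only on $F\cap E_j$, and the level sets $\{M^{S_j}\mathbf{1}_{F\cap E_j}>\alpha\}\subseteq E_j$ are pairwise disjoint in $j$, so
\[
|\{M^{\mathcal{R}'}\mathbf{1}_F>\alpha\}|\le|\{M^{\mathcal{R}_0}\mathbf{1}_F>\alpha\}|+\sum_j|\{M^{S_j}\mathbf{1}_{F\cap E_j}>\alpha\}|.
\]

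For the unboundedness direction, I would fix a sequence $j_n$ realizing $\varepsilon_{j_n}l_{j_n}^{1/q}\to\infty$ (for $q=1$, with $l_{j_n}\to\infty$) and choose a test set $F_n\subseteq Q_{j_n}$ of measure comparable to $\varepsilon_{j_n}|Q_{j_n}|$ for which a definite fraction of the cubes of $S_{j_n}$ admit $|Q\cap F_n|/|Q|\gtrsim \varepsilon_{j_n}$; such an $F_n$ is produced by a Paley--Zygmund / pigeonhole selection on the multiplicity function $\sum_{Q\in S_{j_n}}\mathbf{1}_{Q}$ restricted to $Q_{j_n}$, using the ``around $Q_{j_n}$'' structure from Definition~\ref{def:confaround}. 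The disjointness of the exterior parts $Q\setminus Q_{j_n}$, each of measure at least $\varepsilon_{j_n}|Q_{j_n}|$, then gives $|\{M^{\mathcal{R}'}\mathbf{1}_{F_n}>\alpha\}|\gtrsim l_{j_n}\varepsilon_{j_n}|Q_{j_n}|$ at a level $\alpha\sim\varepsilon_{j_n}$, and plugging $(F_n,\alpha)$ into the restricted weak-type $(q,q)$ inequality would force $\varepsilon_{j_n}^{\,q}l_{j_n}=(\varepsilon_{j_n}l_{j_n}^{1/q})^q\lesssim 1$, contradicting the hypothesis.

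For the boundedness direction, assume $\varepsilon_jl_j^{1/q}\le C_0$ for every $j$, and write $F_j=F\cap E_j$. I would dichotomize each $j$ according to whether $|F_j\cap Q_j|\le \alpha|Q_j|/2$ (Case A) or $>\alpha|Q_j|/2$ (Case B). In Case A any cube $Q\in S_j$ contributing to the level set must satisfy $|Q\cap F_j\setminus Q_j|>\alpha|Q_j|/2$, so the disjointness of $\{Q\setminus Q_j\}_{Q\in S_j}$ simultaneously bounds the number of such cubes by $2|F_j\setminus Q_j|/(\alpha|Q_j|)$ and forces $|Q_j|\lesssim |F_j|/\alpha$, yielding a contribution $\lesssim|F_j|/\alpha\le\alpha^{-q}|F_j|$ (valid since we may assume $\alpha\le1$). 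In Case B one has $Q_j\subseteq\{M^{\mathcal{R}_0}\mathbf{1}_F>\alpha/2\}$, so summing $|Q_j|$ over Case B indices is absorbed by the strong $(q,q)$ bound for $M^{\mathcal{R}_0}$, while the exterior contribution $\sum_{Q\in K_j}|Q\setminus Q_j|$ is controlled by combining $|Q_j|\lesssim|F_j|/\alpha$, the global disjointness constraint $l_j\varepsilon_j|Q_j|\le 1$, and the hypothesis $\varepsilon_jl_j^{1/q}\le C_0$ to keep $l_j|Q_j|$ from exceeding $C\alpha^{-q}|F_j|$.

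The hardest point is matching exponents in the two directions. On the unboundedness side, reaching the sharp $l_j\varepsilon_j^{\,q}$ threshold (rather than the weaker $l_j\varepsilon_j^{\,q+1}$ that follows from the cruder test $F=Q_j$, which is what Theorem~\ref{unweighted_unboundedness} exploits) hinges on the refined choice of $F_n$ as a subset of $Q_{j_n}$ with good overlap with most cubes of $S_{j_n}$, which is where the ``around $Q_j$'' structure in Definition~\ref{def:confaround} enters essentially. On the boundedness side, Case B requires a careful split of the level range $\alpha$ relative to $\varepsilon_j$ to interpolate between the pigeonhole bound $|K_j|\le l_j$ and the disjoint-exterior bound $\sum_{Q\in K_j}|Q\setminus Q_j|\le|F_j\setminus Q_j|/\text{(something)}$, using precisely the assumption $\varepsilon_jl_j^{1/q}\le C_0$ to ensure these two partial bounds match at the crossover.
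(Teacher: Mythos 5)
Your high-level decomposition — decoupling across $j$ using the disjointness of $\{E_j\}$, treating $M^{\mathcal{R}_0}$ by interpolation, and reducing to uniform estimates for $M^{S_j}$ on test sets $U\subset E_j$ — matches the paper's reduction. The two directions, however, each have an issue.

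For the unboundedness direction your proposal is over-engineered because of a misunderstanding of the configuration. You write that the crude test $F=Q_j$ would only yield $\varepsilon_j^{q+1}l_j$ and that a Paley--Zygmund selection is needed to upgrade this. That is true for the \emph{general} $(\varepsilon,l)$-configuration of Theorem~\ref{unweighted_unboundedness}, where the exterior pieces $Q^{(k)}\setminus A$ are only guaranteed to have measure $\geq\varepsilon|A|$. But a configuration \emph{around} $Q_j$ (Definition~\ref{def:confaround}) has $|Q^{(k)}\cap Q_j|=\varepsilon_j|Q_j|$ exactly, hence $|Q^{(k)}\setminus Q_j|=(1-\varepsilon_j)|Q_j|\geq\tfrac12|Q_j|$. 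So $F=Q_j$ already gives a level set of measure $\gtrsim l_j|Q_j|$ at height $\varepsilon_j$, and the ratio is $\gtrsim\varepsilon_jl_j^{1/q}$ with no refined selection needed — this is exactly what the paper does. Separately, for $q=1$ the test set is not one of measure $\sim\varepsilon_j|Q_j|$: the paper uses $U_j=\bigcap_k Q_j^{(k)}$ with $|U_j|=\varepsilon_j^{l_j}|Q_j|$, whose average over each $Q_j^{(k)}$ is $\varepsilon_j^{l_j}$, and the ratio is then $\gtrsim l_j$. Your $q=1$ sketch does not produce this.

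The real gap is in the boundedness direction. Case~A is fine. In Case~B, writing $K_j$ for the cubes of $S_j$ with $|Q\cap F_j|/|Q|>\alpha$, the identity to control is
\[
\alpha|K_j|\,|Q_j|\;<\;|F_j\setminus Q_j|+\int_{F_j\cap Q_j}\Big(\textstyle\sum_{Q\in S_j}\chi_Q\Big)\,dx.
\]
Your crude bound $|K_j|\le l_j$ combined with $|Q_j|\lesssim|F_j|/\alpha$ forces $\alpha^{q-1}\lesssim\varepsilon_j^q$, which fails for moderate $\alpha$; and the multiplicity integral on the right can genuinely be as large as $l_j\,|F_j\cap Q_j|$ when $F_j\cap Q_j$ is concentrated where the overlap count peaks. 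What is needed is a quantitative bound on the distribution of the multiplicity function $\sum_{Q\in S_j}\chi_Q$ restricted to $Q_j$. The paper observes that this function is $\mathrm{Bin}(m,\varepsilon_j)$-distributed over $(Q_j,dx/|Q_j|)$ once one restricts to the $m$ cubes actually contributing at level $\lambda$, passes to the decreasing rearrangement $X_m^*$, and reduces the problem to the uniform estimate of $\sum_{l=0}^m\mathbb{P}(X_m^*\ge l)^{1-1/q}$ in terms of $m^{1-1/q}$. Chebyshev's inequality suffices only for $q\geq(1+\sqrt5)/2$; for $q\in(1,(1+\sqrt5)/2)$ the paper invokes Lata\l{}a's higher-moment bounds for binomial variables. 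The appearance of the golden ratio as a threshold shows that a ``careful split of $\alpha$'' based only on a pigeonhole/variance comparison — which is essentially what your Case~B sketch proposes — cannot cover the full range of $q$. Your proposal does not contain a substitute for this probabilistic ingredient, and this is where the proof genuinely lives.
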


The introduction of $(\varepsilon,l)$-configurations plays a prominent role in our results, let us explain the heuristics at this point. A natural way to look for a threshold basis, namely, a basis $\mathcal{R}'$ for which the corresponding maximal function stops being bounded, is to add cubes to the dyadic basis $\mathcal{R}_0$ and study the properties of the associated maximal function. As it will be described in Section \ref{sec:noweight}, several simple reductions rule out specific constructions of basis which have a bounded associated maximal function. It turns out that a good choice of additional cubes, in the sense that they make the new basis have an unbounded maximal function, consists of controlled non-dyadic translations of cubes of the basis $\mathcal{R}_0$, this is the content of Theorem \ref{unweighted_unboundedness}. Yet another refinement entails translations of sequences of dyadic cubes so that each dyadic cube is translated in a different direction in such a way that the resulting collection has a bounded overlapping in the $L^1(\tom)$ sense (this is reminiscent of the so-called sparse collections, see Subsection \ref{unweighted_unboundedness}). The interplay between the number of translations and the scale of the translations is fundamental in this procedure. If the number of translations is relatively large compared to the size of each translation, or the other way around, the construction leads to a basis with associated maximal function which is not bounded. This is the idea behind $(\varepsilon,l)$-configurations around sets $Q\in \mathcal{R}_0$ (see Figure~\ref{fig:el-conf}) in the characterization in Theorem \ref{thm:r_0}.

 The proof of Theorem \ref{thm:r_0} combines elements from harmonic analysis and probability, and has a combinatorial flavor. The result reveals interesting substantial differences with the finite-dimensional Euclidean setting and it bears out that the geometric nature of the infinite-dimensional torus leads to unexpected behavior of maximal functions.  
Using Theorem~\ref{thm:r_0} we obtain the following.

\begin{corollary}\label{corollary2}
\label{threshold}
Let $q_0 \in [1, \infty)$.
\begin{enumerate}
    \item It is possible to find a basis $\mathcal{R}'$ with $\mathcal{R}_0\subseteq\mathcal{R}'\subset \mathcal{R}$ such that $M^{\mathcal{R}'}$ is of restricted weak type $(q,q)$ if $q \in [q_0, \infty]$ and is not if $q \in [1, q_0)$.
    \item Similarly, it is possible to find a basis $\mathcal{R}'$ with $\mathcal{R}_0\subset\mathcal{R}'\subset \mathcal{R}$ such that $M^{\mathcal{R}'}$ is of restricted weak type $(q,q)$ if $q \in (q_0, \infty]$ and is not if $q \in [1, q_0]$.
\end{enumerate}
Note that for item (1) the choice $q_0=1$ is trivial: choose $\mathcal{R}'=\mathcal{R}_0$. Also, we know that, choosing $\mathcal{R}'=\mathcal{R}$, we get a basis for which $M^{\mathcal{R}'}$ is unbounded for all  $q\in[1,\infty)$. 

\end{corollary}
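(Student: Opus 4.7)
The plan is to view Corollary~\ref{corollary2} as essentially a numerical calibration of the parameters allowed by Theorem~\ref{thm:r_0}. Given sequences $\{l_j\}$ and $\{\varepsilon_j\}$ (together with a sequence $\{Q_j\}$ producing the required disjointness), Theorem~\ref{thm:r_0} tells us that $M^{\mathcal{R}'}$ fails to be of restricted weak type $(q,q)$ precisely when $\sup_j l_j = \infty$ (at $q=1$) or $\sup_j \varepsilon_j l_j^{1/q} = \infty$ (at $q \in (1,\infty)$). The key monotonicity is that $q \mapsto l_j^{1/q}$ is non-increasing on $[1,\infty)$ whenever $l_j \geq 1$, so the set of exponents where the sup is finite is automatically an up-set; what remains is to locate its threshold at $q_0$ and to decide whether this threshold is included.

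For part (1) with $q_0 \in (1,\infty)$, I would choose $l_j = j$ and $\varepsilon_j = \min\{\tfrac{1}{2}, j^{-1/q_0}\}$. Then $\sup_j \varepsilon_j l_j^{1/q_0}\leq 1 < \infty$, so Theorem~\ref{thm:r_0} yields restricted weak type $(q_0,q_0)$, which by the monotonicity just described extends to every $q \in [q_0,\infty]$. On the other hand, for $q \in (1,q_0)$ we have $\varepsilon_j l_j^{1/q} \sim j^{1/q - 1/q_0} \to \infty$, and for $q=1$ the sequence $l_j = j$ is unbounded, so both criteria of the theorem fail as required. The case $q_0 = 1$ is the trivial $\mathcal{R}' = \mathcal{R}_0$ noted in the statement.

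For part (2) with $q_0 \in (1,\infty)$, I would take $l_j = 2^j$ and $\varepsilon_j = \min\{\tfrac{1}{2}, j\cdot 2^{-j/q_0}\}$. Then $\varepsilon_j l_j^{1/q_0} = j \to \infty$, so the threshold fails to be attained; for $q > q_0$ one has $\varepsilon_j l_j^{1/q} = j\cdot 2^{j(1/q - 1/q_0)} \to 0$, giving boundedness, while for $q \in (1,q_0)$ the same expression blows up. At $q = 1$, $\sup_j l_j = \infty$ gives the required unboundedness. For the case $q_0 = 1$, the simple choice $l_j = 2^j$, $\varepsilon_j = 2^{-j}$ suffices: $\sup_j l_j = \infty$ handles $q=1$, while $\varepsilon_j l_j^{1/q} = 2^{j(1/q-1)} \to 0$ for every $q > 1$.

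The only non-arithmetic step is ensuring that hypothesis~(2) of Theorem~\ref{thm:r_0}, i.e. the pairwise disjointness of $E_j = \bigcup_{Q \in S_j} Q$, can be arranged simultaneously with the above choices of $(\varepsilon_j, l_j)$. Since an $(\varepsilon_j, l_j)$-configuration around $Q_j$ is built from elements of $\mathcal{R}$ of comparable scale and location to $Q_j$, the set $E_j$ sits in a controlled neighborhood of $Q_j$; consequently the disjointness can be secured by distributing the $Q_j$ across disjoint coordinate blocks of $\tom$, or equivalently by choosing them with sufficiently rapidly shrinking side-lengths placed at well-separated positions. This is the main concrete point beyond the bookkeeping, but it is a geometric rather than analytic obstacle: Theorem~\ref{thm:r_0} already encapsulates all the hard work, and the corollary is then a matter of parameter selection.
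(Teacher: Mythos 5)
Your proposal is correct and follows exactly the paper's approach: apply Theorem~\ref{thm:r_0} with calibrated sequences $(\varepsilon_j,l_j)$, secure the disjointness of the $E_j$ by placing each configuration inside its own dyadic cube, and use the monotonicity of $q\mapsto \varepsilon_j l_j^{1/q}$. The paper makes different specific choices ($\varepsilon_j=\tfrac1{j+1}$ with $l_j=\lfloor j^{q_0}\rfloor$ for part (1) and $l_j=\lfloor \log(j+2)\, j^{q_0}\rfloor$ for part (2)), but these are interchangeable with yours and lead to the same conclusion.
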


In view of the above results, one may ask 
whether the situation changes significantly if the normalized Haar measure is replaced by a different measure which is absolutely continuous with respect to it (actually this is a necessary condition if we want this new measure to satisfy some regularity properties, see \cite[Theorem~1.4]{Duo2013}). This motivates to consider weighted inequalities, the first question being:
\begin{itemize}
    \item What are the weighted boundedness properties of the maximal operator associated with the Rubio de Francia basis $\mathcal{R}$? \end{itemize}

The Muckenhoupt classes $A_p^{\mathcal{R}}(\tom)$ with $p \in [1, \infty)$ (see Subsection~\ref{subsec:maxw} for the definition) are the obvious classes of weights to study in this context. For them we again have been able to show a negative result in the whole range of $q$'s. Even more can be said, the same is true for $M^{\mathcal{R}'}$ instead of $M^{\mathcal{R}}$, where $\mathcal{R}' = \mathcal{R}_0 \cup S$ and $S$ is built with the aid of a suitable sequence of $(\varepsilon_j,l_j)$-configurations around sets $Q_j$. For example, any sequence satisfying $\sup_{j \in \mathbb{N}} l_j = \infty$ and $\varepsilon_j = \varepsilon$, where $\varepsilon$ is sufficiently small, is good for this purpose.

To quantify the restriction on $\varepsilon$, we announce that for any weight $w \in A_p^{\mathcal{R}}(\tom)$, $p \in (1, \infty)$, there exist $\delta \in (0, 1]$ and $C>0$ such that
\begin{equation}
\label{eq:condelta_new}
  \frac{w(E)}{w(Q)}\leq C\left(\frac{|E|}{|Q|}\right)^\delta
\end{equation}
holds for all $E\subset Q$ and $Q \in \mathcal{R}$ (see Theorem~\ref{teo:weights2}). Here, for a measurable set $A\subseteq \tom$, $w(A):=\int_A w(x)\,dx$.
Then, for $w$ satisfying \eqref{eq:condelta_new}, one can take any $\varepsilon \in (0, \frac{1}{2}]$ such that $C \varepsilon^\delta < 1$. Although it is not necessary, to make the argument clear the parameter $\varepsilon$ will be specified to be equal to $\frac{1}{N}$ for some large $N \in \mathbb{N}$.

\begin{theorem}\label{thm:unboundedweights_new}
 Let $w$ be a Muckenhoupt weight in $ A_p^{\mathcal{R}}(\tom)$ for some $p \in (1,\infty)$. Choose $N \in \mathbb{N}$ to be the smallest positive integer such that $C N^{-\delta} < 1$, where $C>0$ and $\delta \in (0, 1]$ are specified in \eqref{eq:condelta_new}. Let $\{Q_j\}_{j\in\mathbb{N}}$ be a sequence of dyadic cubes and $\{l_j\}_{j\in\mathbb N}$, $\{N_j\}_{j\in\mathbb N}$ be sequences of positive integers with $N_j \geq N$ for all $j\in\mathbb{N}$. Assume that $\mathcal{R}'=\mathcal{R}_0\cup S$, where $S$ is the union of a sequence $\{S_j\}_{j\in\mathbb{N}}$ with each $S_j$ being a $(\frac{1}{N_j},l_j)$-configuration around $Q_j$. If for some $q \in [1, \infty)$ we have 
 \[
 \sup_{j \in \mathbb{N}} \frac{\big( 1-CN_j^{-\delta} \big)^{N_j/q} l_j^{1/q}}{N_j} = \infty,
 \]
 then $M^{\mathcal{R}'}$ is not of restricted weak type $(q,q)$ with respect to $w$.
\end{theorem}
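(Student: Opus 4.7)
The plan is to argue by contradiction. Assume $M^{\mathcal{R}'}$ is of restricted weak type $(q,q)$ with respect to $w$ with some constant $K<\infty$. It suffices to exhibit, for each $j$ in the subsequence along which the hypothesized supremum diverges, a measurable set $E_j$ and a level $\lambda_j>0$ making the ratio
\[
R_j:=\frac{\lambda_j^q\,w\bigl(\{M^{\mathcal R'}\chi_{E_j}>\lambda_j\}\bigr)}{w(E_j)}
\]
tend to infinity, contradicting the uniform bound $K$.

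For each $j$, set $\lambda_j:=1/(2N_j)$ and write $S_j=\{Q_j^{(1)},\dots,Q_j^{(l_j)}\}$, $A_k:=Q_j^{(k)}\cap Q_j$. The ``around $Q_j$'' geometry built into Definition~\ref{def:confaround} restricts each $A_k$ to a family of at most $N_j$ distinguished sub-pieces of $Q_j$ of Lebesgue measure $|Q_j|/N_j$, while the tails $Q_j^{(k)}\setminus Q_j$ are pairwise disjoint by part~(2) of Definition~\ref{epsn}. Pigeonhole then selects a sub-piece $E_j$ realized as $A_k$ for at least $l_j/N_j$ indices; let $K_j$ denote this index set. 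For every $k\in K_j$ the average of $\chi_{E_j}$ over $Q_j^{(k)}$ equals $|E_j|/|Q_j^{(k)}|=1/N_j>\lambda_j$, so $Q_j^{(k)}\subset\{M^{\mathcal R'}\chi_{E_j}>\lambda_j\}$, and the disjointness of the tails gives
\[
w\bigl(\{M^{\mathcal R'}\chi_{E_j}>\lambda_j\}\bigr)\;\ge\;\sum_{k\in K_j}w(Q_j^{(k)}\setminus Q_j).
\]

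The weighted input is the reverse-doubling inequality~\eqref{eq:condelta_new} applied with $F=E_j$ and $Q=Q_j^{(k)}$: since $|E_j|/|Q_j^{(k)}|=1/N_j$ one gets $w(E_j)\le CN_j^{-\delta}w(Q_j^{(k)})$, whence
\[
w(Q_j^{(k)}\setminus Q_j)\;\ge\;(1-CN_j^{-\delta})w(Q_j^{(k)})\;\ge\;\frac{N_j^\delta}{C}(1-CN_j^{-\delta})\,w(E_j).
\]
Summing over $k\in K_j$ and multiplying by $\lambda_j^q=(2N_j)^{-q}$ yields
\[
R_j\;\ge\;\frac{(1-CN_j^{-\delta})\,l_j}{C\,2^q\,N_j^{q+1-\delta}}.
\]
A short computation shows that this lower bound is, up to a positive constant depending only on $C,\delta,q$, at least as large as $(1-CN_j^{-\delta})^{N_j}l_j/N_j^q$: indeed, the ratio of the two quantities equals $[C\,2^q\,(1-CN_j^{-\delta})^{N_j-1}N_j^{1-\delta}]^{-1}$, and the function $n\mapsto(1-Cn^{-\delta})^{n-1}n^{1-\delta}$ is bounded on $n\ge N$ (it tends to $e^{-C}$ as $n\to\infty$ when $\delta=1$ and to $0$ when $\delta<1$). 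Raising the hypothesis $\sup_j(1-CN_j^{-\delta})^{N_j/q}l_j^{1/q}/N_j=\infty$ to the $q$-th power then forces $R_j\to\infty$ along a subsequence, contradicting $R_j\le K$.

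The main obstacle is the pigeonhole step producing at least $l_j/N_j$ configuration cubes that share a common intersection $A_k$; this relies entirely on the ``around $Q_j$'' restriction of Definition~\ref{def:confaround}, which confines the $A_k$'s to a finite family of sub-pieces of $Q_j$. Once this combinatorial input is secured, the weighted level-set estimate collapses into one application of~\eqref{eq:condelta_new} combined with the disjointness of tails from Definition~\ref{epsn}(2), and the final comparison with $(1-CN_j^{-\delta})^{N_j}l_j/N_j^q$ is a purely elementary manipulation.
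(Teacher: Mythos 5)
Your proof has a genuine gap at the pigeonhole step, and the gap is fatal.

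In the $(\tfrac{1}{N_j},l_j)$-configuration \emph{around} $Q_j$ (Definition~\ref{def:confaround}, built from Example~\ref{ex:epsn}), each cube $Q_j^{(k)}=T_j^{(k)}+Q_j$ is translated by a fixed fraction $\tfrac{N_j-1}{N_j}$ of $|\pi_k(Q_j)|$ in the \emph{$k$-th coordinate direction}. Consequently $A_k:=Q_j^{(k)}\cap Q_j$ is the slab of $Q_j$ occupying the top $\tfrac{1}{N_j}$-fraction in coordinate $k$. These slabs are pairwise distinct; they do not live in a family of ``at most $N_j$ distinguished sub-pieces,'' so no pigeonhole produces an $E_j$ equal to $A_k$ for $\ge l_j/N_j$ values of $k$. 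Worse, if you fix $E_j=A_{k_0}$ for a single $k_0$, then for every $k\ne k_0$ the intersection $E_j\cap Q_j^{(k)}$ lies in two orthogonal $\tfrac{1}{N_j}$-slabs at once, so $|E_j\cap Q_j^{(k)}|/|Q_j^{(k)}|=N_j^{-2}<\lambda_j=(2N_j)^{-1}$ once $N_j\ge 2$. Hence only $Q_j^{(k_0)}$ lands in the level set, and the summation over an index set of size $\ge l_j/N_j$ never materializes. Your later algebra (the single application of \eqref{eq:condelta_new} and the comparison of $\tfrac{(1-CN_j^{-\delta})l_j}{C\,2^q N_j^{q+1-\delta}}$ with $\tfrac{(1-CN_j^{-\delta})^{N_j}l_j}{N_j^q}$) is arithmetically fine, but it rests entirely on this unavailable combinatorial input.

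The paper's proof resolves exactly the difficulty you were trying to sidestep. It takes $f_j=\chi_{Q_j}$, so that $|f_j|_{Q_j^{(k)}}=\tfrac{1}{N_j}$ for \emph{every} $k$, at no combinatorial cost; the price is that one must lower-bound $w(Q_j^{(k)}\setminus Q_j)$ against $w(Q_j)$ rather than against the weight of a small sub-piece. This is where the single application of \eqref{eq:condelta_new} is not enough: one applies it $N_j$ times along the telescoping chain $Q_{j,n}^{(k)}=\tfrac{n}{N_j-1}T_j^{(k)}+Q_j$, $n=0,\dots,N_j-1$, each step losing a factor $1-CN_j^{-\delta}$, to obtain $w(Q_j^{(k)}\setminus Q_j)/w(Q_j)\ge (1-CN_j^{-\delta})^{N_j}$. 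That chained estimate is the true engine of the theorem, and it is precisely what your proposal omits.
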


\noindent As an immediate corollary we obtain the following.

\begin{corollary}\label{corolweights_new}
Let $w$ be a Muckenhoupt weight in $ A_p^{\mathcal{R}}(\tom)$ for some $p \in (1,\infty)$. Then $M^\mathcal{R}$ is not of weighted restricted weak type $(q,q)$ with respect to $w$ for all $q \in [1, \infty)$.
 \end{corollary}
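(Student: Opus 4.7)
The plan is to derive Corollary \ref{corolweights_new} directly from Theorem \ref{thm:unboundedweights_new}, combined with the elementary pointwise domination $M^{\mathcal{R}'} f \le M^{\mathcal{R}} f$ that holds whenever $\mathcal{R}' \subset \mathcal{R}$. The strategy is to build, for the fixed $w$, a single intermediate basis $\mathcal{R}'$ which already witnesses the failure of the restricted weak type $(q,q)$ inequality with respect to $w$ for every $q \in [1, \infty)$ simultaneously; the negative result for $M^\mathcal{R}$ will then follow by comparison.

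First I would take $\delta \in (0,1]$, $C > 0$, and $N \in \mathbb{N}$ as furnished by Theorem \ref{thm:unboundedweights_new}, replacing $N$ by a larger integer if necessary so that $1/N \le 1/2$ (the inequality $CN^{-\delta} < 1$ is preserved under enlarging $N$). Next I would pick a sequence $\{Q_j\}_{j \in \mathbb{N}}$ of pairwise disjoint dyadic cubes, each contained in its own member of a pairwise disjoint family of dyadic ancestors, and for each $j$ construct a $(\tfrac{1}{N}, j)$-configuration $S_j$ around $Q_j$ entirely supported in the corresponding ancestor; in particular the sets $E_j := \bigcup_{Q \in S_j} Q$ are pairwise disjoint. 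Setting $N_j := N$, $l_j := j$, and $\mathcal{R}' := \mathcal{R}_0 \cup \bigcup_{j \in \mathbb{N}} S_j$, for each fixed $q \in [1, \infty)$ the factor $(1 - CN^{-\delta})^{N/q}/N$ is a strictly positive constant independent of $j$, so
\[
\sup_{j \in \mathbb{N}} \frac{(1 - CN^{-\delta})^{N/q} l_j^{1/q}}{N} = \frac{(1 - CN^{-\delta})^{N/q}}{N}\sup_{j \in \mathbb{N}} j^{1/q} = \infty.
\]
Theorem \ref{thm:unboundedweights_new} then yields that $M^{\mathcal{R}'}$ fails the restricted weak type $(q,q)$ inequality with respect to $w$.

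To close the argument, I would transfer this failure to $M^\mathcal{R}$ through the pointwise bound $M^{\mathcal{R}'} \chi_E \le M^{\mathcal{R}} \chi_E$, which forces $\|M^\mathcal{R} \chi_E\|_{L^{q,\infty}(w)} \ge \|M^{\mathcal{R}'} \chi_E\|_{L^{q,\infty}(w)}$ for every measurable $E \subset \tom$; any hypothetical uniform control of the left-hand side by a constant multiple of $w(E)^{1/q}$ would contradict the previous step. Since $q \in [1, \infty)$ is arbitrary, the corollary is established.

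The serious work lies entirely in Theorem \ref{thm:unboundedweights_new}; once it is in hand, the only delicate point in the deduction is a geometric bookkeeping issue, namely ensuring the supports $E_j$ of the configurations remain pairwise disjoint (as required implicitly by Theorem \ref{thm:unboundedweights_new}, analogously to condition (2) in Theorem \ref{thm:r_0}). Placing each $Q_j$ inside its own member of a pairwise disjoint family of dyadic ancestors and localizing $S_j$ inside that ancestor dispatches this obstacle. The remaining pieces — the supremum computation and the pointwise comparison — are routine.
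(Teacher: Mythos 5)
Your proof is correct and follows essentially the same route as the paper: choose $N_j \equiv N$ and $l_j = j$, observe that the relevant quantity tends to infinity for every fixed $q$, and invoke Theorem~\ref{thm:unboundedweights_new} together with the pointwise domination $M^{\mathcal{R}'} \le M^{\mathcal{R}}$ (which the paper leaves implicit). The only superfluous care is your arrangement for the sets $E_j$ to be pairwise disjoint: unlike Theorem~\ref{thm:r_0}, the hypotheses of Theorem~\ref{thm:unboundedweights_new} impose no such condition, and its proof only uses disjointness of the sets $Q_j^{(k)}\setminus Q_j$ \emph{within} a single configuration $S_j$, which is guaranteed by Definition~\ref{epsn}.
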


\noindent Using the above corollary we get that the same is true also for all reverse H\"older classes $\mathrm{RH}_r^{\mathcal{R}}(\tom)$ with $r \in (1,\infty)$ (see Subsection~\ref{subsec:maxw} for the definition).
\begin{corollary}\label{corolweights_new2}
Let $w$ be a reverse H\"older weight in $ \mathrm{RH}_r^{\mathcal{R}}(\tom)$ for some $r \in (1,\infty)$. Then $M^\mathcal{R}$ is not of weighted restricted weak type $(q,q)$ with respect to $w$ for all $q \in [1, \infty)$.
 \end{corollary}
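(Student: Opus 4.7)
The plan is to obtain Corollary \ref{corolweights_new2} as a direct consequence of Corollary \ref{corolweights_new} via the identity
\[
\bigcup_{p \in (1, \infty)}A_p^\mathcal{R}(\tom) = \bigcup_{r \in (1, \infty)} \mathrm{RH}_r^\mathcal{R}(\tom)
\]
announced in the abstract. Given $w \in \mathrm{RH}_r^{\mathcal{R}}(\tom)$ for some $r \in (1,\infty)$, this identity produces some $p \in (1,\infty)$ with $w \in A_p^{\mathcal{R}}(\tom)$; applying Corollary \ref{corolweights_new} to this $w$ and $p$ immediately gives that $M^\mathcal{R}$ is not of weighted restricted weak type $(q,q)$ with respect to $w$ for any $q \in [1,\infty)$, as claimed.

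The only genuine work is therefore to verify the inclusion $\mathrm{RH}_r^{\mathcal{R}}(\tom) \subseteq \bigcup_{p} A_p^{\mathcal{R}}(\tom)$ (the reverse inclusion is not needed for the present corollary). I would mirror the classical Euclidean argument. First, a single application of H\"older's inequality with exponent $r$ on pairs $E \subset Q \in \mathcal{R}$ shows that any reverse H\"older weight satisfies
\[
\frac{w(E)}{w(Q)} \leq C \Big(\frac{|E|}{|Q|}\Big)^{1/r'},
\]
which is precisely of the form \eqref{eq:condelta_new} with $\delta = 1/r'$. Second, I would leverage this $A_\infty$-type estimate to extract a Muckenhoupt exponent $p$ by expanding the $p'$-average of $w^{-1/(p-1)}$ over $Q \in \mathcal{R}$ via a layer-cake formula and controlling each level set $\{w \leq \lambda\} \cap Q$ by the above inequality; taking $p$ large enough makes the resulting integral converge uniformly in $Q$, yielding $w \in A_p^{\mathcal{R}}(\tom)$.

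The principal obstacle is that the standard Euclidean route from $A_\infty$ to $A_p$ typically invokes a Calder\'on--Zygmund-type stopping-time decomposition, which on $\tom$ is available only for the dyadic subbasis $\mathcal{R}_0$ via \cite{Fernandez-Roncal2020} and not for the full basis $\mathcal{R}$, whose maximal operator is already pathological by Corollary \ref{corollary}. The layer-cake approach sketched above sidesteps this difficulty by handling one $Q \in \mathcal{R}$ at a time and relying only on the pointwise estimate \eqref{eq:condelta_new}, which I therefore expect transfers faithfully to the basis $\mathcal{R}$ without requiring any global covering machinery.
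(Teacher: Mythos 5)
Your top-level reduction mirrors the paper's: invoke Corollary~\ref{corolweights_new} together with the identity $\bigcup_{r}\mathrm{RH}_r^{\mathcal{R}}(\tom)=\bigcup_{p}A_p^{\mathcal{R}}(\tom)$, which is Corollary~\ref{cor:RH=Ap}; the paper's proof is literally a one-line appeal to those two corollaries. The trouble is with your attempted re-proof of the inclusion $\mathrm{RH}_r^{\mathcal{R}}(\tom)\subseteq\bigcup_p A_p^{\mathcal{R}}(\tom)$.

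Your first step is fine: H\"older applied to the reverse H\"older inequality gives, for $E\subset Q\in\mathcal{R}$,
\[
\frac{w(E)}{w(Q)}\leq C\Big(\frac{|E|}{|Q|}\Big)^{1/r'},
\]
i.e.\ \eqref{eq:condelta_new} with $\delta=1/r'$ (the easy direction of Theorem~\ref{teo:weights2}(2)). But your second step applies this to $E_\lambda:=\{w<\lambda\}\cap Q$ in order to control the layer-cake expansion of $\frac{1}{|Q|}\int_Q w^{-1/(p-1)}$, and that cannot work. The inequality above bounds a $w$-measure by a Lebesgue measure; combined with the trivial $w(E_\lambda)\le\lambda|E_\lambda|$ it gives two upper bounds for $w(E_\lambda)$ and no control on $|E_\lambda|$ whatsoever. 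What the layer-cake integral actually needs is the reverse control --- that $|E_\lambda|/|Q|$ decays like a power of $\lambda$ --- and that is precisely the Muckenhoupt measure condition of Theorem~\ref{teo:weights2}(1), namely $|E|/|Q|\le C\big(w(E)/w(Q)\big)^{\rho}$. Passing from the reverse H\"older measure condition to the Muckenhoupt one is not an algebraic rearrangement; for a general basis the implication is simply false. Take the single-set basis $\mathcal{B}=\{[0,1]\}$ and $w(x)=e^{-1/x}$ on $[0,1]$. Then $w$ is bounded, so $w\in\mathrm{RH}_r^{\mathcal{B}}$ for every $r$ and \eqref{eq:condelta_new} holds with $\delta=1$, yet $w^{-1/(p-1)}=e^{1/((p-1)x)}\notin L^1([0,1])$ for every $p$, so $w\notin A_p^{\mathcal{B}}$ for any $p$. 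Indeed $|\{w<\lambda\}\cap[0,1]|=1/\log(1/\lambda)$ decays only logarithmically, and the layer-cake diverges for every exponent. So \eqref{eq:condelta_new} by itself cannot close your argument.

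Your stated reason for avoiding the Calder\'on--Zygmund route --- that the decomposition is unavailable for $\mathcal{R}$ --- is also inaccurate, and in fact that machinery is exactly what Corollary~\ref{cor:RH=Ap} invokes. As the paper notes in the paragraph preceding it, given any $Q\in\mathcal{R}$ (a translate of a dyadic cube), one translates the dyadic structure of $\mathcal{R}_0$ to obtain a dyadic decomposition $\mathcal{R}_0(Q)$ of $Q$, and a Calder\'on--Zygmund decomposition localized to $Q$ then follows from \cite[Theorem~9]{Fernandez-Roncal2020}. The unboundedness of $M^{\mathcal{R}}$ is irrelevant here; the localized decomposition only uses the nested dyadic children of a single fixed $Q$. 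So the correct route is the one you wished to sidestep: quote Corollary~\ref{cor:RH=Ap}, whose justification rests on the localized Calder\'on--Zygmund decomposition, and combine it with Corollary~\ref{corolweights_new}.
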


 In view of the negative results stated above one might be tempted to look for scenarios with more positive outcomes. The positive unweighted results in Corollary~\ref{threshold} bring the attention to the intermediate bases $\mathcal{R}'$ used there. Thus, a natural next question is whether $A_p^{\mathcal{R}'}(\tom)$ is the correct class to characterize the weighted boundedness of $M^{\mathcal{R}'}$ on $L^p(w)$ for $p \in (q_0, \infty)$, at least for some of the bases $\mathcal{R}'$. However, as we shall explain in detail at the end of Section~\ref{weighted_unb}, the answer in this case looks nontrivial.

As pointed out by Rubio de Francia \cite{RubioConv}, the interest for the study of Fourier analysis on $\tom$ has several motivations. First, it can be seen as a natural extension of $n$-dimensional Fourier analysis, but with estimates uniform in the dimension. On the other hand, $\{e^{2\pi ix_k}\}_{k\in \mathbb{N}}$ is a system of independent and identically distributed random variables in the complex unit circle (i.e., a complexified version of Rademacher functions), whose natural completion on $L^2$ is the trigonometric system on $\tom$. Hence, Fourier series of infinite variables turn out to be the complex analogue of Walsh series.
Fourier series on $\tom$ have important connections with the classical theory of Dirichlet series \cite{Bohr, Toeplitz}, leading to profound problems in analytic number theory and functional analysis, see the remarkable work \cite{HLS} (also, e.g., \cite{  AOS, AOS2, Bayart, BDFMS,HS, PP}). There is also a considerable interest on the infinite torus from the point of view of potential theory \cite{Bendikov, BCS, BS}.

Finally, we would like to point out the relation of the analysis in infinite dimensions with Malliavin calculus, which is one of the main tools in modern stochastic analysis. This theory motivates the analysis on Wiener space, an infinite-dimensional generalization of the usual analytical concepts on $\mathbb{R}^n$, such as Fourier analysis and Sobolev spaces \cite{Hairer}. 

\noindent \textbf{Structure of the paper.} We start presenting preliminaries and definitions about the infinite-dimensional torus, maximal operators, weights, and bases in Section~\ref{sec:torus-maximal-weights}. In Section \ref{sec:noweight} we provide the proofs of Theorems \ref{unweighted_unboundedness} and \ref{thm:r_0} concerning the unweighted setting, and remarks on the implication on differentiation properties of bases. Section \ref{sec:pesos} is devoted to the weighted setting: we show several examples of weights and constructions via periodization, and we proof Theorem \ref{thm:unboundedweights_new}. Finally, in Section \ref{sec:reverse} we gather open questions that arise naturally from the present work. 

\section{Preliminaries}\label{sec:torus-maximal-weights}

We introduce the basic definitions for the infinite-dimensional torus as well as the fundamentals for the study of weighted and unweighted inequalities for maximal operators defined over bases of subsets of the torus.

\subsection{Infinite-dimensional torus \texorpdfstring{$\tom$}{Tom}}
\label{subsec:tom}

  For a given $n\in\mathbb{N}$ and a point $x\in \tom=\mathbb{T}^n\times\mathbb{T}^{n,\omega}$, where  $\mathbb{T}^{n,\omega}$ is just a copy of the infinite-dimensional torus $\tom$, we will denote $x_{(n}:=\pi_{\mathbb{T}^n}(x)=(x_1,x_2,\ldots,x_n)$ and $x^{(n}:=\pi_{\mathbb{T}^{n,\omega}}(x)=(x_{n+1},x_{n+2},\ldots)$, the canonical projections of $x$ over $\mathbb{T}^n$ and $\mathbb{T}^{n,\omega}$, respectively. With a slight abuse of notation, we will use $\pi_k$ to refer to the projection over the $k$-th component. We will always denote by $0$ the identity of the group and it will be clear from the context whether we are talking about the identity of the finite-dimensional torus or not. 
  
  We will consider in $\tom$ the product topology, which gives $\tom$ structure of metrizable space. Indeed, the distance function 
\begin{equation}
\label{Saks}
d(x,y):=\sum_{n=1}^\infty \frac{|x_n-y_n|}{2^n},\qquad x,y\in\tom,
\end{equation}
defines a metric in $\tom$ \cite[p. 157]{saks} which is compatible with its topology. We write $\delta(S):=\sup_{x,y\in S}d(x,y)$ for the diameter of the set $S\subset \tom$. 

The $\sigma$-algebra of Borel subsets of $\tom$ is generated by the intervals $I=\prod_{j=1}^\infty I_j$, where $I_j$ is an interval in $\mathbb{T}$ for every $j\in\mathbb{N}$ and there is some $N>0$ such that $I_j=\mathbb{T}$ for every $j> N$. The normalized Haar measure $d x$ on $\tom$ coincides with the product of countable infinite many copies of Lebesgue measure on $\mathbb{T}$ and for a Borel set $E\subset \tom$ we will denote its measure by
\[
|E|:=\int_{\tom}\chi_E(x)\,dx,
\] 
where $\chi_E$ is the characteristic function of the set $E$.
 It was shown in \cite[Chapter~2.3]{TesisEmilio} that $\tom$ with the Haar measure $dx$ and the metric $d$ is not a space of homogeneous type in the sense of Coifman and Weiss \cite{cowe}, and we do not know whether we can equip the space with a metric to make it of homogeneous type (see \cite[Note 2.34]{TesisEmilio}).

In \cite{JLR78}, Jos\'e L. Rubio de Francia showed a result on differentiation of integrals in the context of a locally compact group $G$, that contained a decomposition of Calder\'on--Zygmund type under certain conditions. Later, the third and fourth authors proved in \cite[Theorem~1.2]{Paternostro2019} that a class of $A_\infty$ weights in a locally compact abelian group $G$ admitting covering families satisfies a sharp weak reverse H\"older inequality. The covering families are then understood as a class of base sets which play the same role as cubes in the Euclidean setting, and they are a particular case of what is called a $D'$-sequence, see \cite[Definition~44.10]{Hewitt1970}. By using these sets, the authors were able to prove a Calder\'on--Zygmund type decomposition.  Then they used the latter to prove the sharp reverse H\"older inequality for $A_\infty$ weights and, as an application, they obtained an abstract counterpart of the celebrated Buckley's sharp estimate for the weighted boundedness of the Hardy--Littlewood maximal operator. 

As observed in \cite[p.~194]{Edwards1965}, there are no $D'$-sequences in $\tom$, the infinite-dimensional torus. {Hence, we lack the existence of the base sets considered in \cite{Paternostro2019} and in particular, we lack the automatic validity of the Lebesgue differentiation theorem. 
Even in case we are able to find a different base of sets to work with, we may still lack the corresponding version of Lebesgue differentiation theorem. The validity of such a differentiation property will depend on the particular choice of the basis.}

\subsection{Maximal operator and weights}
\label{subsec:maxw}

A basis $\mathcal{B}$ will be a collection of Borel measurable subsets of the infinite-dimensional torus with finite positive measure. Associated with such a basis, there is a maximal function $M^\mathcal{B}$ which is defined as
\begin{equation}\label{maximal}
M^\mathcal{B}f(x):=\sup_{\mathcal{B}\ni Q\ni x} |f|_Q,   
\end{equation}
where $f_E:=\frac{1}{|E|}\int_Ef\,dx$ is the average value of $f$ on $E$.

Standard harmonic analysis (e.g., harmonic analysis in the Euclidean setting) is strongly based on the nice behavior of the Hardy--Littlewood maximal operator, which  is the maximal operator associated with the basis of intervals. This is because the boundedness properties of a large class of operators are dominated by those of the maximal operator. 

For $q \in [1,\infty)$ let $L^q(\tom,dw)$ be the space of measurable functions $f$ such that
\[
\|f\|_{L^q(\tom,dw)}:=\Big(\int_{\tom}|f(x)|^qw(x)\,dx\Big)^{1/q}<\infty.
\]
Here $w$ is any weight in $\tom$, i.e., any nonnegative integrable function in the infinite-dimensional torus. Similarly, we define $L^\infty(\tom,dw)$, referring to the quantity
\[
\|f\|_{L^\infty(\tom,dw)}:= \inf \{C>0:\, w(\{x \in \tom: |f(x)|>C\})=0\}.
\]
If $w \equiv 1$, then we just omit the measure in the notation. 

{
As usual, for a given sublinear operator $T$ we say that:
\begin{enumerate}
    \item it is of weighted strong type $(q,q)$, $q \in [1, \infty]$, with respect to $w$ (or it is bounded on $L^q(\tom,dw)$) if there exists $C>0$ such that 
    \begin{equation}\label{eq:strong_maximal}
\| Tf\|_{L^q(\tom,dw)}\leq C \| f\|_{L^q(\tom,dw)},\quad f\in L^q(\tom,dw),
 \end{equation}
 \item it is of weak type $(q,q)$, $q \in [1, \infty)$, with respect to $w$ (or it is  bounded from $L^q(\tom,dw)$ to $L^{q, \infty}(\tom,dw)$) if  there exists $C>0$ such that 
 \begin{equation}\label{eq:weak_maximal}
\| f\|_{L^{q,\infty}(\tom,dw)} \leq C \| f\|_{L^q(\tom,dw)},\quad f\in L^q(\tom,dw),
\end{equation}
where
\[
\| f\|_{L^{q,\infty}(\tom,dw)} := \sup_{t>0} t\, w(\{ x\in \tom: |f(x)|>t \})^{1/q},
\]
\item and it is of restricted weak type $(q,q)$, $q \in [1, \infty)$, with respect to $w$ if \eqref{eq:weak_maximal} holds for characteristic functions of measurable sets.
\end{enumerate}
 It follows directly from the above definitions that, for fixed $T$ and $q \in [1, \infty)$, being of strong type $(q,q)$ implies being of weak type $(q,q)$, which in turn implies being of restricted weak type $(q,q)$. For $q=1$ the latter implication can be reversed. }

Two important classes of weights in harmonic analysis are the classes of Muckenhoupt and reverse H\"older weights. These are defined relying on a particular basis $\mathcal{B}$, and they are closely connected with the corresponding maximal operator $M^\mathcal{B}$, see \cite{Duoandikoetxea2016}.

\begin{definition}\label{def:weights1} Let $w\in L^1(\tom)$ be a weight and consider a basis $\mathcal{B}$ in $\tom$.
\begin{enumerate}
\item Let $1<p<\infty$. We say that $w$ is in $A_{p}^\mathcal{B}(\tom)$ if
\begin{equation}\label{Ap}
[w]_{A_{p}^\mathcal{B}(\tom)}:= \sup_{Q\in\mathcal{B}}\left(\frac{1}{|Q|}\int_Q w(x)dx\right)\left(\frac{1}{|Q|}\int_Q w(x)^{1-p'}dx\right)^{p-1}<\infty,
\end{equation}
where $p'$ is the dual exponent of $p$, defined by $1/p+1/p'=1$. Weights satisfying this condition for some $p \in (1, \infty)$ are called Muckenhoupt weights with respect to $\mathcal{B}$.
\item Let $r \in (1, \infty)$. We say that $w\in \mathrm{RH}_r^\mathcal{B}(\tom)$ if there is some finite constant $C>0$ such that the reverse H\"older inequality
\begin{equation}\label{RHr}
\left(\frac{1}{|Q|}\int_Q w(x)^r\,dx\right)^{1/r}\leq C \frac{1}{|Q|}\int_Q w(x) \,dx
\end{equation}
is satisfied for every base set $Q\in \mathcal{B}$. The smallest constant in the above inequality will be denoted by $[w]_{\mathrm{RH}_r^\mathcal{B}(\tom)}$. Weights satisfying this condition for some $r>1$ are called reverse H\"older weights with respect to $\mathcal{B}$.
\end{enumerate}
 \end{definition}

\begin{remark}
These definitions are equivalent in the Euclidean setting if the basis is the family of all cubes in $\mathbb{R}^n$ and they all define what is called the class of $A_\infty(\mathbb{R}^n)$ weights (in other words, $\bigcup_{p \in (1, \infty)}A_p(\mathbb{R}^n) = \bigcup_{r \in (1, \infty)} \mathrm{RH}_r(\mathbb{R}^n) = A_\infty(\mathbb{R}^n)$).  
\end{remark}

For $p=1$, we have the class of $A_1^\mathcal{B}(\tom)$ weights, which we define in the usual way, by the uniform boundedness of the ratio between the maximal function and the original function, i.e., a weight $w\in L^1(\tom)$ is an $A_1^\mathcal{B}(\tom)$ weight if 
 \begin{equation}
 \label{eq:A1}
 [w]_{A_1^\mathcal{B}(\tom)}:= \Big\| \frac{M^\mathcal{B}w}{w
} \Big\|_{L^\infty(\tom)}<\infty.
 \end{equation}

It turns out that in the general case there is no {\it a priori} relation between the classes of weights we have defined, see Section~\ref{sec:reverse}.
Also, in general, nothing guarantees that the maximal operator associated with a given basis $\mathcal{B}$ is bounded neither in the unweighted case nor in the weighted case. The results in this paper, as announced in the beginning, are developed in this direction.  

The starting point for the topics studied in the present paper can be found in the works by Jessen \cite{Jessen1,Jessen2}, where it is proved that the apparently natural basis $\mathcal{J}$ of all the intervals of the infinite-dimensional torus does not differentiate $L^1(\tom)$ functions and therefore the associated maximal operator cannot be of weak type $(1,1)$. This shows that some difficulties must arise when one tries to do harmonic analysis in the infinite-dimensional torus.

\subsection{Rubio de Francia bases \texorpdfstring{$\mathcal{R}_0$}{R0} and \texorpdfstring{$\mathcal{R}$}{R}}\label{sub:bases}
 
In this subsection we introduce the precise definitions of the bases $\mathcal{R}_0$ and $\mathcal{R}$ on which we base our study. The \textit{Rubio de Francia basis} $\mathcal{R}$, originally devised by Jos\'e Luis Rubio de Francia, was introduced in \cite[Section~2.2]{Fernandez-Roncal2020}. We will call their elements cubes, since they  will play the same role as the cubes in the Euclidean setting. Although the maximal function related to this basis $\mathcal{R}$, as recently showed by the first author \cite{Kosz2020}, is not of weak type $(1,1)$,  we can find inside this class $\mathcal{R}$ a subclass $\mathcal{R}_0$ (the \textit{restricted Rubio de Francia basis}) of base sets which play the role of dyadic cubes, and so we will give the name of dyadic cubes to its elements. More precisely,  we will consider the family of dyadic intervals in $\mathbb{T}^\omega$ introduced by Rubio de Francia and defined in \cite{Fernandez-Roncal2020}, from which we will borrow the notation (see also the Ph.D. dissertation \cite{TesisEmilio}). 

  For every $k\in \mathbb{N}$, consider the set $R_k:= \{0,1/k,\ldots, (k-1)/k\}$ of all $k$-th roots of the unity in $\mathbb{T}$. Take the subgroups $H_0=\{0\}$ and $H_1:=R_2\times \{0^{(1}\}$ and, for every $n\geq 1$, 
 \begin{equation}
 \label{eq:Hs}
 H_{n^2+j} := \begin{cases} 
 R_{2^n}^n\times R_{2^j}\times\{0^{(n+1}\},& \text{if }1\leq j\leq n,\\
 R_{2^{n+1}}^{j-n}\times R_{2^n}^{2n+1-j}\times\{0^{(n+1}\},& \text{if }n+1\leq j\leq 2n+1.
 \end{cases}
 \end{equation}
 This defines an increasing sequence $H_1\subset H_2\subset \cdots \subset H_m\subset\cdots$ of subgroups of $\mathbb{T}^\omega$ satisfying $[H_{m+1}:H_m]=2$ for every $m$ and $\mathbb{T}^\omega=\overline{\bigcup_{m\in \mathbb{N}}H_m}$, see \cite{Fernandez-Roncal2020} and \cite[p.~41]{TesisEmilio}. Actually, after $H_1$ we define, for each $n\ge 1$,
 $$
 H_{n^2+j}=\widetilde{H}_{n^2+j}\times\{0^{(n+1}\} \quad (1\le j\le 2n+1),
$$
 and
 $$
 \widetilde{H}_{n^2+j}
:=\begin{cases} 
 R_{2^n}^n\times R_{2^j},& \text{if }1\leq j\leq n,\\
 R_{2^{n+1}}^{j-n}\times R_{2^n}^{2n+1-j},& \text{if }n+1\leq j\leq 2n+1.
 \end{cases}
 $$
 
 We can then define a decreasing sequence $\{V_m\}_{m\in \mathbb{N}}$ of open sets in $\mathbb{T}^\omega$ as follows. Define the open sets $V_0=\tom$ and $V_1:=\left(0,\frac{1}{2}\right)\times \mathbb{T}^{1,\omega}$ and, for every $n\geq 1$,
 \begin{equation}
 \label{eq:fundamental}
 V_{n^2 +j}:= \begin{cases} 
\left(0,\frac{1}{2^n}\right)^n\times \left(0,\frac{1}{2^j}\right)\times \mathbb{T}^{n+1,\omega},&\text{if }1\leq j\leq n,\\
\left(0,\frac{1}{2^{n+1}}\right)^{j-n}\times \left(0,\frac{1}{2^n}\right)^{2n+1-j}\times \mathbb{T}^{n+1,\omega},&\text{if }n+1\leq j\leq 2n+1.
 \end{cases}
 \end{equation}
 Note that for a given $m\in \mathbb{N}$ there are $n\in \mathbb{N} \cup \{0\}$ and $j\in\{1,\ldots,2n+1\}$ such that $m=n^2+j$. The set $V_{m}$ has measure $2^{-m}=2^{-(n^2+j)}$ and, for a given $x\in V_m$, the   last coordinates $x^{(n+1}$ of $x$ vary freely in $\mathbb{T}^{n+1,\omega}$. Given any positive integer $m$ written in this way, we will denote $\ell(m)=n+1$, the number of \textit{nonfree components} of the    set $V_m$, defining also $\ell(0)=0$. We can also write, for each $n\ge 1$,
\begin{equation}
\label{uvesubn}
V_{n^2+j}=\widetilde{V}_{n^2+j}\times\mathbb{T}^{n+1,\omega}\quad (1\le j\le 2n+1).
\end{equation}
where $\widetilde{V}_{n^2+j}$ is the nonfree part of $V_{n^2+j}$, that is 
\begin{equation*}
\widetilde{V}_{n^2+j}:= \begin{cases} 
\left(0,\frac{1}{2^n}\right)^n\times \left(0,\frac{1}{2^j}\right),&\text{if }1\leq j\leq n,\\
\left(0,\frac{1}{2^{n+1}}\right)^{j-n}\times \left(0,\frac{1}{2^n}\right)^{2n+1-j},&\text{if }n+1\leq j\leq 2n+1.  
 \end{cases}
\end{equation*}
See Figure~\ref{CZ_fig1}\footnote{Pictures taken from \cite{TesisEmilio}. We thank Emilio Fern\'andez for allowing us to include them here.} for the projections of the first nine sets $V_m$.

\begin{figure}[ht] 
\begin{center}
\includegraphics[scale=.65]{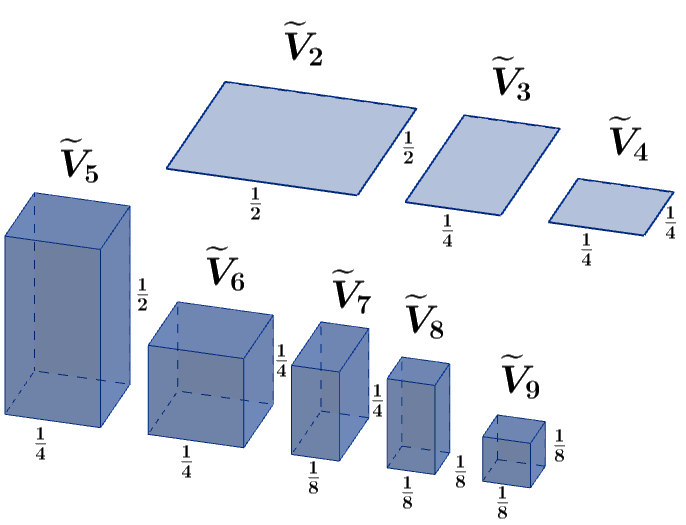}
\caption{First members of the sequence $\{\widetilde{V}_m\}_{m\in\mathbb N}$, e.g., $\widetilde{V}_7=(0,\frac 18)\times(0,\frac 14)^2$.
Two translations of $V_{m+1}$ by elements of $H_{m+1}$ cover (a.e.) $V_m$. }\label{CZ_fig1}
\end{center}
\end{figure}

Let $\mathcal{N}_m := \{t+V_m:t\in H_m\}$, $m\geq0$.  {Since for each $m\in\mathbb{N}$ the set $V_m$ is a fundamental domain for the quotient group $\mathbb{T}^\omega/H_m$,}  {it turns out that $\mathcal{N}_m$ is a finite family of disjoint open subsets of $\mathbb{T}^\omega$ whose union is $\mathbb{T}^\omega$ except for a zero measure set.} This sequence satisfies that, given $m\geq1$, for every $Q\in \mathcal{N}_m$ there is a unique  $Q'\in \mathcal{N}_{m-1}$  {with $Q\subset Q'$. Therefore,} for a.e. $x\in \mathbb{T}^\omega$ there is a unique decreasing sequence $\{Q^{(m)}(x)\}_{m\geq 0}$ with $Q^{(m)}(x)\in \mathcal{N}_m$ and $x\in Q^{(m)}(x)$ for every $m\geq 0$. Observe that $m=-\log_2|Q|$ for every $Q\in\mathcal{N}_m$. We will call $m$ the \textit{sizelevel} of the dyadic cubes in $\mathcal{N}_m$.

The restricted Rubio de Francia basis is the family
\[
\mathcal{R}_0:=\bigcup_{m\geq 0}\mathcal{N}_m.
\]
The basis $\mathcal{R}_0$ is a dyadic decomposition of $\tom$ and, as mentioned above, its elements will be called dyadic cubes. Similarly, we will say that the elements of $\bigcup_{m\in\mathbb{N}}H_m$ are dyadic translations. The   Rubio de Francia basis is the family
\[
\mathcal{R}:=\bigcup_{g\in \tom}\bigcup_{m\geq0}\bigcup_{Q\in\mathcal{N}_m}g+Q,
\]
and the elements of this family   will be called just cubes.   The concepts of nonfree components and sizelevel of a dyadic cube are translation invariant and they extend to nondyadic cubes.  {For a cube $Q\in\mathcal{R}$, we denote by $m_Q$ its sizelevel and, consequently, by $\ell(m_Q)$ the number of its nonfree components.}

\section{Unweighted setting}\label{sec:noweight}

As mentioned in Section~\ref{sec:Intro}, it was proved in \cite{Fernandez-Roncal2020} that the dyadic   maximal operator $ M^{\mathcal{R}_0}$ is of weak type $(1,1)$. Thus, by interpolating with the trivial $L^\infty$ bound, we get that it is of strong type $(q,q)$ for any $q \in (1, \infty)$. On the other hand, the first author showed that $M^{\mathcal{R}}$ is not of weak type $(1,1)$ (see \cite[Proposition~3.1]{Kosz2020}). 

A natural question in this regard is to determine the precise threshold basis $\mathcal{R}'$, with $\mathcal{R}_0\subset\mathcal{R}'\subset\mathcal{R}$, beyond which the behavior of the maximal operator $M^{\mathcal{R}'}$ changes from being of restricted weak type $(q,q)$ for $q \in [1, \infty)$ to not being it.  In other words, we are interested in determining  the smallest basis $\mathcal{R}_0\subset\mathcal{R}'\subset\mathcal{R}$ for which the associated maximal operator \[M^\mathcal{R'}f(x):=\sup_{\mathcal{R'}\ni Q\ni x}|f|_Q \]
is not of restricted weak type $(q,q)$ for a given $q \in [1, \infty)$.

We have not been able to give a fully satisfactory answer to this question. Instead, we have found a way to build bases  $\mathcal{R}_0\subset \mathcal{R}'\subset \mathcal{R}$ with prescribed boundedness properties for the related maximal operator $M^{\mathcal{R}'}$.   In particular we will deduce, as a corollary, the unboundedness of the maximal function $M^{\mathcal{R}}$ for all $q \in [1, \infty)$. 

Note that any basis   $\mathcal{R}_0\subset\mathcal{R}'\subset\mathcal{R}$ is of the form $\mathcal{R}'=\mathcal{R}_0\cup S$, where $S$ is some subset of $\mathcal{R}$, that is, a set of translated dyadic cubes.  In principle, although the collection of cubes of $S$ may not be countable, it happens that the set of sizes of cubes of $S$ has to be countable, by the structure of the Rubio de Francia basis itself. We remark that for the case $\sharp(\{|Q|\}_{Q\in S})<\infty$ (where $\sharp(\cdot)$ is counting measure on $\mathbb{R}$) we have boundedness, as the following lemma shows.

 \begin{lemma}\label{lematamanos}
 Let $S\subset \mathcal{R}$. If $\sharp(\{|Q|\}_{Q\in S})<\infty$, then $M^{\mathcal{R}_0\cup S}$ is of weak type $(q,q)$ for any $q \in [ 1, \infty)$.
 \end{lemma}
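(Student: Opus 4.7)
The plan is to reduce the claim to the already-established weak-type $(1,1)$ boundedness of $M^{\mathcal{R}_0}$ (and hence its strong type $(q,q)$ for $q\in(1,\infty)$ by interpolation with the trivial $L^\infty$ bound) by dominating $M^{\mathcal{R}_0\cup S}$ pointwise by a constant multiple of $M^{\mathcal{R}_0}$. Since
\[
M^{\mathcal{R}_0\cup S}f(x)\leq\max\bigl\{M^{\mathcal{R}_0}f(x),\, M^Sf(x)\bigr\},\qquad M^Sf(x):=\sup_{S\ni Q\ni x}|f|_Q,
\]
and $S$ involves only finitely many sizelevels $m_1<\cdots<m_k$, it suffices to establish, for each fixed $m$, a uniform bound $M^{S,m}f(x)\leq C_m\,M^{\mathcal{R}_0}f(x)$, where $M^{S,m}$ is the supremum over cubes of $S$ of sizelevel $m$; taking the maximum over $m_1,\dots,m_k$ then yields the desired pointwise domination.

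The core is a geometric covering lemma: for every sizelevel $m\geq 1$ there exists a sizelevel $m'=m'(m)<m$ such that every $Q=g+V_m\in\mathcal{R}$ is contained in some $Q'\in\mathcal{N}_{m'}\subset\mathcal{R}_0$, with $|Q'|/|Q|$ depending only on $m$. Writing $m=n^2+j$ with $1\leq j\leq 2n+1$, inspection of \eqref{eq:fundamental} shows that the nonfree components of $V_m$ lie in positions $1,\dots,n+1$, with those in positions $1,\dots,n-1$ having length at most $2^{-n}$. I would take $m':=\max\{0,(n-1)^2\}$. For $n\geq 2$, using the identity $(n-1)^2=(n-2)^2+(2n-3)$ with $j'=2n'+1$ and the second case of \eqref{eq:fundamental}, one computes $V_{(n-1)^2}=(0,2^{-(n-1)})^{n-1}\times\mathbb{T}^{n-1,\omega}$: the first $n-1$ coordinates are dyadic intervals of length exactly twice the corresponding coordinates of $V_m$, and the remaining coordinates are free. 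Applying, coordinate by coordinate in positions $1,\dots,n-1$, the elementary fact that any interval of length $L$ in $\mathbb{T}$ is contained in some dyadic interval of length $2L$, I would choose $t\in H_{(n-1)^2}$ so that $Q\subset t+V_{(n-1)^2}=:Q'$; in positions $n,n+1,\dots$ the containment is automatic because $V_{(n-1)^2}$ is free there. The cases $n=0,1$ are handled by taking $m'=0$ and $Q'=\tom$. In all cases, $|Q'|/|Q|=2^{m-m'}$ is a constant $C_m$ depending only on $m$.

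Given the lemma, for $x\in Q\subset Q'$ one has
\[
|f|_Q\leq\frac{|Q'|}{|Q|}|f|_{Q'}\leq C_m\,M^{\mathcal{R}_0}f(x),
\]
so taking the supremum over cubes of sizelevel $m$ through $x$ and then the maximum over $m\in\{m_1,\dots,m_k\}$ gives $M^Sf\leq C\,M^{\mathcal{R}_0}f$ pointwise with $C:=\max_i C_{m_i}$. Combining with the weak type $(1,1)$ bound for $M^{\mathcal{R}_0}$ from \cite{Fernandez-Roncal2020} and the trivial $L^\infty$ bound, Marcinkiewicz interpolation gives strong (and hence weak) type $(q,q)$ of $M^{\mathcal{R}_0}$ for all $q\in(1,\infty)$, and the pointwise domination transfers this to $M^{\mathcal{R}_0\cup S}$; the case $q=1$ is direct from the pointwise bound and the weak $(1,1)$ estimate for $M^{\mathcal{R}_0}$.

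The main obstacle is verifying the geometric lemma, which requires the candidate $m'=(n-1)^2$ to simultaneously (i) fit the indexing scheme \eqref{eq:fundamental} so that $V_{m'}$ is a genuine member of $\mathcal{R}_0$, (ii) have strictly fewer nonfree components than $V_m$ so the free coordinates of $Q$ are trivially covered, and (iii) double the side-lengths of $V_m$ in the shared nonfree positions so that the elementary dyadic-interval covering fact can be applied. All three properties follow from the specific decomposition $(n-1)^2=(n-2)^2+(2n-3)$ with $j'=2n'+1$, but confirming them requires carefully tracking the branched definition of $V_m$ in \eqref{eq:fundamental} and the structure of $H_m$ in \eqref{eq:Hs}.
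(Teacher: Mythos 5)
Your overall strategy --- peel off the finitely many sizelevels in $S$, dominate $M^{S}$ pointwise by a constant multiple of $M^{\mathcal{R}_0}$, and invoke the known weak $(1,1)$ bound for $M^{\mathcal{R}_0}$ --- is sound, but the geometric covering lemma on which the pointwise domination rests is false. The ``elementary fact'' you invoke, namely that any interval of length $L$ in $\mathbb{T}$ is contained in some dyadic interval of length $2L$ (or $4L$ in the positions where the ratio comes out to $4$), does not hold: an interval such as $\left(\tfrac12-\epsilon,\tfrac12+\epsilon\right)$ straddles the dyadic point $\tfrac12$ and is contained in \emph{no} proper dyadic subinterval of $\mathbb{T}$, however long. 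This is precisely the phenomenon the one-third trick and shifted dyadic grids were invented to circumvent, and consequently there is no sizelevel $m'\in\{1,\dots,m-1\}$ for which every $Q=g+V_m$ with $g\in\tom$ lies inside a single cube of $\mathcal{N}_{m'}$. Your candidate $m'=(n-1)^2$ fails already in one coordinate, so the covering step does not go through, and the claimed bound $M^{S,m}f\leq C_m\,M^{\mathcal{R}_0}f$ is not established.

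The pointwise domination can nonetheless be salvaged, but only by taking the degenerate choice $Q'=\tom=V_0\in\mathcal{R}_0$: then $Q\subset Q'$ trivially and $|f|_Q\leq 2^m\,|f|_{\tom}\leq 2^m\,M^{\mathcal{R}_0}f(x)$ for every $x$. The paper's actual proof sidesteps the geometry entirely and is both shorter and sharper: it bounds $\sup_{Q\in S}|f|_Q$ not by $M^{\mathcal{R}_0}f$ but by a constant depending on $f$, using Jensen's inequality to write $|f|_Q\leq |Q|^{-1/q}\|f\|_{L^q(\tom)}$; since $\{|Q|\}_{Q\in S}$ is finite this supremum is controlled by $\max_j |Q_j|^{-1/q}\|f\|_{L^q(\tom)}$, which combined with the weak $(q,q)$ bound for $M^{\mathcal{R}_0}$ (from interpolation) yields the claim directly. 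You should replace the covering lemma with one of these two workarounds.
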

 \begin{proof}
Since $M^{\mathcal{R}_0}$ is of weak type $(1,1)$, it follows from interpolation that it is also a bounded operator on $L^q(\tom)$, $q \in (1, \infty)$. Let $q \in [1, \infty)$. Then, given any $x\in \tom$ and any $f\in L^q(\tom)$,
\[
M^{\mathcal{R}_0\cup S}f(x)\leq \max \Big\{M^{\mathcal{R}_0}f(x), \, \sup_{Q\in S} |f|_Q \Big\},
\]
and by Jensen's inequality we have that
\[
|f|_Q = \frac{1}{|Q|}\int_{Q}|f(y)|dy\leq \Big(\frac{1}{|Q|}\int_{Q}|f(y)|^qdy\Big)^{1/q}\leq \frac{\|f\|_{L^q(\tom)}}{|Q|^{1/q}}.
\]
Therefore,  if  $Q_1,\ldots,Q_N\in S$ are such that $\{|Q|\}_{Q\in S}=\{|Q_j|\}_{j=1}^N$, then
\begin{align*}
\|M^{\mathcal{R}_0\cup S}f\|_{L^{q,\infty}(\tom)}&\leq C \max \Big\{ \| M^{\mathcal{R}_0}f \|_{L^{q,\infty}(\tom)}, \frac{\|f\|_{L^q(\tom)}}{|Q_1|^{1/q}},\ldots, \frac{\|f\|_{L^q(\tom)}}{|Q_N|^{1/q}}
 \Big\}\\
 &\leq C \max\Big\{ \|M^{\mathcal{R}_0}\|_{L^{q}(\tom)\to L^{q,\infty}(\tom)},\max_{j=1,\ldots, N}\frac{1}{|Q_j|^{1/q}}\Big\}\|f\|_{L^q(\tom)},
 \end{align*}
  which proves that $M^{\mathcal{R}_0\cup S}$ is of weak type $(q,q)$.
 \end{proof}
 
The previous lemma says that it is enough to study an extension of  $\mathcal{R}_0$ with cubes of countably many different sizes since otherwise the corresponding maximal operator is of weak type $(q,q)$. As we will see in the following lemma, apart from the latter, it is also enough to work just with an extension $S$ of $\mathcal{R}_0$ which is countable itself.

\begin{lemma}\label{lem:countable}
Let $S\subset \mathcal{R}$ be an infinite set. Then there exists a countable subset $S' \subset S$ such that $M^{\mathcal{R}_0\cup S'}f = M^{\mathcal{R}_0\cup S}f$ holds for each $f \in L^1(\tom)$. In particular, for given $q \in [1, \infty)$, the operator $M^{\mathcal{R}_0\cup S}$ is of restricted weak type $(q,q)$ if and only if $M^{\mathcal{R}_0\cup S'}$ is.
\end{lemma}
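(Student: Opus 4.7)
The plan is to exploit the second countability of $\tom$ and the openness of the cubes in $\mathcal{R}$ to extract a countable subfamily at each sizelevel. First I would decompose $S = \bigcup_{m \geq 0} S_m$, where $S_m$ collects the cubes in $S$ of sizelevel $m$. Since a countable union of countable sets is countable, and
\[
M^{\mathcal{R}_0 \cup S} f(x) = \max\bigl\{M^{\mathcal{R}_0} f(x),\ \sup_m \sup_{Q \in S_m,\, x \in Q} |f|_Q\bigr\},
\]
it suffices to produce, for each $m$, a countable $S'_m \subset S_m$ such that
\[
\sup_{Q \in S_m,\ x \in Q} |f|_Q \;=\; \sup_{Q \in S'_m,\ x \in Q} |f|_Q
\]
for every $x \in \tom$ and every $f \in L^1(\tom)$; then $S' := \bigcup_m S'_m$ does the job.

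Fix $m$ and parametrize $S_m = \{t + V_m : t \in T_m\}$ for some $T_m \subset \tom$ (choosing one representative $t$ per cube). Since $\tom$ is second countable (a countable product of second countable spaces), every subspace is separable, so one can choose a countable $T'_m \subset T_m$ which is dense in the subspace topology of $T_m$; set $S'_m := \{t + V_m : t \in T'_m\}$. For each fixed $f \in L^1(\tom)$, the averaging function
\[
g_m(t) := |f|_{t + V_m} \;=\; |V_m|^{-1}\bigl(|f| \ast \chi_{-V_m}\bigr)(t)
\]
is continuous on $\tom$ by the translation continuity of the $L^1$ norm, a standard property of Haar measure on compact abelian groups.

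Now let $x \in \tom$, let $Q = t + V_m \in S_m$ contain $x$, and let $\varepsilon > 0$. Since $V_m$ is open, so is $x - V_m$, and $t \in x - V_m$. By continuity of $g_m$ one can pick an open neighborhood $U$ of $t$ with $U \subset x - V_m$ and $|g_m(s) - g_m(t)| < \varepsilon$ for every $s \in U$. Density of $T'_m$ in $T_m$ yields some $s \in U \cap T'_m$, and the cube $Q' := s + V_m$ then lies in $S'_m$, contains $x$, and satisfies $|f|_{Q'} > |f|_Q - \varepsilon$. Taking the supremum over $Q \in S_m$ with $x \in Q$ and letting $\varepsilon \to 0$ gives the nontrivial inequality; the reverse is immediate from $S'_m \subset S_m$. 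The final clause follows at once because restricted weak type $(q,q)$ is tested only against characteristic functions, which belong to $L^1(\tom)$. The one delicate point I expect is the interplay between continuity of $g_m$ and the openness of $V_m$, which is precisely what allows the density argument to upgrade approximation to equality; the rest is routine separability and sizelevel bookkeeping.
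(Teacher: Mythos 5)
Your proof is correct and follows essentially the same route as the paper's: decompose $S$ by sizelevel, extract a countable dense subset of translation vectors at each level, and exploit continuity of the averaging map $t \mapsto |f|_{t+V_m}$ together with the openness of $V_m$ to approximate any average by one coming from the countable subfamily. The only cosmetic differences are that the paper builds the dense countable subfamily explicitly from a rational grid and justifies the limiting step via dominated convergence, while you invoke separability of subspaces of a second-countable space and the convolution identity $g_m = |V_m|^{-1}\,|f| \ast \chi_{-V_m}$ to get continuity directly.
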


\begin{proof}
The proof relies on the fact that for each $n \in \mathbb{N}$ the space $\mathbb{T}^n$ with its natural topology is separable.

For each $m \in \mathbb{N}$ consider the set $S_m \subset S$ of all elements of $S$ which are translations of $V_m \in \mathcal{R}$ (we allow the case $S_m = \emptyset$). Write $m = n^2 + j$, where $n \in \mathbb{N} \cup \{0\}$ and $j \in \{ 1, \dots, 2n+1\}$. Then each $Q \in S_m$ is of the form $Q = T_Q+V_m$ for some $T_Q = (t_1, \dots, t_{n+1}, 0, \dots) \in \tom$. Denote
\[
{\bf Q}^{n+1}:= \left[ \mathbb{Q}\cap[0,1)\right]^{n+1}\times \{0^{(n+1}\}
\]
and, for each ${\bf q}\in {\bf Q}^{n+1}$ and each $l \in \mathbb{N}$, define  $S'(m, {\bf q}, l)$ to be  {the singleton of} one of the cubes $Q\in S_m$ with $d({\bf q},T_Q)<2^{-l}$ if there is any. Otherwise set $S'(m, {\bf q}, l)=\emptyset$.

Finally, take
\[
S' :=  \bigcup_{m\in \mathbb{N}}\bigcup_{{\bf q}\in  {\bf Q}^{\ell(m)}}\bigcup_{l\in\mathbb{N}} S'(m, {\bf q}, l).
\]
Of course, by construction, $S'$ is countable. Moreover, if $Q \in S \setminus S'$, then by the density of rationals there exists a sequence $\{Q_j\}_j \subset S'$ approaching $Q$ and, by the Lebesgue dominated convergence theorem, for a given function $f \in L^1(\tom)$ we have $\lim_{j \rightarrow \infty} |f|_{Q_j} = |f|_Q$. 

The inequality $M^{\mathcal{R}_0\cup S'}f\leq M^{\mathcal{R}_0\cup S}f$ is obvious. For finishing the proof note that, for each $x\in\tom$ and each $\varepsilon>0$, there is a cube $Q\in S$ containing $x$ with $  M^{\mathcal{R}_0\cup S}f(x)\leq|f|_Q+\varepsilon/2$.    By the proved convergence, there is, associated with $Q$, a cube $\widetilde{Q}\in S'$ containing $x$ with $||f|_Q-|f|_{\tilde{Q}}|<\varepsilon/2$. Therefore, \[
\begin{split}
M^{\mathcal{R}_0\cup S}f(x) &\leq |f|_Q+\frac{\varepsilon}2 = |f|_Q-|f|_{\tilde{Q}}+|f|_{\tilde{Q}}+\frac{\varepsilon}2 \leq |f|_{\tilde{Q}}+\varepsilon\leq 
M^{\mathcal{R}_0\cup S'}f(x)+\varepsilon.
\end{split}
\]
Since this is valid for any $\varepsilon>0$, we have that $M^{\mathcal{R}_0\cup S}f(x)\leq M^{\mathcal{R}_0\cup S'}f(x)$.
\end{proof}

\subsection{Threshold bases and \texorpdfstring{$(\varepsilon, l)$}{(e,l)}-configurations. Proof of Theorem~\ref{unweighted_unboundedness}}

In view of Lemma~\ref{lem:countable}, in order to seek the threshold basis for the restricted weak $L^q$ unboundedness of the associated maximal operator, we have to find  an appropriate infinite countable set $S=\{T_j+Q_j\}_{j\in\mathbb{N}}$ of translated dyadic cubes $Q_j$, where $\{T_j\}_{j\in \mathbb{N}}$ is a sequence of translations in $\tom$, such that the associated maximal operator $M^{\mathcal{R}'}$, with $\mathcal{R}'=\mathcal{R}_0\cup S$, is not of restricted weak type $(q,q)$.

Notice that $S$ must be chosen carefully since in general it is easy to produce an infinite set $S$ as above such that $M^{\mathcal{R}'}$ is smaller than, say, $2M^{\mathcal{R}_0}$, and hence bounded. Indeed, for each $j$ take arbitrary $R_j \in \mathcal{R}_0$ with its children $R_j'$ and $R_j^{''}$, and let $T_j+Q_j$ be the translate of $R_j'$, lying in the middle of $R_j$ (see Figure~\ref{fig:bdd_construction}). For any nonnegative function, its average over $T_j+Q_j$ is controlled by $2$ times the average over $R_j$, hence we are done. 
\begin{figure}[h]
    \centering
    \begin{minipage}[b]{0.45\textwidth}
    \includegraphics[scale=0.4]{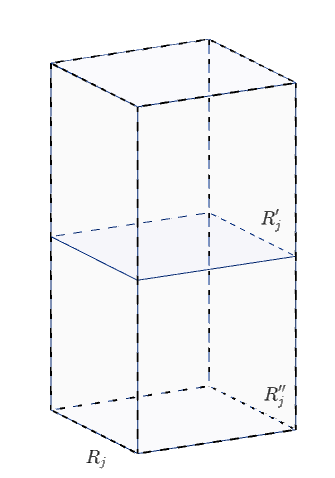}
  \end{minipage}
\
  \begin{minipage}[b]{0.45\textwidth}
      \includegraphics[scale=0.4]{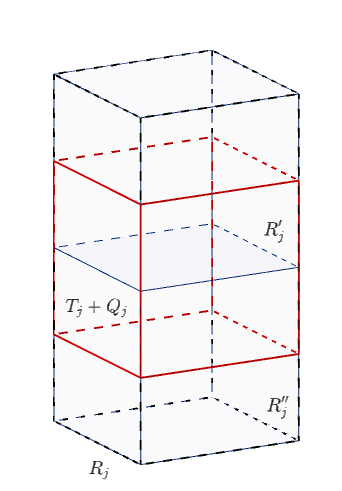}
  \end{minipage}
    \caption{The cube $R_j$ with its two children, $R_j'$ and $R_j''$, at left. The same cubes but now including  the new cube $T_j+Q_j$, at right.}
    \label{fig:bdd_construction}
\end{figure}

Another interesting situation arises if all the elements of $S=\{T_j+Q_j\}_{j\in\mathbb{N}}$ are disjoint. In this case, the considered boundedness properties of $M^{\mathcal{R}_0}$ and $M^{\mathcal{R}'}$ are  the same. Indeed, if $x \in \tom \setminus \bigcup_{j\in\mathbb{N}} T_j+Q_j$, then $M^{\mathcal{R}'}f(x) - M^{\mathcal{R}_0}f(x) = 0$, while  in case $x \in T_j+Q_j$ for some $j\in\mathbb{N}$ we have $0 \leq M^{\mathcal{R}'}f(x) - M^{\mathcal{R}_0}f(x) \leq |f|_{T_j+Q_j}$. Thus, for each $q \in [1, \infty]$ we have \[
\|M^{\mathcal{R}'}f - M^{\mathcal{R}_0}f \|_{L^q(\tom)} \leq \big\|  \sum_{j\in\mathbb{N}} \chi_{T_j+Q_j} \, |f|_{T_j+Q_j}  \big\|_{L^q(\tom)} \leq \|f\|_{L^q(\tom)}.\]

Even more, when the elements of $S$ have bounded overlapping, $M^{\mathcal{R}_0}$ and $M^{\mathcal{R}'}$ have also the same boundedness properties. 

\begin{lemma}
\label{lem:bddover}
Let $\mathcal{R}'=\mathcal{R}_0\cup S$, with $S=\{T_j+Q_j\}_{j\in \mathbb{N}}$. If 
$$
\big\|  \sum_{Q\in S}\chi_{Q} \big\|_{L^\infty(\tom)} \le N
$$ 
for some $N\in \mathbb{N}$, then for $q \in [1, \infty]$ we have
$$
\|M^{\mathcal{R}'}f - M^{\mathcal{R}_0}f\|_{L^q(\tom)}\le N \|f\|_{L^q(\tom)}.
$$
\end{lemma}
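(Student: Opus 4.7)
The plan is to produce a positive (in fact, linear) dominator for the pointwise difference $M^{\mathcal{R}'}f-M^{\mathcal{R}_0}f$ and bound it on $L^q$ directly. Since $\mathcal{R}_0\subset\mathcal{R}'$, the supremum defining $M^{\mathcal{R}'}f(x)$ splits as the maximum of the sup over cubes of $\mathcal{R}_0$ containing $x$ (which is $M^{\mathcal{R}_0}f(x)$) and the sup over cubes of $S$ containing $x$. Distinguishing whether $M^{\mathcal{R}_0}f(x)$ already dominates or not, one obtains
\[
0 \;\le\; M^{\mathcal{R}'}f(x) - M^{\mathcal{R}_0}f(x) \;\le\; \sup_{j\,:\,x\in T_j+Q_j}|f|_{T_j+Q_j} \;\le\; \sum_{j\in\mathbb{N}} \chi_{T_j+Q_j}(x)\,|f|_{T_j+Q_j} \;=:\; Tf(x).
\]
By Lemma~\ref{lem:countable} we may assume $S$ is countable, so the sum is well-defined, and it suffices to show $\|Tf\|_{L^q(\tom)}\le N\|f\|_{L^q(\tom)}$ for every $q\in[1,\infty]$.

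For $q=\infty$ the bound is immediate: at each $x$, at most $N$ summands in $Tf(x)$ are non-zero (by the overlap hypothesis), and each is bounded by $\|f\|_{L^\infty(\tom)}$, so $Tf(x)\le N\|f\|_{L^\infty(\tom)}$. For $1\le q<\infty$, the same pointwise bound on the number of non-zero summands together with the power mean inequality yields
\[
Tf(x)^q \;\le\; N^{q-1}\sum_{j\in\mathbb{N}} \chi_{T_j+Q_j}(x)\,|f|_{T_j+Q_j}^q.
\]
Jensen's inequality gives $|f|_{T_j+Q_j}^q \le |T_j+Q_j|^{-1}\int_{T_j+Q_j}|f|^q$. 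Substituting, integrating and swapping sum and integral,
\[
\int_\tom Tf(x)^q\,dx \;\le\; N^{q-1}\sum_{j\in\mathbb{N}}\int_{T_j+Q_j}|f(x)|^q\,dx \;=\; N^{q-1}\int_\tom |f(x)|^q\sum_{j\in\mathbb{N}}\chi_{T_j+Q_j}(x)\,dx \;\le\; N^q\|f\|_{L^q(\tom)}^q.
\]
Taking $q$-th roots and combining with the $q=\infty$ estimate closes the argument.

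This is essentially a controlled-overlap generalisation of the disjoint case treated immediately before the statement: the $N^{q-1}$ loss from replacing the sup by a sum via the power mean inequality is exactly compensated by a single extra factor of $N$ from the overlap hypothesis when one integrates the characteristic functions, producing the clean constant $N$. I do not anticipate any real obstacle; the only subtleties are ensuring the pointwise domination remains valid in the trivial case where $M^{\mathcal{R}_0}f(x)$ already dominates (the difference is then just $0$, and the inequality trivially holds), and invoking Lemma~\ref{lem:countable} so that the sum defining $Tf$ involves at most countably many terms.
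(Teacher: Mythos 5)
Your argument is correct, and it takes a genuinely different route from the paper's. Both proofs begin with the same pointwise domination $0 \le M^{\mathcal{R}'}f(x)-M^{\mathcal{R}_0}f(x) \le \sum_{Q\in S}\chi_Q(x)\,|f|_Q$, but where the paper treats the endpoints $q=1$ (a direct Fubini computation) and $q=\infty$ (a pointwise count) and then invokes Riesz--Thorin interpolation on the linear operator $\Sigma^S f = \sum_{Q\in S} f_Q\chi_Q$, you instead prove the $L^q$ bound directly for every $q$: you use the overlap hypothesis to note that at most $N$ summands are nonzero at a.e.\ point, apply H\"older (the power-mean inequality) to get $Tf(x)^q \le N^{q-1}\sum_Q \chi_Q(x)|f|_Q^q$, then Jensen and Fubini to absorb the remaining factor of $N$. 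The computation checks out, with the same sharp constant $N$ appearing from $N^{q-1}\cdot N = N^q$. What your route buys is elementarity and self-containedness: it needs no interpolation theorem, only H\"older and Jensen, and in fact recovers the paper's $q=1$ case as the degenerate instance $N^{q-1}=1$. What the paper's route buys is brevity once the two endpoint bounds are in hand, and it isolates the linearity of $\Sigma^S$ as the structural ingredient. One small remark: in the statement $S=\{T_j+Q_j\}_{j\in\mathbb{N}}$ is already countable by assumption, so the appeal to Lemma~\ref{lem:countable} is unnecessary here (though harmless).
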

\begin{proof}
For each $x\in \tom$, we define $S_x=\{Q\in \mathcal{R}'\setminus\mathcal{R}_0: x\in Q\}$. Then
$$
|M^{\mathcal{R}'}f(x) - M^{\mathcal{R}_0}f(x)|\le \sup_{Q\in S_x}|f|_Q \leq \sum_{Q\in S_x}|f|_Q = \sum_{Q\in S}|f|_Q \chi_Q(x).
$$
Observe that if $S_x=\emptyset$, then the right hand side is zero. Note also that $S=\cup_x S_x$. In the case $q=1$ we have 

\begin{align*}
    \big\| \sum_{Q\in S}|f|_Q \chi_Q \big\|_{L^1(\tom)}
    &=\int_{\tom}\sum_{Q\in S}\frac{1}{|Q|}\int_{\tom}|f(y)|\chi_Q(y)\chi_Q(x)\,dy\,dx\\
    &=\int_{\tom}|f(y)|\sum_{Q\in S}\left(\frac{1}{|Q|}\int_{\tom}\chi_Q(x)\,dx\right)\chi_Q(y)\,dy\\
     &= \int_{\tom}|f(y)|\sum_{Q\in S}\chi_{Q}(y)\,dy\le N\|f\|_{L^1(\tom)}.
\end{align*}
Also, for $q=\infty$ we have $\|\sum_{Q\in S}|f|_Q \chi_Q \|_{L^\infty(\tom)} \leq N \|f\|_{L^\infty(\tom)}$ by a simple pointwise estimate. So, by the Riesz--Thorin interpolation theorem applied to the linear operator $\Sigma^S$ given by 
$
\Sigma^S f(x) := \sum_{Q\in S} f_Q \chi_Q(x),
$
we obtain \[
\|M^{\mathcal{R}'}f - M^{\mathcal{R}_0}f\|_{L^q(\tom)} \leq \big\|\Sigma^S |f| \big\|_{L^q(\tom)} \leq N \big \| |f| \big \|_{L^q(\tom)} = N \| f \|_{L^q(\tom)}\]
for arbitrary $q \in [1, \infty]$.
 \end{proof}
 
As the above discussion shows, one cannot expect that $M^{\mathcal{R}'}$ will always be unbounded. It is hard to find the threshold property of $S$ which makes the family $\mathcal{R}'=\mathcal{R}_0\cup S$ have    an unbounded associated maximal operator $M^{\mathcal{R}'}$. Instead, we have found a condition on $S$ which ensures that the associated maximal operator is large. In case the condition is not satisfied, we can always slightly enlarge $S$ in such a way that the new basis has unbounded associated operator.

In what follows, we will look for a family with unbounded  but still in some sense controlled overlapping. This keeps us close to our primary motivation, that is, finding the accurate threshold condition. In this direction, let us recall
the concept of sparse families of cubes (see \cite[Definition 6.1]{LernerNazarov}). We say that a family $\mathcal{S}$ is sparse if one can find $0<\eta<1$ and a family $\{E_Q\}_{Q\in\mathcal{S}}$ of pairwise disjoint measurable sets satisfying $E_Q\subset Q$ and $\eta|Q|<|E_Q|$ for all $Q\in\mathcal{S}$. For sparse families, we have a control on the overlapping of its elements. Indeed, under the sparse condition, the function $\sum_{Q\in\mathcal{S}}\chi_{Q}$ is at least integrable. 

The structures we use for proving results for maximal operators are the $(\varepsilon, l)$-configurations introduced in Definition~\ref{epsn}.
They involve a more restrictive (but at the same time more precise) condition than just the sparsity of the family. However, the example below indicates that it is still relatively easy to find the said structures among finite families of cubes in $\tom$.  

\begin{example}\label{ex:epsn}
Let $Q \in \mathcal{R}_0$ be such that $\ell(m_{Q}) \geq l$. For each $k \in \{1, \dots, l\}$ set $Q^{(k)} = T^{(k)} + Q$, where $T^{(k)} = (0, \dots, 0, t_k, 0, \dots)$ with $\varepsilon |\pi_k(Q)| \leq t_k \leq (1-\varepsilon)|\pi_k(Q)|$. Then $\{Q^{(1)}, \dots, Q^{(l)}\}$ is an $(\varepsilon, l)$-configuration (with $Q$ playing the role of $A$  {in  Definition \ref{epsn})}. 
\end{example}
Since this example will be of central importance in the case $t_k=(1-\varepsilon) |\pi_k(Q)|$, we will give it a name. 

\begin{definition}
\label{def:confaround}
An $(\varepsilon,l)$-configuration of the type described in Example \ref{ex:epsn} with $t_k=(1-\varepsilon) |\pi_k(Q)|$ will be called an \emph{$(\varepsilon,l)$-configuration around $Q$} (see Figure~\ref{fig:el-conf}).
\end{definition}

\begin{figure}[h]
    \centering
    \begin{minipage}[b]{0.5\textwidth}
    \includegraphics[scale = 1.4]{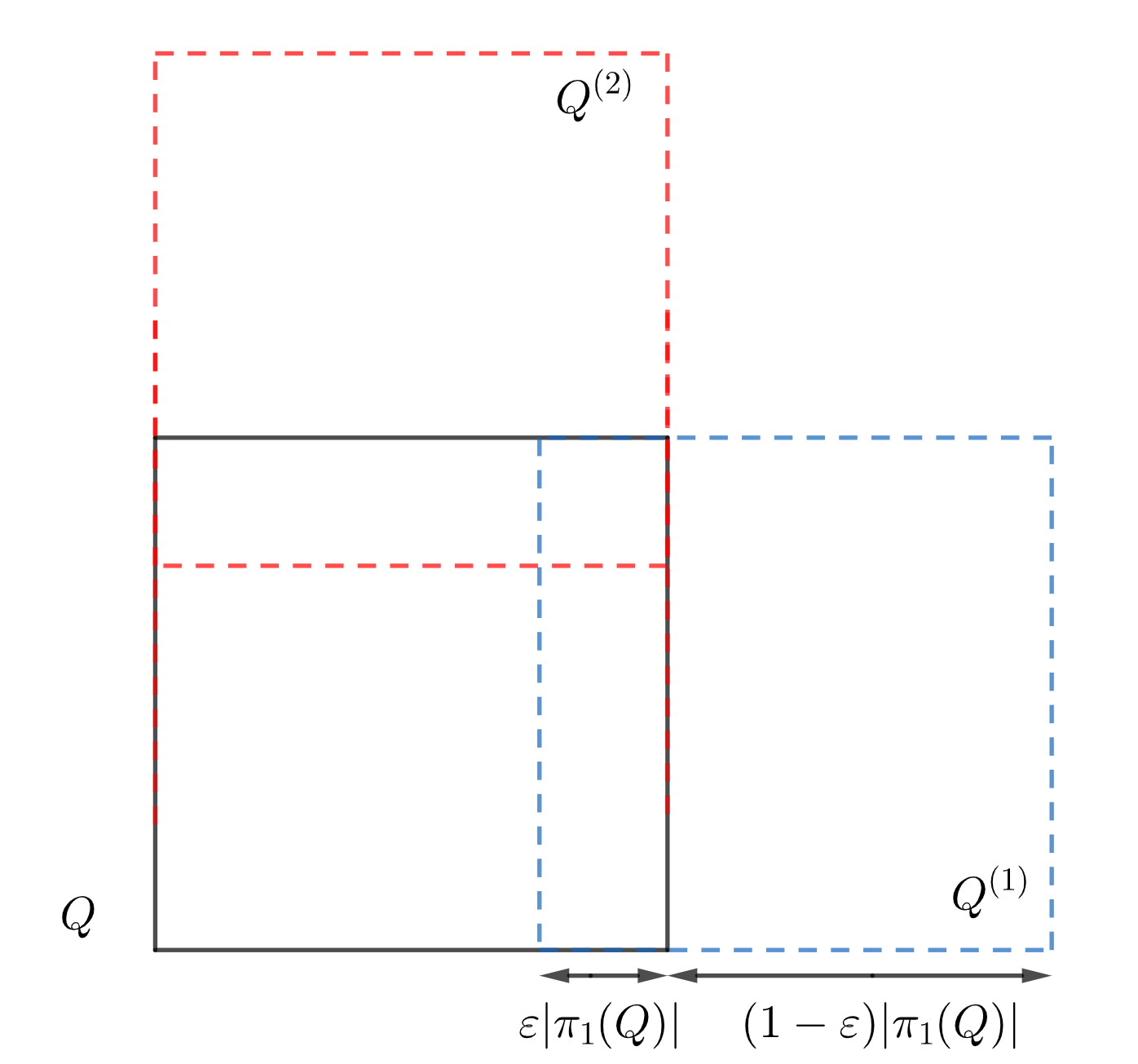} 
  \end{minipage}
  \
  \begin{minipage}[b]{0.45\textwidth}
     \includegraphics[scale=0.25]{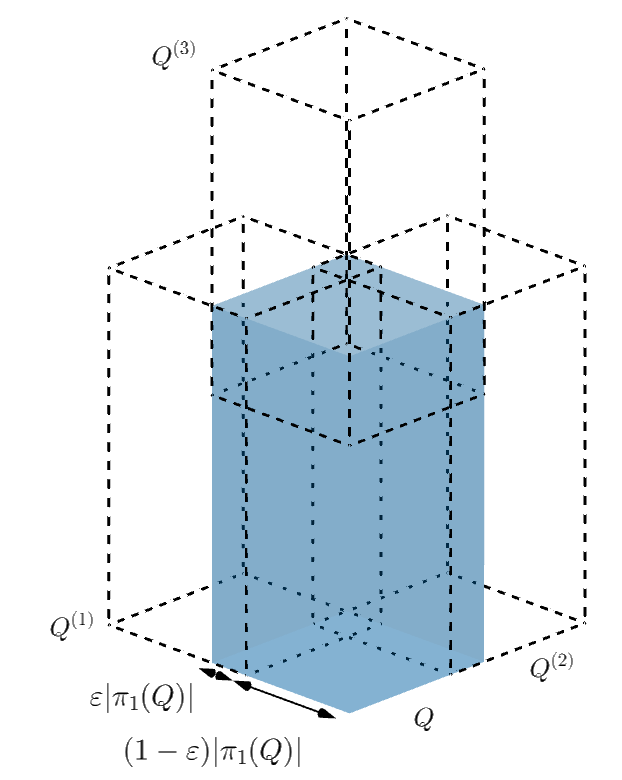}
  \end{minipage}
    \caption{At left, the projection on the first two components of a cube $Q$ with $2$ nonfree components with an $(\varepsilon,2)$-configuration around it. At right, the projection on the first three components of another cube $Q$ with $3$ nonfree components and an $(\varepsilon,3)$-configuration around it.}
    \label{fig:el-conf}
\end{figure}

Observe that each $(\varepsilon,l)$-configuration is a family of $l$ cubes which are  $\varepsilon$-sparse, as we can find a pairwise disjoint family $\{E_{Q^{(k)}}\}_{k=1}^l$ of measurable sets satisfying the definition of sparsity for $\eta=\varepsilon$. Indeed, define for each $k\in \{1,\ldots,l\}$ the set $E_{Q^{(k)}}:=Q^{(k)}\backslash A$. The family  $\{E_{Q^{(k)}}\}_{k=1}^l$ is pairwise disjoint by the definition of $(\varepsilon,l)$-configuration and observe that for each $k\in \{1,\ldots,l\}$ we have $E_{Q^{(k)}}\subset Q^{(k)}$ and $|E_{Q^{(k)}}|=|Q^{(k)}\backslash A| = |Q^{(k)}|-|Q^{(k)}\cap A|\geq \varepsilon |Q^{(k)}|$. 

\begin{remark} It is worth noting that, following Example~\ref{ex:epsn}, one can construct an $(\varepsilon,l)$-configuration such that $Q^{(1)} \cup \dots \cup Q^{(l)}$ is contained in a given cube $\widetilde{Q} \in \mathcal{R}_0$.
\end{remark}

The maximal operators $M^{ \mathcal{R}'}$ associated with families $\mathcal{R}_0\subset  \mathcal{R}'\subset \mathcal{R}$  containing infinitely many appropriately chosen $(\varepsilon,l)$-configurations can be shown to have pathological $L^p$ behavior. We are ready to prove Theorem~\ref{unweighted_unboundedness}.
 
 \begin{proof}[Proof of Theorem~\ref{unweighted_unboundedness}]
  Let $f_j = \chi_{A_j}$, where $A_j$ is the set $A$ from Definition~\ref{epsn} specified for $S_j$. Then for each $x \in Q \in S_j$ we have 
  \[
  M^{\mathcal{R}'} f_j {(x)} \geq |f_j|_{Q} = \frac{|Q \cap A_j|}{|Q|} \geq \varepsilon_j.
  \]
  Thus, since $|Q_j^{(k)} \setminus A_j|  \ge   \varepsilon_j |A_j|$, we obtain
 \begin{equation}
 \label{eps2}
 \frac{\|M^{\mathcal{R}'}f_j\|_{L^{q,\infty}(\tom)}}{\|f_j\|_{L^q(\tom)}} \geq \frac{\varepsilon_j}{2} \, \Bigg(\frac{ \sum_{k=1}^{l_j} |Q_j^{(k)} \setminus A_j|}{|A_j|}\Bigg)^{1/q}
 \geq \frac{1}{2} \, (\varepsilon_j^{q+1} l_j)^{1/q},
 \end{equation} 
  and we are done.  
 \end{proof}

\begin{proof}[Proof of Corollary~\ref{corollary}]
 Observe that, in view of Example~\ref{ex:epsn}, for each $j \in \mathbb{N}$ one can find a $(\frac{1}{2}, j)$-configuration contained in $\mathcal{R}$. Then, since $\lim_{j \rightarrow \infty} \frac{j}{2^{q+1}} = \infty$, an application of Theorem~\ref{unweighted_unboundedness} gives the desired result.  
\end{proof}

So we figured out a structure for a family $S$ and a related condition to make the maximal operator associated with the family $\mathcal{R}'=\mathcal{R}_0\cup S$ an unbounded operator. In view of Lemma~\ref{lem:bddover}, the families we have to consider have unbounded overlapping   $\sum_{Q\in S}\chi_Q$ in the $L^{\infty}(\tom)$ sense. It could be thought that somehow the boundedness properties of the overlapping of the cubes in certain $L^q(\tom)$ sense control the behavior of the associated maximal function $M^{\mathcal{R}'}$. However, this is not the case, as we can find families satisfying the condition on the $(\varepsilon,l)$-configurations with either bounded or unbounded overlapping in the $L^1(\tom)$ sense. Thus, it seems that the behavior of the overlapping $\sum_{Q\in S}\chi_Q$ of the cubes does not  actually govern the boundedness properties of the associated maximal function $M^{\mathcal{R}_0\cup S}$. With this said, we remark that we still do not know what is the precise threshold property governing the boundedness properties of the maximal operators we are studying.

In what follows, we will show that the structure we found is good enough to find bases $\mathcal{R}_0\subset\mathcal{R}'\subset \mathcal{R}$ with prescribed boundedness properties of $M^{\mathcal{R'}}$. Moreover, in the context of certain specific collections of $(\varepsilon,l)$-configurations around cubes, we will introduce conditions which are sharp, i.e., they are sufficient and necessary at the same time.

\subsection{Predetermined range of boundedness. Proof of Theorem~\ref{thm:r_0}}
\label{sub:predetermined-range}
Our next goal is to use the technique introduced earlier in its most effective form. As a result, we solve a certain subtle problem regarding the occurrence of restricted weak type $(q,q)$ inequalities for bases $\mathcal{R}_0 \subset \mathcal{R}' \subset \mathcal{R}$, by using the $(\varepsilon, l)$-configurations with parameters chosen very carefully. Namely, for given $q_0 \in (1, \infty)$ we find $\mathcal{R}'$ such that the restricted weak type $(q,q)$ inequality for the associated maximal operator holds if and only if $q$ belongs to a predetermined range of the form $[q_0,\infty]$ or $(q_0, \infty]$. This result has a very appealing form and follows directly from Theorem~\ref{thm:r_0} proved below. 

\begin{proof}[Proof of Theorem~\ref{thm:r_0}]
First, let us take $q \in (1, \infty)$ and  assume $\sup_{j\in\mathbb{N}}\varepsilon_jl_j^{1/q}=\infty$. Let us see that $M^{\mathcal{R}'}$ is not of restricted weak type $(q,q)$.  Indeed, if $f_j = \chi_{Q_j}$, then for each $x \in Q \in S_j$ we have $M^{\mathcal{R}'} f_j(x) \geq |f_j|_{Q} = \frac{|Q \cap Q_j|}{|Q|} = \varepsilon_j$. Thus, since $|Q_j^{(k)} \setminus Q_j|=|Q_j^{(k)}|-|Q_j^{(k)}\cap Q_j| = (1-\varepsilon_j)|Q_j| \geq |Q_j|/2$, we have that
\begin{equation*}
\frac{\|M^{\mathcal{R}'}f_j\|_{L^{q,\infty}(\tom)}}{\|f_j\|_{L^q(\tom)}} \geq \frac{\varepsilon_j}{2} \, \left(\frac{ \sum_{k=1}^{l_j} |Q_j^{(k)} \setminus Q_j|}{|Q_j|}\right)^{1/q}
\geq \frac{\varepsilon_j l_j^{1/q}}{2^{1+1/q}},
\end{equation*} 
and we are done.

If $q=1$ and $\sup_{j\in\mathbb{N}} l_j = \infty$, then for $U_j = \bigcap_{k=1}^{l_j} Q_j^{(k)}$ we have that $M^{\mathcal{R}'} \chi_{U_j} (x) = \varepsilon_j^{l_j}$ for each $x \in E_j$. Thus, since $|U_j| = \varepsilon_j^{l_j} |Q_j|$ and $|Q_j^{(k)} \setminus Q_j| \geq |Q_j|/2$, we get that
\[
\frac{\|M^{\mathcal{R}'}\chi_{U_j} \|_{L^{1,\infty}(\tom)}}{\|\chi_{U_j}\|_{L^1(\tom)}} \geq \frac{\varepsilon_j^{l_j}}{2} \cdot \frac{ \sum_{k=1}^{l_j} |Q_j^{(k)} \setminus Q_j|}{|U_j|}
= \frac{(1-\varepsilon_j) l_j}{2} \geq \frac{l_j}{4},
\] 
and we finished the first part of the proof.  

Let us now assume that $q=1$ and $\sup_{j\in\mathbb{N}} l_j < \infty$. In this case the elements of $S$ have bounded overlapping. By Lemma~\ref{lem:bddover} the operator $M^S$, and hence also $M^{\mathcal{R}'}$, is of restricted weak type $(1,1)$. 

It remains to prove that if $q \in (1, \infty)$ and $\sup_{j \in \mathbb{N}} \varepsilon_j l_j^{1/q} < \infty$, then $M^{\mathcal{R}'}$ is of restricted weak type $(q,q)$. 
Let us start with a series of reductions:

\begin{enumerate}
    \item \emph{ It suffices to work just with $M^S$}:
    
    Indeed, this follows from the quasi-triangle inequality for $L^{q,\infty}(\tom)$ and the fact that $M^{\mathcal{R}_0}$ is of restricted weak type $(q,q)$ for any $q \in (1, \infty)$.
    \item  \emph{We just have to find a uniform constant $C_q(S)>0$ such that the estimate $\|M^{S_j} \chi_{U} \|_{L^{q,\infty}(\tom)} / |U|^{1/q} \leq C_q(S)$ holds for all $j\in\mathbb{N}$ and all $U\subset E_j$}:
    
    The operator $M^S$ vanishes outside $\bigcup_{j \in \mathbb{N}} E_j$, and so  we can focus only on measurable sets $U \subset \bigcup_{j \in \mathbb{N}} E_j$ such that $|U| > 0$. Let $U_j = U \cap E_j$. Denote $D^\lambda = \{ x \in \tom : M^S \chi_U(x) > \lambda \}$ and $D^\lambda_j = \{ x \in E_j : M^{S_j} \chi_{U_j}(x) > \lambda \}$. Since the sets $E_j$ are disjoint, we have $D^\lambda_j = D^\lambda \cap E_j$. Thus,
\[
\frac{\lambda^q |D^\lambda|}{|U|}
= \frac{\lambda^q (|D^\lambda_1| + |D^\lambda_2| + \dots)}{|U_1| + |U_2| + \dots}
\leq \sup_{j \in \mathbb{N} \, : \, |U_j| > 0} \Big( \frac{\lambda^q |D^\lambda_j|}{|U_j|} \Big) \leq C^q_q(S).
\]
\item \emph{For a given $j\in \mathbb{N}$, we can just focus on the cases $U \subset Q_j$ or $U \subset E_j \setminus Q_j$}:

Indeed, for $U \subset E_j$ we write $U_1 = U \cap Q_j \subset Q_j$ and $U_2 = U \setminus U_1 \subset E_j \setminus Q_j$. Then the claim for $U$ follows from the quasi triangle inequality for $L^{q, \infty}(\tom)$ and the postulated maximal inequality for $U_1$ and $U_2$.
\item \emph{Moreover, we can just focus on the case $U\subset Q_j$ since we have the estimate $\|M^{S_j} \chi_{U} \|_{L^{q,\infty}(\tom)} \leq 2^{1/q}|U|^{1/q}$ for $U \subset E_j \setminus Q_j$}:

Observe that for the case  $U \subset E_j \setminus Q_j$ we can define $U^{(k)} = U \cap Q_j^{(k)}$ for all $k\in\{1,\ldots,l_j\}$. Note now that, for any $\lambda>0$, the level set $\{x\in \tom: M^{S_j}\chi_U(x)>\lambda\}$ can be decomposed as the following disjoint union
\[
\{x\in Q_j: M^{S_j}\chi_U(x)>\lambda\} \cup \bigcup_{k=1}^{l_j} \{ x \in Q_j^{(k)} \setminus Q_j : M^{S_j} \chi_{U^{(k)}}(x) > \lambda \}.
\]
Now, on one hand, we have that
\begin{align*}
   \sup_{\lambda>0}\lambda^q|\{x\in Q_j: M^{S_j}\chi_U(x)>\lambda\}| &\leq  \max_{k \in \{1,\ldots,l_j\}} \left(\frac{|U^{(k)}|}{|Q_j^{(k)}|}\right)^q|Q_j|\\
   &=  \max_{k \in \{1,\ldots,l_j\}}|U^{(k)}| \left(\frac{|U^{(k)}|}{|Q_j^{(k)}|}\right)^{q-1}\\
   &\leq  \max_{k \in \{1,\ldots,l_j\}}|U^{(k)}|\leq |U|,
\end{align*}
where it has been used the fact that $|Q_j^{(k)}|=|Q_j|$ for all $k\in\{1,\ldots,l_j\}$. On the other hand, for each $k\in\{1,\ldots,l_j\}$, 
by a similar argument to the one above we get that
\[
\sup_{\lambda>0} \lambda^q  |\{ x \in Q_j^{(k)} \setminus Q_j : M^{S_j} \chi_{U}(x) > \lambda \}|\leq |U^{(k)}|.
\]
The disjointness of the sets $ Q_j^{(k)} \setminus Q_j$ gives then the inequality
\[
\|M^{S_j} \chi_{U} \|_{L^{q,\infty}(\tom)} \leq 2^{1/q}|U|^{1/q}
\]
in the case $U \subset E_j \setminus Q_j$.

\item \emph{We trivially have that $\|(M^{S_j} \chi_{U}) \cdot \chi_{Q_j} \|_{L^{q,\infty}(\tom)} \leq  |U|^{1/q}$ for any $U\subset Q_j$, and so we just have to study $\|(M^{S_j} \chi_{U}) \cdot \chi_{E_j \setminus Q_j} \|_{L^{q,\infty}(\tom)}$}:

Indeed, as before, for each $x \in E_j$ we have $M^{S_j} \chi_{U}(x) \leq |U| / |Q_j|$, which clearly implies 
\[
\|(M^{S_j} \chi_{U}) \cdot \chi_{Q_j} \|_{L^{q,\infty}(\tom)} \leq \frac{|U|}{|Q_j|} \cdot |Q_j|^{1/q} \leq |U|^{1/q}.
\]

\end{enumerate}

After all these reductions, we are left with the following task: for $q \in (1, \infty)$ and $j\in\mathbb{N}$ we have to find a constant $C_q(S)>0$ such that
\[
\|(M^{S_j} \chi_{U}) \cdot \chi_{E_j \setminus Q_j} \|_{L^{q,\infty}(\tom)}/|U|^{1/q}\leq C_q(S) 
\]
for all measurable $U\subset Q_j$ under the assumption that $\sup_{j\in\mathbb{N}}\varepsilon_j l_j^{1/q}<\infty$. 
Let us call $C_q:=\sup_{j\in\mathbb{N}}\varepsilon_j l_j^{1/q}$. Notice that 
\[
\|M^{S_j} \chi_{Q_j} \|_{L^{q,\infty}(\tom)} / |Q_j|^{1/q} \leq \varepsilon_j l_j^{1/q} \leq C_q
\]
holds uniformly in $j$. Unfortunately, for $U$ being a proper subset of $Q_j$ the situation is much more difficult.

Fix $q \in (1, \infty)$ and $j\in\mathbb{N}$. Let $\lambda>0$ and $U\subset Q_j$. 
By the disjointness of the sets $Q_j^{(k)}\setminus Q_j$, it happens that $M^{S_j}\chi_U(x)$ is constantly equal to $|Q_j^{(k)}\cap U| / |Q_j^{(k)}|$ for all $x\in Q_j^{(k)} \setminus Q_j$. 
Thus, associated with the given parameter $\lambda$, there exist just $m$ cubes $Q_j^{(k)}$ for which $|Q_j^{(k)}\cap U| / |Q_j^{(k)}| >\lambda$ for some $m\in\{1,\ldots,l_j\}$ (we omit the trivial case $m=0$). So, in view of the above, we have that 
\[
|\{x\in E_j\setminus Q_j:M^{S_j}\chi_U(x)>\lambda \} |= (1-\varepsilon_j)m|Q_j|,
\]
where, one more time, we took into account that $|Q_j^{(k)}|=|Q_j|$ for all $k\in \{1,\ldots,l_j\}$. 
By symmetry, we may assume that the $m$ cubes we just mentioned are in fact the $m$ first cubes $Q_j^{(1)},\ldots, Q_j^{(m)}$ of $S_j$. Since we want to study the quantity
\begin{equation}
\label{eq:cantidad}
\lambda|\{x\in E_j\setminus Q_j:M^{S_j}\chi_U(x)>\lambda \} |^{1/q}/|U|^{1/q}
\end{equation}
and we already know the size of the level set of the maximal function, it just remains to study the quantity $\lambda/|U|^{1/q}$.

Define a partition of $Q_j$ as follows
\[
Q_j = Q_{j,0} \cup \dots \cup Q_{j,m},
\]
where each $Q_{j,l}$ consists of the elements $x \in Q_j$ which belong to exactly $l$ cubes from the family $\{Q_j^{(1)}, \dots, Q_j^{(m)}\}$. 
We have 
\[
|Q_{j,l}| =  \binom{m}{l} \varepsilon_j^l (1-\varepsilon_j)^{m-l} |Q_j| 
\]
for each $l\in\{0,\ldots,m\}$ (see Figure~\ref{fig:cubitos}).

\begin{figure}[h]
    \centering
    
    \begin{minipage}[b]{0.45\textwidth}
    \includegraphics[scale=0.3]{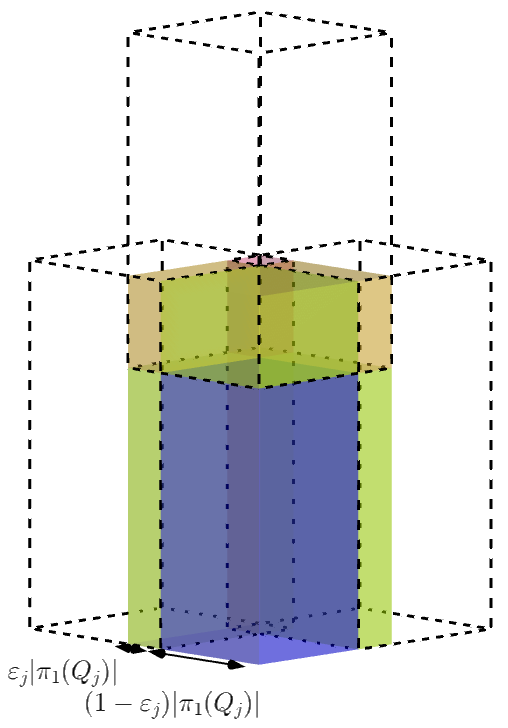}
  
  \end{minipage}
  \
  \begin{minipage}[b]{0.5\textwidth}
     \includegraphics[scale=0.3]{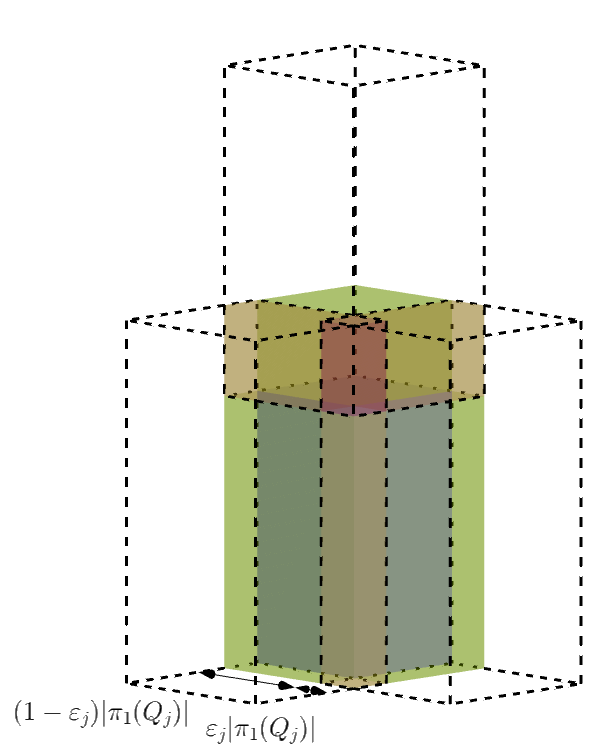}   
  \end{minipage}
    \caption{A generic example (front on the left and back on the right) of a cube $Q_j\in\mathcal{R}_0$ with a $(1/4,3)$-configuration (with dashed lines) wrapping it. In this example, the projection of the cube $Q_j$ to the first three components equals $\widetilde{V}_5$ (the colored region). In blue (at the back) is the set $Q_{j,0}$. The set $Q_{j,1}$ is the green one, $Q_{j,2}$ is the orange one, and $Q_{j,3}$ is the one in red.}
    \label{fig:cubitos}
\end{figure}

 Then, associated with this partition, we consider a random variable $X_{m,j} \colon Q_j\to \{0,1,\ldots, m\}$ defined as $X_{m,j}(w)=l$ if and only if $w\in Q_{j,l}$. This is a binomially distributed random variable with parameters $(m,\varepsilon_j)$, i.e., $X_{m,j}\sim B(m,\varepsilon_j)$, over the underlying probability space $\big(Q_j,\frac{dx}{|Q_j|}\big)$. In other words, the probability  defined by $X_{m,j}$ is  the binomial probability distribution.

Recall that our goal is to estimate \eqref{eq:cantidad} for $\lambda>0$ and $U\subset Q_j$ as above. Since all the set $Q_j^{(k)}$ with $k\in \{1,\ldots,m\}$ satisfy $\frac{|U\cap Q_j^{(k)}|}{|Q_j|}>\lambda$, we have that 
\[
\lambda<m^{-1}\sum_{k=1}^m \frac{|U\cap Q_j^{(k)}|}{|Q_j^{(k)}|}=m^{-1}|Q_j|^{-1}\sum_{k=1}^m \int_{Q_j}\chi_{U\cap Q_j^{(k)}}(x)\, dx,
\]
and, denoting $U_l := U \cap Q_{j,l}$, we get
\[
\begin{split}
\lambda&<m^{-1}|Q_j|^{-1}\sum_{l=0}^{m}\sum_{k=1}^m \int_{Q_j}\chi_{U\cap Q_j^{(k)}}(x)\chi_{Q_{j,l}}(x)\, dx\\
&= m^{-1}|Q_j|^{-1}\sum_{l=0}^{m}\sum_{k=1}^m \int_{Q_j}\chi_{U_l\cap Q_j^{(k)}}(x)\, dx\\
&=m^{-1}|Q_j|^{-1}\sum_{l=0}^{m} l \int_{Q_j}\chi_{U_l}(x)\, dx = |Q_j|^{-1}m^{-1} \sum_{l=0}^m l \cdot |U_l|. 
\end{split}
\]
 Note that the latter quantity is actually the expected value of $X_{m,j}$  over the restricted probability measure $d\mathbb{P}_j$ over $Q_j$, defined as $d\mathbb{P}_j:=\frac{dx}{|Q_j|} $.

In order to get a uniform estimate in $U$, let us just focus on its size. Since $U\subset Q_j$, there is $t\in  (0,1]$ such that $|U|=t|Q_j|$. Then,
\[
\lambda <  m^{-1}|Q_j|^{-1} \sum_{l=0}^m l \cdot |U_l|= m^{-1}\int_U X_{m,j}(x)\, d\mathbb{P}_j(x), 
\]
and we can define 
\begin{equation}\label{eq:Hmeps}
H_{m,\varepsilon_j}(t):=\sup_{U\subset Q_j:|U|=t|Q_j|}\int_U X_{m,j}(x)\, d\mathbb{P}_j(x).
\end{equation}
 Thus, the quantity \eqref{eq:cantidad}  can be bounded by 
\[
\frac{m^{-1} H_{m, \varepsilon_j}(t) \cdot \big( (1-\varepsilon_j) m |Q_j| \big)^{1/q}}{ \big(t |Q_j| \big)^{1/q}} = (1-\varepsilon_j)^{1/q} \, \frac{H_{m, \varepsilon_j}(t)}{t^{1/q} m^{1-1/q}}\le  \frac{H_{m, \varepsilon_j}(t)}{t^{1/q} m^{1-1/q}}, 
\] 
so it remains to show that the above
can be estimated uniformly in $j \in \mathbb{N}$, $m \in \{1, \dots, l_j \}$, and $t \in (0,1]$. 

To remove the dependence on $j$, recall that $\varepsilon_j l_j^{1/q} \leq C_q$ holds uniformly in $j$, which implies $\varepsilon_j \leq C_q / m^{1/q}$, provided that $m \leq l_j$. We let ${X}_m \colon [0,1]\to \{0,1,\ldots,m\}$ be a binomially distributed random variable with parameters $m$ and $\min\{1, C_q / m^{1/q}\}$ and define, by analogy with \eqref{eq:Hmeps},
\[
H_m(t):=\sup_{E\subset [0,1]:|E|=t}\int_E {X}_m(x)\, d\mathbb{P}(x),
\]
where $d\mathbb{P}$ is Lebesgue measure on $[0,1]$. We have that ${X}_m$ is a piecewise function on $[0,1]$, and for the given set  $E$, let us call $E_l={X}_m^{-1}(l)\cap E$. 

Each of the integrals $\int_E {X}_m(x)\, d\mathbb{P}(x)$ is the sum of the areas of all the rectangles  with basis $E_l$ and height $l$ with $l\in \{0,1,\ldots,m\}$. Let us call $X_m^*$ to the decreasing rearrangement of the random variable ${X}_m$, which also follows a binomial distribution $B\left(m,\min\{1,C_q/m^{1/q}\}\right)$, and note that the choice of $E$ that makes the integral $\int_E X_m^*(x)\, d\mathbb{P}(x)$ be the largest among all the sets $E$ with $|E|=t$ is precisely $E=[0,t]$ (see Figure~\ref{fig:rearr}). Then,
\[
H_m(t)= \int_0^t X_m^*(s)\, ds.
\]

Note that the same argument works for the function $H_{m,\varepsilon_j}$, and then we can compute it as
\[
H_{m,\varepsilon_j}(t)=\int_0^t X_{m,j}^*(s)\, ds.
\]

\begin{figure}[h!]
    \centering
    \begin{minipage}[b]{0.4\textwidth}
    \includegraphics[scale=0.6]{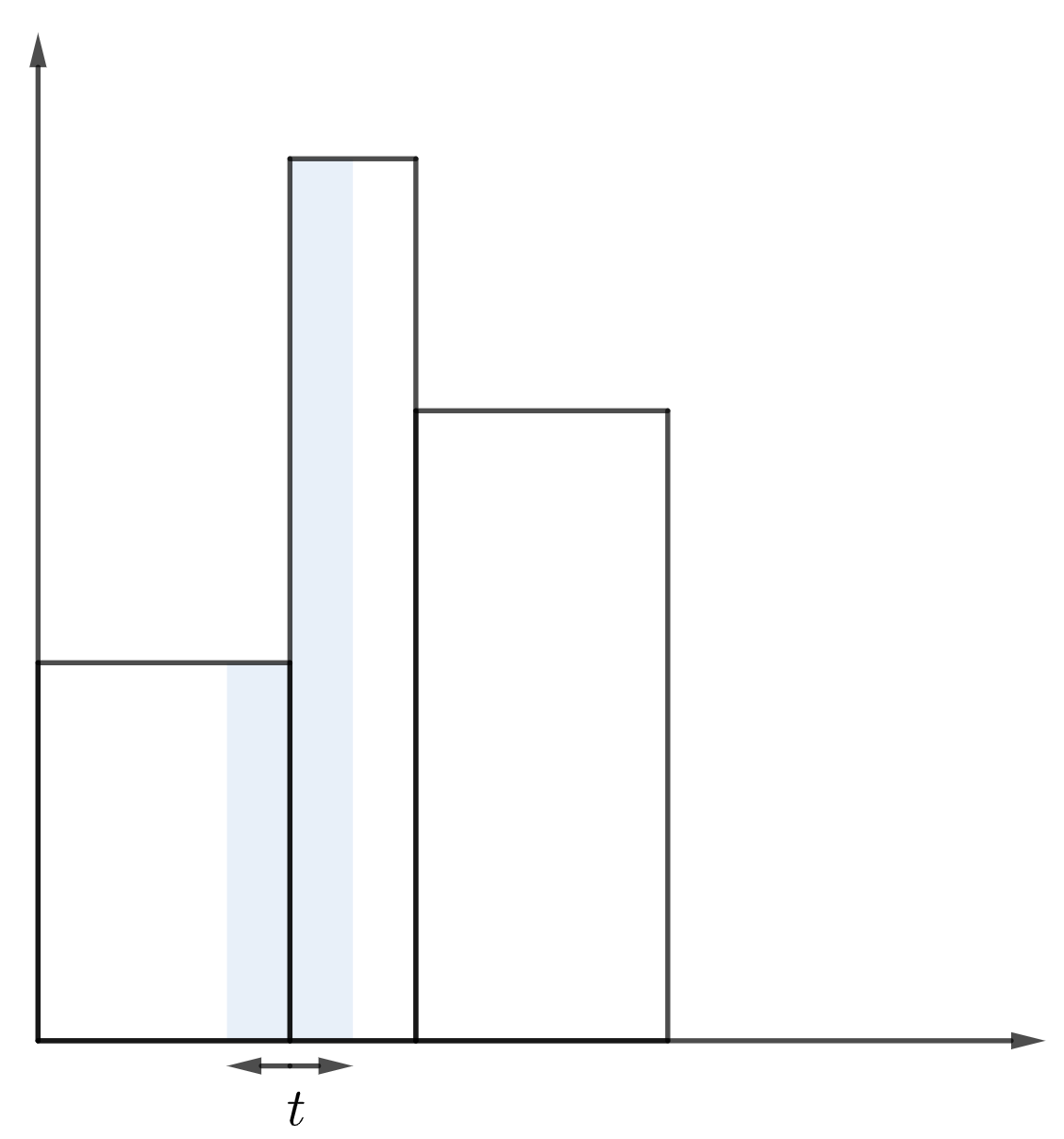}
  \end{minipage}
  \
  \begin{minipage}[b]{0.5\textwidth}
     \includegraphics[scale=0.6]{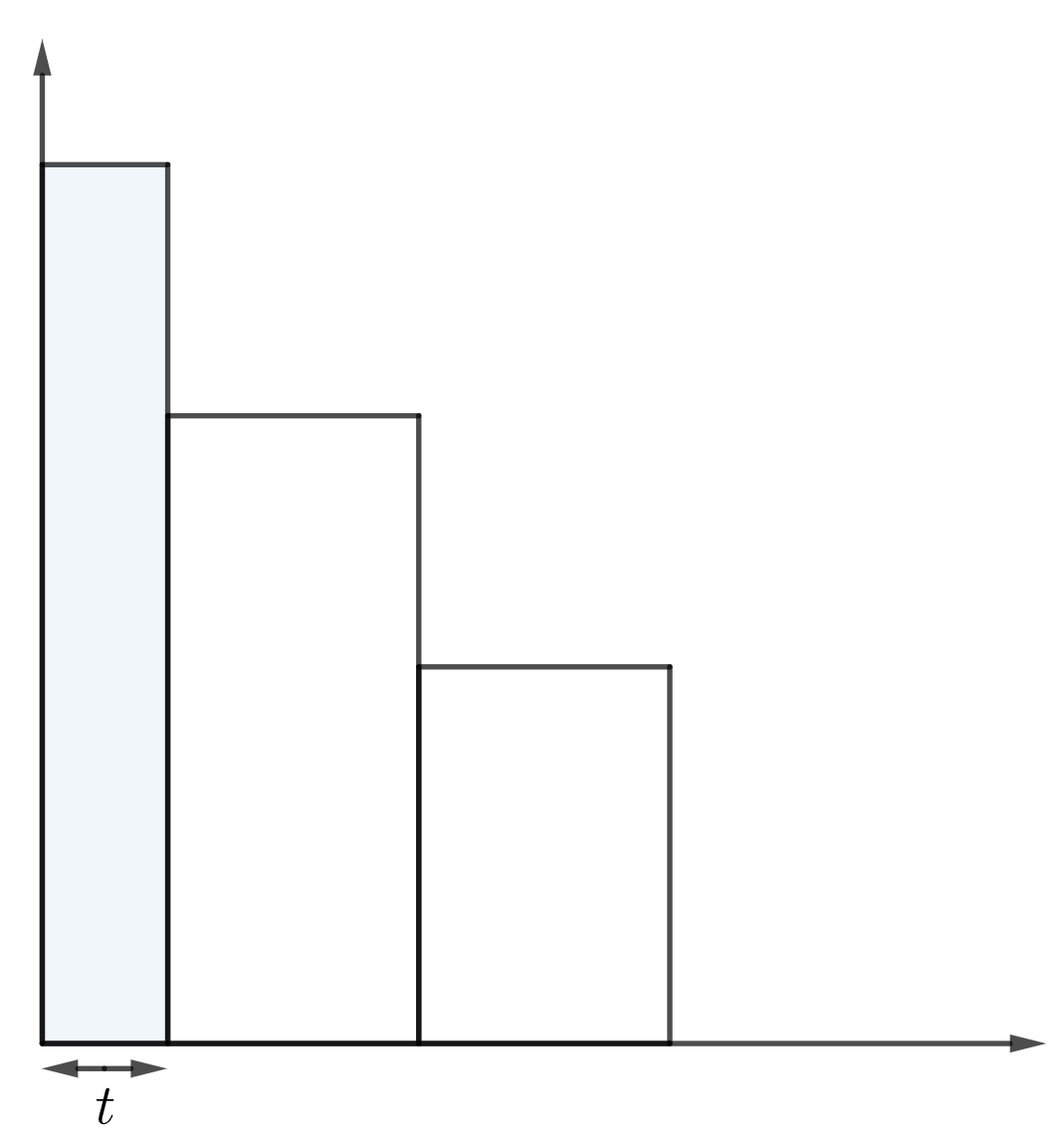}
  \end{minipage}
    \caption{For the step function $X_m$, it holds that the set $E$ with $|E|=t$ that makes $\int_E X_m^*$ biggest possible is precisely $E=[0,t]$. By construction, it happens that $\int_E X_m^* = \sup_{|E|=t}\int_E X_m$. }
    \label{fig:rearr}
\end{figure}

Now, on one hand, by definition,
\begin{align*}
X_{m,j}^*(t)&
=\inf\{s>0: |\{x\in Q_j: X_{m,j}(x)>s\}|\le t|Q_j|\}\\
&=\inf\Big\{s>0: \sum_{l>s}\frac{|Q_{j,l}|}{|Q_j|}\le t\Big\}
\end{align*}
and moreover 
$$
\sum_{l>s}\frac{|Q_{j,l}|}{|Q_j|}=\sum_{l>s} \binom{m}{l} \varepsilon_j^l (1-\varepsilon_j)^{m-l}=1-\mathbb{P}_j(X_{m,j}\leq s).
$$
On the other hand
\begin{align*}
X_{m}^*(t)&
=\inf\{s>0: |\{x\in [0,1]: {X}_{m}(x)>s\}|\le t\}\\
&=\inf\Big\{s>0: \sum_{l>s}|{X}^{-1}_m(l)|\le t\Big\}
\end{align*}
and moreover
\begin{align*}
\sum_{l>s}|{X}^{-1}_m(l)|&=\sum_{l>s} \binom{m}{l} (\min\{1,C_q/m^{1/q}\})^l (1-\min\{1,C_q/m^{1/q}\})^{m-l}\\
&= 1-\mathbb{P}({X}_m\leq s). 
\end{align*}

It is known that the cumulative distribution function of a binomial distribution $X\sim B(n,\beta)$ can be expressed as (see \cite[Chapter~VI~(10.9)]{Feller})
$$
1-\mathbb{P}(X\le k)=n\binom{n-1}{k} \int_0^{\beta}t^k(1-t)^{n-k-1}\,dt.
$$
Therefore, since $\varepsilon_j\le \min\{1,C_q/m^{1/q}\}$, we get 
\[
1-\mathbb{P}_j(X_{m,j}\leq s)\leq 1-\mathbb{P}({X}_m\leq s),
\]
and so $X_{m,j}^*(t)\leq X_m^*(t)$ for all $t\in [0,1]$. Thus,  we have
\[
 H_{m, \varepsilon_j}(t)  \leq H_{m}(t),\qquad t\in[0,1].
\]

Hence, we are left with the functions $F_m(t) := \frac{H_m(t)}{t^{1/q} m^{1-1/q}}$ and we want to estimate them uniformly in $m \in \mathbb{N}$ and $t \in (0,1]$. In view of the trivial estimate $H_m(t) \leq mt$, we may assume the case $m \geq m_0$, where $m_0$ is some large integer possibly depending on $q$. 
We first show that for large values of $q$ the claim follows from Chebyshev's inequality. More precisely, we assume that $q \geq \varphi$, where $\varphi = \frac{\sqrt{5} + 1}{2}$ is the constant representing the so-called golden ratio, i.e., $\varphi$ is the unique positive solution of the quadratic equation $x^2 - x - 1 = 0$. 

Choose $m_0$ to be the smallest $m \in \mathbb{N}$ satisfying $C_q / m^{1/q} \leq 1$ and $C_q m^{1-1/q} \geq 1$. For fixed $m \geq m_0$ the function $F_m \colon (0,1] \to (0, \infty)$ is continuous on $(0,1]$ and $\lim_{t \rightarrow 0} F_m(t) = 0$. Moreover, observe that $H_m$ is piecewise linear. In fact, since $X_m^*$ is a decreasing function, we know that the sets $\{t\in[0,1]:X_m^*(t)\geq l\}$, $l\in\{0,1,\ldots,m\}$, are of the form $[0,\alpha_{m,l}]$ for some $0<\alpha_{m,l}<1$. Then, for $\alpha_{m,l+1} < t <\alpha_{m,l}$, we have
\[
\begin{split}
H_m(t)&=\mathbb{P}(X_m^*\geq l+1)+\int_{\alpha_{m,l+1}}^t X_m^*(s)ds\\
&=\mathbb{P}(X_m^*\geq l+1)+(t-\alpha_{m,l+1})l\\
&=\alpha_{m,l+1}+(t-\alpha_{m,l+1})l, 
\end{split}
\]
and so  $H_m'(t) = l$ for these values of $t$.  This implies that   $F_m$ is a.e. differentiable and $F_m'$ is integrable. Indeed, this is because
\begin{align*}
   F_m'(t)&= \frac{H_m'(t)  - H_m(t) q^{-1} t^{-1} }{t^{1/q} m^{1-1/q}}\\
   &=\frac{1}{t^{1/q}m^{1-1/q}}\sum_{l=0}^{m-1}\Big[ l-\alpha_{m,l+1}\frac{l-1}{qt}\Big]\chi_{(\alpha_{m,l+1},\alpha_{m,l})}(t),
\end{align*}
 for every $t\in [0,1]$ which implies that we have no singularity at $0$, since $\alpha_{m,m}=0$.
Consequently,
\[
F_m(t) =\int_{0}^t F_m'(s) ds= \int_{0}^t \frac{H_m'(s) s^{1/q} - H_m(s) q^{-1} s^{-1+1/q} }{s^{2/q} m^{1-1/q}} ds \leq \int_{0}^1 \frac{H_m'(s) }{s^{1/q} m^{1-1/q}} ds.
\]
We compute the last quantity
\begin{align}\label{eq:Fm}
\begin{split}
   \int_{0}^1 \frac{H_m'(s) }{s^{1/q} m^{1-1/q}} ds 
    & = \frac{q}{q-1} \frac{1}{m^{1-1/q}}\sum_{l=0}^m l \cdot \Big (\alpha_{m,l}^{1-1/q} - \alpha_{m,l+1}^{1-1/q} \Big) \\
    & = \frac{q}{q-1} \frac{1}{m^{1-1/q}}\sum_{l=0}^m \alpha_{m,l}^{1-1/q}.
    \end{split}
\end{align}
We will finish if we estimate the sum above by a constant multiple of $m^{1-1/q}$.

Note that 
\[
\mathbb{E}(X_m^*) = C_q m^{1-1/q} \quad \text{and} \quad \text{Var}(X_m^*) = C_q m^{1-1/q} (1-C_q/m^{1/q}) \leq C_q m^{1-1/q},
\]
and recall Chebyshev's inequality 
\[
\mathbb{P} \Big( \big|X_m^* - \mathbb{E}(X_m^*) \big| \ge n \sqrt{\text{Var}(X_m^*)} \Big) \leq \frac{1}{n^2}, \qquad n \in \mathbb{N}.
\]
Observe that, in view of the above, the value $l=\big \lfloor C_qm^{1-1/q} + \sqrt{C_qm^{1-1/q}} \big \rfloor$ is the highest value of $X_m^*$ for which an application of Chebyshev's inequality gives at best the trivial estimate:
\[
\alpha_{m,l}=\mathbb{P}(X_m^*\geq l) \leq 1.
\]
For the remaining values of $X_m^*$ Chebyshev's inequality gives better estimates for the corresponding quantities $\alpha_{m,l}$, and then we have
\begin{align} \label{eq:secondmoment}
\begin{split}
\sum_{l=0}^m \alpha_{m,l}^{1-1/q}
= & \ \sum_{l = 0}^{\big \lfloor C_qm^{1-1/q} + \sqrt{C_qm^{1-1/q}} \big \rfloor } \mathbb{P}(X_m^*\geq l)^{1-1/q} \\
& + \sum_{n \in \mathbb{N}} \sum_{l = \big \lfloor C_qm^{1-1/q} + n \sqrt{C_qm^{1-1/q}} \big \rfloor + 1}^{\big \lfloor C_qm^{1-1/q} + (n+1) \sqrt{C_qm^{1-1/q}} \big \rfloor} \mathbb{P}(X_m^*\geq l)^{1-1/q}.
\end{split} 
\end{align}
For the first sum, we bound each term by $1$. For the second sum, Chebyshev's estimate is used to bound each term in the inner sum by $n^{-2+2/q}$. Furthermore, we take into account the fact that $ \mathbb{P}(X_m^*\geq l)=0$ for each $l > m$, which implies that the outer sum is not actually an infinite sum, but a finite sum from $n=1$ to a value of $n$ such that $ n \sqrt{C_qm^{1-1/q}}  \geq m$ (and therefore $\big \lfloor C_qm^{1-1/q} + n \sqrt{C_qm^{1-1/q}} \big \rfloor \geq m$), that is, from $n=1$ to $n = \big\lfloor \sqrt{C_q^{-1}m^{1+1/q}} \big\rfloor$. Thus, we get the estimate
\begin{align} \label{eq:secondmomentb}
\begin{split}
\sum_{l=0}^m \alpha_{m,l}^{1-1/q}
 & \leq  \ 3C_q m^{1-1/q} +  2 \sqrt{C_qm^{1-1/q}} \sum_{n = 1}^{\big \lfloor \sqrt{C_q^{-1} m^{1+1/q}} \big\rfloor}n^{-2+2/q}.
\end{split} 
\end{align}
Note that the second term in the above sum can be estimated by 
\begin{multline*}
 2 \sqrt{C_qm^{1-1/q}}\Big(1+\int_1^{\lfloor \sqrt{C_q^{-1} m^{1+1/q}} \rfloor}x^{-2+2/q}\,dx\Big)\\
 \leq C(q) \sqrt{C_qm^{1-1/q}}+ m^{\frac{1}{2} - \frac{1}{2q} } m^{(-1+\frac{2}{q})(\frac{1}{2} + \frac{1}{2q})}
\end{multline*}
for some suitable constant $C(q) < \infty$.
For $q \geq \varphi$ we have $\frac{1}{2} - \frac{1}{2q} + (-1+\frac{2}{q})(\frac{1}{2} + \frac{1}{2q}) \leq 1-\frac{1}{q}$, which implies that the last sum can be estimated by a constant multiple of $m^{1-1/q}$. Then we got the desired estimate for the sum in \eqref{eq:Fm}, and we conclude that, in this case, $F_m(t)$ is bounded uniformly in $m$ and $t$. This gives that $M^{\mathcal{R}'}$ is indeed of restricted weak type $(q,q)$. 

To obtain the claim in the remaining range $q \in (1, \varphi)$, 
we have to recall the estimates for higher order moments of binomial random variables. Given $q \in (1, \varphi)$ we choose $R \in \mathbb{N}$ such that $R-R/q > 1$. Moreover, we can take $m_0$ so large that for $m \geq m_0$ the inequalities $m\ge R$ and $R / (C_q m^{1-1/q}) < e$ are fulfilled. Thus, using the bounds by Lata\l{}a~\cite{La1997} (see also \cite[Proposition 11]{Ja2019}) one can show that
\[
\big(\mathbb{E}((X_m^*)^{R}) \big)^{1/R} \leq C_{q,R} m^{1 - 1/q}, \qquad m \in \mathbb{N},
\]  
holds for some suitable constant $C_{q,R} < \infty$ (for the proof see \cite[Proposition 11]{Ja2019}). 
Thus, by Markov's inequality stated for $(X_m^{*})^R$,
\[
\mathbb{P} \Big( (X_m^{*})^R \ge (n C_{q,R} m^{1 - 1/q})^{R} \Big) \leq \frac{1}{n^{R}}, \qquad n \in \mathbb{N}.
\]
We can also assume that $m$ is so large that $C_{q,R} m^{1 - 1/q} \geq 1$, so we can argue as in the previous case to get
\begin{align*} 
\sum_{l=0}^m \alpha_{m,l}^{1-1/q} & =  \sum_{l = 0}^{\lfloor C_{q,R} m^{1 - 1/q} \rfloor} \mathbb{P}((X_m^{*})^R\geq l^R)^{1-1/q} \\
&\quad + 
\sum_{n \in \mathbb{N}} \sum_{l = \lfloor n C_{q,R} m^{1 - 1/q} \rfloor + 1}^{\lfloor (n+1) C_{q,R} m^{1 - 1/q} \rfloor} \mathbb{P}((X_m^{*})^R\geq l^R)^{1-1/q} \\
& \leq 2C_{q,R} m^{1-1/q} + \sum_{n = 1}^{\infty} 2 C_{q,R} m^{1-1/q} n^{-R+R/q}.
\end{align*}
The last quantity is bounded by a constant multiple of $m^{1-1/q}$, so we again got the desired estimate for the sum in \eqref{eq:Fm}, and the proof is completed.
\end{proof}

\begin{proof}[Proof of Corollary~\ref{corollary2}] 
 
{
We prove first item (1), namely the unboundedness in the case $q \in [1, q_0)$. To that end, we take a family of cubes $\{Q_j\}_{j\in\mathbb{N}}\subset \mathcal{R}$ and   $(\varepsilon_j, l_j)$-configurations around the cubes $Q_j$ as in Example~\ref{ex:epsn}, with elements contained in disjoint sets $E_j$ and satisfying $|Q_j \cap Q_j^{(k)}| = (1 - \varepsilon_j) |Q_j|$, where $\varepsilon_j = \frac{1}{j+1}$ and $l_j = \lfloor j^{q_0} \rfloor$ if $q_0 \in (1, \infty)$. 

For item (2) we will prove the unboundedness in the case $q \in [1, q_0]$. We use $\varepsilon_j$ as before and $l_j = \lfloor \log(j+2) j^{q_0} \rfloor$ (the $\log$ term ensures unboundedness when $q=q_0$).

In any of the above cases, an application of Theorem~\ref{thm:r_0} gives the result.
}
\end{proof}
\subsection{Differentiation properties of bases}
\label{sub:differentiation}

A natural parallel problem is to study differentiability properties of bases $\mathcal{R}_0\subset \mathcal{R}'\subset \mathcal{R}$. 

\begin{definition}(\cite[Section~6.1]{bruckner}, \cite[Ch.~2]{guzmanyellow}.)
For every $y\in \tom$ let $\mathcal{B}(y)$ be a collection of measurable sets of positive measure that contain (or whose topological closures contain) the point $y$. If $\{S_n\}_n\subset \mathcal{B}(y)$ and $\delta(S_n)\to 0$, then we say that a sequence $S_n$ \textit{contracts to} $y$, and write $S_n\Rightarrow y$. 
Suppose that for each $y \in \tom$ there exists a sequence $\{S_n\}_n\subset \mathcal{B}(y)$ such that $S_n\Rightarrow y$. Let $\mathcal{B}:=\cup_{y\in\tom}\mathcal{B}(y)$. We call $(\mathcal{B},\Rightarrow)$ a \textit{differentiation basis}.
\end{definition}

Let $(\mathcal{B},\Rightarrow)$ be a differentiation basis in $\tom$. Given $f\in L^1(\tom)$, we define the \emph{upper and lower derivative} of $\int f$ with respect to $\mathcal{B}$ (and the Haar measure $dx$) in the point $x\in\tom$ by (without loss of generality we assume here that $f$ is a real function)
$$
\overline{D}\Big( \int f,x \Big)=\sup_{\substack{\{B_n\}\subset\mathcal{B}\\B_n\Rightarrow x }}\Big\{ \limsup_{n} 
f_{B_n}\Big\}
\quad\text{ and }\quad
\underline{D}\Big( \int f,x \Big)=\inf_{\substack{\{B_n\}\subset\mathcal{B}\\B_n\Rightarrow x }}\Big\{ \liminf_{n} f_{B_n}\Big\},
$$
respectively. When,  {for a set $A$},
\begin{equation}\label{Bdiferintf}
\overline{D}\Big(\int f,x \Big)=\underline{D}\Big( \int f,x \Big)
=f(x)\quad \text{ a.e. } {x\in A}
\end{equation}
holds, we write $D\big(\int f,x \big)=f(x)$ and say that the basis $\mathcal{B}$ \emph{differentiates} $\int f$  {in $A$} and that the \emph{derivative} of $\int f$ is $f$. A necessary condition for \eqref{Bdiferintf} is that 
\begin{equation*}
\lim_{n\in\mathbb{N}} f_{B_n} =f(x)\quad \text{ a.e. }
\end{equation*}
holds, for every sequence $\{B_n\}_{n\in\mathbb{N}}\subset  \mathcal{B}$ such that $B_n\Rightarrow x$. When \eqref{Bdiferintf} is satisfied for all $f\in L^\infty(\tom)$ (resp. $f\in L^1(\tom)$), we say that $\mathcal{B}$ \emph{differentiates} $L^\infty(\tom)$ (resp. $L^1(\tom)$). 
Note that $L^\infty(\tom)\subset L^1(\tom)$ and thus, if the basis $\mathcal{B}$ \emph{does not} differentiate $L^\infty(\tom)$, then it also does not differentiate  $L^1(\tom)$.

 In the classical context of $\mathbb{R}^n$, for a homothecy invariant basis $\mathcal{B}$, differentiation of $L^1$ is equivalent to weak $(1,1)$ boundedness of $M^\mathcal{B}$. See \cite[Thm.~1.1]{guzmanywelland}.

In our context, the following result holds. Its proof is standard, so we omit it.
\begin{theorem}\label{thm:weak->diff}
Let $\mathcal{B}$ be a differentiation basis in $\mathbb{T}^\omega$. If the operator $M^{\mathcal{B}}$ is of weak type $(1,1)$, then the basis  $\mathcal{B}$ does differentiate $L^1(\mathbb{T}^\omega)$.
\end{theorem}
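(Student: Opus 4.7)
The plan is to carry out a standard Hardy--Littlewood--Wiener style reduction: handle continuous functions directly using the compactness of $\tom$, and then use the weak $(1,1)$ bound on $M^{\mathcal{B}}$ to absorb the remainder after an $L^1$ approximation.

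First I would dispose of the continuous case. Since $(\tom, d)$ is a compact metric space (with the metric $d$ from \eqref{Saks}), every $g \in C(\tom)$ is uniformly continuous. Fix $x \in \tom$ and any sequence $\{B_n\}_n \subset \mathcal{B}(x)$ with $B_n \Rightarrow x$. Because $x \in \overline{B_n}$, any $y \in B_n$ satisfies $d(x,y) \leq \delta(B_n)$, so $\sup_{y \in B_n} d(x,y) \to 0$. Uniform continuity then gives $|g_{B_n} - g(x)| \leq \sup_{y \in B_n}|g(y) - g(x)| \to 0$. Taking the supremum and infimum over such sequences yields
$$
\overline{D}\Big(\int g, x\Big) = \underline{D}\Big(\int g, x\Big) = g(x) \quad \text{for every } x \in \tom.
$$

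Next I would pass to an arbitrary $f \in L^1(\tom)$ by density. Given $\eta > 0$, pick $g \in C(\tom)$ with $\|f-g\|_{L^1(\tom)} < \eta$ and set $h = f-g$. Since along any sequence $B_n \Rightarrow x$ the averages $g_{B_n}$ converge to $g(x)$, for every $x \in \tom$ one obtains
$$
\overline{D}\Big(\int f,x\Big) = g(x) + \overline{D}\Big(\int h,x\Big), \qquad \underline{D}\Big(\int f,x\Big) = g(x) + \underline{D}\Big(\int h,x\Big).
$$
Using $|h_{B_n}| \leq |h|_{B_n} \leq M^{\mathcal{B}}h(x)$ whenever $x \in B_n$ (or $x \in \overline{B_n}$, upon the obvious extension of $M^{\mathcal{B}}$ to closures, which does not change its weak $(1,1)$ behavior), one deduces $|\overline{D}(\int h,x)|, |\underline{D}(\int h,x)| \leq M^{\mathcal{B}}h(x)$, and therefore
$$
\max\Big\{\Big|\overline{D}\Big(\textstyle\int f,x\Big) - f(x)\Big|,\; \Big|\underline{D}\Big(\textstyle\int f,x\Big) - f(x)\Big|\Big\} \leq M^{\mathcal{B}}h(x) + |h(x)|.
$$

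Finally I would invoke the weak $(1,1)$ hypothesis. For each $\lambda > 0$, combining the weak $(1,1)$ bound for $M^{\mathcal{B}}$ with Chebyshev's inequality for $|h|$ gives
$$
\Big|\Big\{ x \in \tom : M^{\mathcal{B}}h(x) + |h(x)| > \lambda\Big\}\Big| \leq \frac{(C+1)\eta}{\lambda},
$$
where $C$ depends only on $\mathcal{B}$. Letting $\eta \to 0$ shows that, for each fixed $\lambda > 0$, the set where either $|\overline{D}(\int f,x) - f(x)|$ or $|\underline{D}(\int f,x) - f(x)|$ exceeds $\lambda$ has measure zero, and intersecting over $\lambda = 1/k$, $k \in \mathbb{N}$, concludes that $\mathcal{B}$ differentiates $\int f$ almost everywhere. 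I do not anticipate a genuine obstacle; the only step requiring a word of justification is the density of $C(\tom)$ in $L^1(\tom)$, which is classical (for instance, trigonometric polynomials are uniformly dense in $C(\tom)$ by Stone--Weierstrass and hence $L^1$-dense by regularity of the Haar measure).
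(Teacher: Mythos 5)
The paper explicitly declines to give a proof of Theorem~\ref{thm:weak->diff}, remarking only that ``Its proof is standard, so we omit it.'' Your proof is precisely that standard Hardy--Littlewood--Wiener argument: dispose of $C(\tom)$ via uniform continuity on the compact metric space, approximate in $L^1$, and absorb the error term with the weak $(1,1)$ bound on the maximal operator. The decomposition identities $\overline{D}(\int f,x)=g(x)+\overline{D}(\int h,x)$ and $\underline{D}(\int f,x)=g(x)+\underline{D}(\int h,x)$ are used correctly (they hold exactly because $g_{B_n}\to g(x)$ along every admissible contracting sequence), the weak-type plus Chebyshev estimate is right, and taking $\eta\to 0$ for each fixed $\lambda$ and then running over $\lambda=1/k$ correctly yields a.e.\ differentiation. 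The density of $C(\tom)$ in $L^1(\tom)$ is indeed classical, as you note.

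The only point deserving an explicit word of caution is the parenthetical remark that replacing the condition $x\in Q$ in the definition of $M^{\mathcal{B}}$ by $x\in\overline{Q}$ ``does not change its weak $(1,1)$ behavior.'' That is not automatic for an arbitrary differentiation basis; it needs, say, that $|\overline{Q}\setminus Q|=0$ for $Q\in\mathcal{B}$, so that the two maximal functions agree a.e. For the bases of actual interest in the paper --- $\mathcal{R}_0$, the intermediate $\mathcal{R}'$, and $\mathcal{R}$, all built from translates of the open fundamental domains $V_m$ --- the boundaries are Haar-null, so the claim is valid, and this is plainly the implicit convention behind the paper's ``standard'' label. If you want the statement to hold literally for every differentiation basis in the sense of the paper's definition (which allows sets merely whose closures contain $y$), you should either add that boundary-nullity hypothesis or instead restrict the contracting sequences to sets with $x\in B_n$, which suffices to prove the a.e.\ differentiation statement. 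In all other respects your argument coincides with what the paper has in mind.
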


Note that the previous theorem is only one half of the version in $\mathbb{R}^n$. This is in general the best possible result, since
the first author \cite[Proposition~4.1]{Kosz2020} found a basis $\mathcal{D}$ for which the associated maximal operator $M^\mathcal{D}$ is not of weak type $(1,1)$ but it still differentiates $L^1(\tom)$. Nevertheless, this basis $\mathcal{D}$ is not contained in $\mathcal{R}$. Our results in Theorems~\ref{unweighted_unboundedness}~and~\ref{thm:r_0} provide an example of a basis $\mathcal{R}'$ satisfying $\mathcal{R}_0\subset\mathcal{R}'\subset \mathcal{R}$ and such that $M^{\mathcal{R}'}$ is not of weak type $(1,1)$. An immediate question is in order: does $\mathcal{R}'$ differentiate $L^1(\tom)$? In this regard, we can parallelly ask about the threshold differentiation basis.
 
In the following discussion, we will check that, within the bases $\mathcal{R}'$, we may find examples of differentiation bases and of bases which do not differentiate $L^1(\tom)$. We restrict our attention to the case of configurations described in Example~\ref{ex:epsn}, i.e., configurations  {$S_j$} around some cubes $Q_j$ which we specify below. 
 
On the one hand, observe that for each $j$, all the elements of a configuration $S_j$ around a cube $Q_j$, together with $Q_j$ itself, are contained in a set $E_j \subset \tom$ of size $(l_j+1) |Q_j|$  {(see Figure \ref{fig:el-conf})}. If the sequence $\{Q_j\}_j$  satisfies  $\sum_{j} (l_j+1) |Q_j| < \infty$ (in particular, this happens if the sequence of sizelevels $\{m_{Q_j}\}_{j \in \mathbb{N}}$ is strictly increasing, since $l_j \leq \ell(m_{Q_j}) \leq m_{Q_j}$), then a similar strategy as in \cite[Proposition 4.1]{Kosz2020} can be used to show that  {the basis} $\mathcal{R}'$  {defined as $\mathcal{R}':=\mathcal{R}_0\cup S$, where $S=\bigcup_{j\ge1}S_j$,}
differentiates $L^1(\tom)$. Indeed, for each $\delta>0$ find $J \in \mathbb{N}$ such that $|\bigcup_{j > J} E_j| < \delta$.

If $f \in L^1(\tom)$, then the basis $\mathcal{R}'$ differentiates $f$ in $( \bigcup_{j > J} E_j )^c$. This can be seen as follows: for a given $x\in ( \bigcup_{j > J} E_j )^c$, the sets from $S$ with $j > J$ are not taken into account in checking the differentiation condition. On the other hand, the remaining elements of $\mathcal{R}'$ belong to a family of the form $\mathcal{R}_0\cup F$ with $F$ being a finite collection of cubes. Hence, an application of Lemma \ref{lematamanos} gives us the weak type $(1,1)$ bound for the associated maximal operator. Finally, Theorem \ref{thm:weak->diff} implies good differentiation properties for $\mathcal{R}'$. Since $\delta > 0$ was arbitrary, we are done.

 On the other hand, nothing can be said in general if the condition $m_{Q_{j+1}} > m_{Q_{j}}$ is not satisfied. If $\sharp(\{ j \in \mathbb{N} : m_{Q_{j}}=n \})$ grows like $n^{\alpha}$ for some $\alpha >0$, then we have $\sum_{j} (l_j+1) |Q_j| < \infty$, as previously. However, for $m_{Q_{j}}$ growing to infinity very slowly it may happen that $\mathcal{R}'$ does not differentiate $L^1(\tom)$.
 
 We have an example from \cite[Proposition 3.2]{Kosz2020}, where it is shown that $\mathcal{R}$ does not differentiate $L^1(\tom)$ by finding $\widetilde{f} \in L^1(\tom)$, $\widetilde{f} \geq 0$, and $E \subset \tom$, $|E| > 0$, such that for each $x \in E$ we have $\limsup_{n \rightarrow \infty} \widetilde{f}_{P_n} > \widetilde{f}(x)$ for some $x \in P_n \in \mathcal{R}$, $|P_n| \rightarrow 0$. Moreover, careful reading of the proof of \cite[Proposition 3.2]{Kosz2020} reveals that all the sets $P_n$ can be selected from a countable subset of $\mathcal{R}_0$, say $\mathcal{P}$. Thus, one can  arrange the cubes added to $\mathcal{R}_0$ in such a way that $\mathcal{R}' \supset \mathcal{P}$, making the differentiation condition impossible to hold. Indeed, assuming $\mathcal{P}=\{P_1, P_2, \dots\}$ one may choose arbitrary configurations $S_j$, $j \in \mathbb{N}$, such that $P_j \in S_j$, and the postulated inclusion is then satisfied.

\section{Weighted setting} \label{sec:pesos}

As explained in Section~\ref{sec:Intro}, in view of the negative results concerning the $L^q$ boundedness of the maximal function associated with $\mathcal{R}$, it seems natural to  introduce weights and study weighted boundedness properties of the corresponding maximal function. The previous results motivate to study maximal functions related to bases $\mathcal{R'}$ described in Theorem~\ref{thm:r_0}, for which the unweighted unboundedness is characterized. However, we opted to focus on the more tractable basis $\mathcal{R}$, as a first step to understand the situation.

In the Euclidean setting, the maximal operator defined with respect to the basis consisting of all cubes in $\mathbb{R}^n$ is well understood in the light of the theory of weights. Beyond the Euclidean setting,  Duoandikoetxea, Mart\'in-Reyes and Ombrosi \cite{Duoandikoetxea2001} showed that, in general, several {\it a priori} different classes of weights can be defined in relation with a given basis $\mathcal{B}$ and that there are alternative characterizations of some of such classes of weights. In this regard, recall the classes of weights introduced in Definition~\ref{def:weights1}. We have the following.

\begin{theorem}[{cf.\ \cite[Theorem~3.1]{Duoandikoetxea2016}}]\label{teo:weights2}
Let $w\in L^1(\tom)$ be a weight and let $\mathcal{B}$ be a basis of sets in $\tom$. 
\begin{enumerate}
\item The weight $w$ is a Muckenhoupt weight with respect to $\mathcal{B}$ if and only if there are constants $\rho,C>0$ such that for every $Q\in\mathcal{B}$ and every measurable subset $E\subset Q$ it holds that
\[
\frac{|E|}{|Q|}\leq C\left(\frac{w(E)}{w(Q)}\right)^\rho.
\]
Moreover, if $w\in A_p^\mathcal{B}$ for some $p \in (1, \infty)$, then $\rho$ can be taken to be $1/p$.
\item The weight $w$ satisfies a reverse H\"older inequality with respect to $\mathcal{B}$ if and only if there are constants $\delta,C>0$ such that for every $Q\in\mathcal{B}$ and every measurable subset $E\subset Q$ it holds that
\[
\frac{w(E)}{w(Q)}\leq C\left(\frac{|E|}{|Q|}\right)^\delta.
\]
Moreover, if $w\in \mathrm{RH}_r^\mathcal{B}$ for some $r \in (1, \infty)$, then $\delta$ can be taken to be $1/r'$.
 
\end{enumerate}
\end{theorem}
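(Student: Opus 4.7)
The plan is to prove both characterizations by noting that the forward implications (and the sharp exponents $\rho=1/p$ and $\delta=1/r'$) follow from a single application of H\"older's inequality, combined with the defining inequality ($A_p$ or $\mathrm{RH}_r$) on the enclosing set $Q$; the reverse implications are more delicate but standard, requiring a self-improvement argument to extract membership in $A_p$ or $\mathrm{RH}_r$ for some finite exponent from the scale-invariant hypothesis.

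For the forward direction of (1), I would write, for any measurable $E \subset Q$ with $Q \in \mathcal{B}$,
\[
|E| \;=\; \int_E w^{1/p}\, w^{-1/p} \, dx \;\leq\; \Big(\int_E w\, dx\Big)^{1/p} \Big(\int_E w^{1-p'} \, dx\Big)^{1/p'}
\]
by H\"older's inequality with exponents $p$ and $p'$. Then I would enlarge the second factor using $E \subset Q$ and apply the $A_p^\mathcal{B}$ condition \eqref{Ap}, which gives $\int_Q w^{1-p'}\,dx \leq [w]_{A_p^\mathcal{B}}^{1/(p-1)} |Q|^{p'} w(Q)^{-1/(p-1)}$. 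Substituting and raising to the appropriate power yields
\[
|E| \;\leq\; [w]_{A_p^\mathcal{B}}^{1/p} \; w(E)^{1/p} \; \frac{|Q|}{w(Q)^{1/p}},
\]
which is the desired inequality with $\rho = 1/p$ and $C=[w]_{A_p^\mathcal{B}}^{1/p}$.

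The forward direction of (2) is dual. Here I would use H\"older with exponents $r$ and $r'$:
\[
w(E) \;=\; \int_E w \cdot \chi_E \, dx \;\leq\; \Big(\int_E w^r \, dx \Big)^{1/r} |E|^{1/r'} \;\leq\; \Big(\int_Q w^r\, dx\Big)^{1/r} |E|^{1/r'}.
\]
The reverse H\"older inequality \eqref{RHr} bounds the first factor by $[w]_{\mathrm{RH}_r^\mathcal{B}}\, |Q|^{1/r-1}\, w(Q)$, producing $w(E) \leq [w]_{\mathrm{RH}_r^\mathcal{B}}\, w(Q)\, (|E|/|Q|)^{1/r'}$, which is the claim with $\delta = 1/r'$ and $C = [w]_{\mathrm{RH}_r^\mathcal{B}}$.

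For the reverse implications the approach I have in mind is the classical one: the hypothesis of (1) implies a quantitative $A_\infty$-type condition, from which one produces a $p$ with $w \in A_p^\mathcal{B}$ by bounding $\int_Q w^{1-p'}$ through a layer-cake decomposition over the level sets $\{w > \lambda\} \cap Q$ and summing a geometric series; similarly, the hypothesis of (2) gives exponential decay of $|\{x \in Q : w(x) > \lambda w_Q\}|/|Q|$, which upgrades (again via layer-cake integration) to a reverse H\"older inequality with some $r > 1$. The main obstacle here is that the usual Euclidean proofs of such self-improvement rely on Calder\'on--Zygmund or covering lemmas adapted to the basis, which in $\tom$ we cannot take for granted; however, because the hypotheses are stated in terms of arbitrary measurable subsets $E \subset Q$ of a single $Q \in \mathcal{B}$, a purely measure-theoretic exhaustion argument inside each $Q$ suffices, avoiding any geometric decomposition of $\tom$. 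This is the same route taken in \cite[Theorem~3.1]{Duoandikoetxea2016} in an abstract framework that covers the present setting, so I would simply invoke that result for the converses and record the explicit constants $\rho = 1/p$ and $\delta = 1/r'$ from the direct H\"older computation above.
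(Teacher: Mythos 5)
Your proposal is correct and takes essentially the same approach as the paper: the paper itself offers no proof, simply observing that the result is a particular case of Theorem~3.1 of Duoandikoetxea--Mart\'in-Reyes--Ombrosi, and you invoke the same reference for the delicate converse implications. The extra detail you supply for the forward directions (the H\"older computations giving $\rho = 1/p$ and $\delta = 1/r'$, and the observation that the converses require only measure-theoretic arguments within a single $Q$, so no covering structure on $\tom$ is needed) is accurate and in fact makes explicit why the cited abstract theorem applies to general bases in $\tom$.
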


This result is a particular case of \cite[Theorem~3.1]{Duoandikoetxea2016}. Also, when having at hand a local Calder\'on--Zygmund decomposition consisting of base sets of $\mathcal{B}$, it is a standard fact that the classes of Muckenhoupt weights and reverse H\"older weights are the same, see \cite[Chapter~7]{Duoandikoetxea2001}.  This is the case for the Rubio de Francia basis $\mathcal{R}$. Indeed, due to the good structure of the restricted Rubio de Francia basis  {$\mathcal{R}_0$} it is possible to prove, by using standard arguments, a decomposition of Calder\'on--Zygmund type localized to a given $Q\in \mathcal{R}$, by using a dyadic decomposition of $Q$ which we can refer to as $\mathcal{R}_0(Q)$ (for the details see \cite[Theorem~9]{Fernandez-Roncal2020}). Thus, again with standard arguments, we can conclude the following.

\begin{corollary}\label{cor:RH=Ap}
The class of Muckenhoupt weights with respect to $\mathcal{R}$ and the class of reverse H\"older weights with respect to $\mathcal{R}$ coincide.
\end{corollary}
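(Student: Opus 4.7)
The plan is to combine Theorem~\ref{teo:weights2} with the localized Calder\'on--Zygmund decomposition $\mathcal{R}_0(Q)$ discussed in the paragraph preceding the corollary. By Theorem~\ref{teo:weights2}, the Muckenhoupt class $\bigcup_{p>1}A_p^{\mathcal{R}}(\tom)$ coincides with the class of weights satisfying the quantitative ``$w$-small implies $dx$-small'' condition
\[
\frac{|E|}{|Q|} \leq C\Big(\frac{w(E)}{w(Q)}\Big)^{\rho}, \qquad E\subset Q,\ Q\in\mathcal{R},
\]
for some $C,\rho>0$, while $\bigcup_{r>1}\mathrm{RH}_r^{\mathcal{R}}(\tom)$ coincides with the class of weights satisfying the dual ``$dx$-small implies $w$-small'' condition
\[
\frac{w(E)}{w(Q)} \leq C\Big(\frac{|E|}{|Q|}\Big)^{\delta}, \qquad E\subset Q,\ Q\in\mathcal{R},
\]
for some $C,\delta>0$. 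Thus the task reduces to showing that these two quantitative conditions are equivalent.

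The key structural fact is that, for each $Q\in\mathcal{R}$, the dyadic basis $\mathcal{R}_0(Q)$ inside $Q$ has the property that every element of sizelevel $m+1$ has Lebesgue measure equal to one half that of its unique parent at sizelevel $m$, exactly as in the Euclidean dyadic setting. Consequently, given any nonnegative $f\in L^1(Q,dx)$ and any level $\lambda>f_Q$, the classical stopping-time construction carried out inside $\mathcal{R}_0(Q)$ yields a disjoint collection $\{Q_j\}\subset\mathcal{R}_0(Q)$ with $\lambda<f_{Q_j}\leq 2\lambda$ for every $j$ and $\{f>\lambda\}\subset\bigcup_j Q_j$ up to a null set. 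This is precisely the content of \cite[Theorem~9]{Fernandez-Roncal2020} applied locally to $Q$.

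With this tool in hand, both implications follow from the standard $A_\infty$ arguments in the style of \cite[Chapter~7]{Duoandikoetxea2001}. For Muckenhoupt $\Rightarrow$ reverse H\"older, one fixes $Q\in\mathcal{R}$, performs the CZ stopping time for $w$ on $\mathcal{R}_0(Q)$ at geometrically spaced levels $\lambda_k=a^k\,(w(Q)/|Q|)$ with $a>1$, and applies the Muckenhoupt condition with $E=\{w>\lambda_{k+1}\}\cap Q_j$ on each selected $Q_j$. The controlled ratio $f_{Q_j}/\lambda_k\leq 2$ upgrades to a good-$\lambda$ estimate $|\{w>\lambda_{k+1}\}|\leq\theta|\{w>\lambda_k\}|$ for some $\theta<1$ when $a$ is large; summing the resulting geometric series yields the reverse H\"older inequality on $Q$. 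For reverse H\"older $\Rightarrow$ Muckenhoupt, an analogous iteration is run with the roles of $dx$ and $w\,dx$ interchanged: one applies the CZ decomposition to $\chi_E$ inside $Q$ at levels $\lambda_k=a^{-k}$ to produce nested families whose total $dx$-measure shrinks geometrically, and the reverse H\"older inequality propagates this geometric decay from Lebesgue measure to $w$-measure, delivering the Muckenhoupt-type estimate.

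The only genuine obstacle is verifying that the constants in these stopping-time iterations are independent of $Q\in\mathcal{R}$ and of the sizelevel $m_Q$. This is automatic: every $Q\in\mathcal{R}$ is a translate of some $V_m$ and $\mathcal{R}_0(Q)$ is canonically isomorphic, as a measure-preserving dyadic tree with splitting ratio $2$, to the sub-basis of $\mathcal{R}_0$ beneath a fixed element of sizelevel $m_Q$. Hence all constants arising in the CZ construction and the geometric summations depend only on $a$ and on the exponents $\rho$ or $\delta$, not on $Q$, so the classical argument transfers verbatim and yields the claimed equality $\bigcup_{p>1}A_p^{\mathcal{R}}(\tom)=\bigcup_{r>1}\mathrm{RH}_r^{\mathcal{R}}(\tom)$.
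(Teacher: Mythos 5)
Your argument matches the paper's route exactly: reduce via Theorem~\ref{teo:weights2} to the two quantitative ratio conditions, invoke the localized Calder\'on--Zygmund decomposition $\mathcal{R}_0(Q)$ of \cite[Theorem~9]{Fernandez-Roncal2020}, run the classical good-$\lambda$ iteration of \cite[Chapter~7]{Duoandikoetxea2001}, and observe that all constants are uniform in $Q$ because each $\mathcal{R}_0(Q)$ is a balanced binary dyadic tree. One detail in your sketch of the Muckenhoupt~$\Rightarrow$~reverse~H\"older step should be tightened: applying the inequality of Theorem~\ref{teo:weights2}(1) directly with $E=\{w>\lambda_{k+1}\}\cap Q_j$ gives no information, since $w(E)/w(Q_j)$ need not be small; what one actually uses is the consequence, obtained by passing to complements in (1), that $|F|/|Q_j|\le\alpha$ forces $w(F)/w(Q_j)\le\beta$ for some fixed $\alpha,\beta<1$, applied with $F$ the union of the $\lambda_{k+1}$-stopping cubes inside $Q_j$; this yields geometric decay of $w(\Omega_{\lambda_k})$ rather than of $|\{w>\lambda_k\}|$, which is then fed into the layer-cake identity $\int_Q w^r\,dx=(r-1)\int_0^\infty\lambda^{r-2}\,w(\{w>\lambda\}\cap Q)\,d\lambda$.
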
 

\subsection{Examples of Muckenhoupt weights with respect to \texorpdfstring{$\mathcal{R}$}{R}}\label{examplesofweights}

In this subsection we provide examples of Muckenhoupt weights with respect to the Rubio de Francia basis.  Precisely, we will show two easy examples, regarding cylindrical weights (i.e., weights depending only on a finite number of variables) and tensor products. Actually, the former can be regarded as a particular case of the latter, but we show both cases to make the presentation more instructive. 

Consider a weight  $w\in A_1(\mathbb{T})$, i.e., a weight satisfying  
\[
[w]_{A_1(\mathbb{T})}:=\esssup_{t\in\mathbb{T}}\frac{Mw(t)}{w(t)}<\infty.
\] Here $Mw$ denotes the Hardy--Littlewood maximal function in the torus $\mathbb{T}$, that is,
\[
Mw(t) := \sup_{I\ni t} w_I,
\]
where the supremum is taken among all intervals  $I\subset\mathbb{T}$ containing $t$.
 There exists then a zero measure set $E\subset\mathbb{T}$ such that, for any $t\in \mathbb{T}\backslash E$,
\[
\sup_{I\ni t} w_I \leq [w]_{A_1(\mathbb{T})} w(t).
\]
Define the function $v \colon \tom\to \mathbb{R}$ given by $v(x)=w(x_1)$. Fubini's theorem  ensures that if $f\in L^1(\tom)$, then  {for any $n\in\mathbb N$}, 
$$
\int_{\tom}f(x)\,dx=\int_{\mathbb{T}^n}\int_{\mathbb{T}^{n,\omega}}f(x_{(n},x^{(n})\,dx^{(n}\,dx_{(n}=\int_{\mathbb{T}^{n,\omega}}\int_{\mathbb{T}^n}f(x_{(n},x^{(n})\,dx_{(n}\, dx^{(n},
$$
where the inside integrals exist a.e. and they lead to integrable functions in the rest of the variables.
Thus $v(x)$ is a weight function which satisfies  
\[
\begin{split}
M^\mathcal{R}v(x)=\sup_{  \mathcal{R}\ni Q\ni x} v_Q &=\sup_{ \mathcal{R}\ni Q\ni x}\frac{1}{|Q|}\int_{\pi_{\mathbb{T}^{1,\omega}}
	(Q)}\int_{\pi_{\mathbb{T}}(Q)} w(y_1) dy_{(1} dy^{(1}\\
&= \sup_{ \mathcal{R}\ni Q\ni x}\frac{1}{|\pi_{\mathbb{T}^{1,\omega}}(Q)|}\int_{\pi_{\mathbb{T}^{1,\omega}}(Q)}\frac{1}{|\pi_{\mathbb{T}}(Q)|}\int_{\pi_{\mathbb{T}}(Q)} w(y_1) dy_{1} dy^{(1}\\
&\leq  [w]_{A_1(\mathbb{T})}w(x_1) \sup_{ \mathcal{R}\ni Q\ni x}\frac{1}{|\pi_{\mathbb{T}^{1,\omega}}(Q)|}\int_{\pi_{\mathbb{T}^{1,\omega}(Q)}}  dy^{(1} \\
&=[w]_{A_1(\mathbb{T})}v(x)
\end{split}
\]
for every $x\in \tom$ with $x_1\in\mathbb{T}\backslash E$, where $\pi_S(Q)$ denotes the projection of $Q$ on $S$ (this should not be confused with the measure of $Q$ induced by the function $\pi_S$). Since $ E\times \mathbb{T}^{1,\omega}$ is a zero measure subset of $\tom$, we have that $v$ is an $A_1^\mathcal{R}(\tom)$ weight. 
As $A_1^\mathcal{R}(\tom)\subset A_p^\mathcal{R}(\tom)$ for each $p \in (1, \infty)$,  the weight $v$ is also in $A_p^\mathcal{R}(\tom)$.  
 
More generally, the preceding construction can be generalized to include those Muckenhoupt weights of the $n$-dimensional torus which are associated with the strong maximal function, that is, the class of $A_p^\mathcal{B}(\mathbb{T}^n)$ weights where $\mathcal{B}$ consists of all rectangles in $\mathbb{T}^n$. Thus the  classes of weights we are working with are not empty and moreover they share some elements.

 A similar argument can be performed for weights of the form $v(x):=\prod_{j=1}^\infty w_j(x_j)$ with $\{w_j\}_{j\in\mathbb N}$ being a sequence of weights defined on $\mathbb T$. To ensure that the above product converges a.e. to a reasonable weight in $\tom$ let us assume that $w_j(x_j) = 1$ for $x_j \in A_j \subset \mathbb{T}$ with $\sum_{j \in \mathbb{N}} |\mathbb{T} \setminus A_j| < \infty$, and that $\Pi_{j \in \mathbb{N}} w_j(\mathbb{T}) \in (0,\infty)$. Suppose that $\prod_{j=1}^\infty[w_j]_{A_p(\mathbb{T})}<\infty$, for some $p \in (1, \infty)$, where
 \[
 [w]_{A_p(\mathbb{T})}=  \sup_{I}\left(\frac{1}{|I|}\int_I w(x)dx\right)\left(\frac{1}{|I|}\int_I w(x)^{1-p'}dx\right)^{p-1}<\infty
 \]
 (here the supremum is taken among all intervals $I\subset\mathbb{T}$). Then $v$ turns out to be a tensor weight which belongs to the Muckenhoupt class introduced in Definition~\ref{def:weights1}. Similarly, if $w_j\in \mathrm{RH}_r(\mathbb{T})$ for all $j\in\mathbb{N}$ and $\prod_{j=1}^\infty[w_j]_{\mathrm{RH}_r(\mathbb{T})}<\infty$ for some $r>1$, then $v$ is a tensor weight belonging to the $\mathrm{RH}_r^\mathcal{R}(\tom)$ class.

\subsection{Construction of weights in \texorpdfstring{$\tom$}{Tom} via periodization procedure}
\label{subsec:const}

Many examples are at hand by periodizing weights in the real line. This can be done by taking into account that any weight $w\in L^1_{\mathrm{loc}}(\mathbb{R})$ induces a weight in $\ell^1$ (recall that a weight in a measure space $X$ is a nonnegative locally integrable function, i.e., integrable on compact sets), which we will denote by $w$ as well and which is defined by $w(k)=w([0,1]+k)$ for all $k\in\mathbb{Z}$.
\begin{definition}
Let $w\in L^1_{\mathrm{loc}}(\mathbb{R})$ be a weight.  For every nonnegative sequence $\{a_k\}_{k\in\mathbb{Z}}\in \ell^1(w)$ we define the periodization ${\bf p}[w,\{a_k\}_{k\in\mathbb{Z}}]$ of $w$ by 
\[
{\bf p}[w,\{a_k\}_{k\in\mathbb{Z}}](x)=\sum_{k\in\mathbb{Z}} a_kw(x+k),\qquad x\in \mathbb{T}.
\]
\end{definition}
This is a way to compress the information about the weight $w$ in the interval $[0,1]$ by downplaying values corresponding to points which are far away from the origin. The periodization ${\bf p} [w,\{a_k\}_{k\in\mathbb{Z}}]$ of a weight $w\in L^1_{\mathrm{loc}}(\mathbb{R})$ is a weight in $\mathbb{T}$. This is immediate from the summability of $\{a_k\}_{k\in\mathbb{Z}}$ in the $\ell^1(w)$ sense. Indeed, the nonnegativity is clear and for the integrability we have that
\begin{align*}
\int_{\mathbb{T}} {\bf p}[w,\{a_k\}_{k\in\mathbb{Z}}](x)dx = \sum_{k\in\mathbb{Z}}a_k\int_{[0,1]} w(x+k) \, dx
=\sum_{k\in\mathbb{Z}}a_kw([0,1]+k)<\infty.
\end{align*}
We point out that other periodization procedures have been considered in the literature, see for instance the recent article \cite{Berkson2020}.

\begin{example}\label{log}
We present two simple examples of weights in $\mathbb{T}$, which arise as a result of the periodization procedure described above.

\begin{enumerate}
\item Firstly, we have that ${\bf p}[w,\{2^{-|k|}\}_{k\in\mathbb{Z}}]$ is a weight in $\mathbb{T}$ for any power weight $w$ in $\mathbb{R}$. Indeed, if $w(x)=|x|^\alpha$, $\alpha>-1$, then $w([0,1]+k)\leq w([-|k|-1,|k|+1]) = \frac{2(|k|+1)^{\alpha+1}}{\alpha+1}$, and the latter quantities form a sequence which is summable in $k \in \mathbb{Z}$ after multiplication term by term by $\{2^{-|k|}\}_{k\in\mathbb{Z}}$.

\item Secondly, let us consider the weight $w(x)=\max \{\log\frac{1}{|x|},1 \}\in L^1_{\mathrm{loc}}(\mathbb{R})$. The periodization ${\bf p}[w,\{2^{-|k|}\}_{k\in\mathbb{Z}}]$ is a weight in $\mathbb{T}$. Indeed, as for power weights, we can perform the bound $w([0,1]+k)\leq w([-|k|-1,|k|+1])$ and the latter quantity is easily computed, thus getting $2|k|+2+2/e$,
which is again summable in $k \in \mathbb{Z}$ after multiplication term by term by $\{2^{-|k|}\}_{k\in\mathbb{Z}}$.
\end{enumerate}
\end{example}

Some properties of weights in the real line are conserved via periodization.
\begin{proposition}
\label{prop:perioA1}
Let $w$ be an $A_1(\mathbb{R})$ weight. If $\{\lambda^{-|k|}\}_{k\in\mathbb{Z}}\in\ell^1(w)$, $\lambda>1$, then the periodization ${\bf p}[w,\{\lambda^{-|k|}\}_{k\in\mathbb{Z}}]$ is an $A_1(\mathbb{T})$ weight and $[w]_{A_1(\mathbb{T})}\leq \lambda^2[w]_{A_1(\mathbb{R})}$.
\end{proposition}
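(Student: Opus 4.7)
The plan is to lift the arc $I\subset\mathbb T$ and the weight $v:={\bf p}[w,\{\lambda^{-|k|}\}_{k\in\mathbb Z}]$ to $\mathbb R$, apply the $A_1(\mathbb R)$ condition for $w$, and then project back to $\mathbb T$. The two factors of $\lambda$ in the stated bound will emerge as a ``price'' paid twice for the fact that the natural lift of $v$ is not exactly $1$-periodic.

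First, I would introduce the ``unwrapped'' periodization
\[
V(x):=\sum_{k\in\mathbb Z}\lambda^{-|k|}\,w(x+k),\qquad x\in\mathbb R,
\]
which agrees with $v$ on $[0,1)$. The triangle inequality $|k-n|\ge|k|-|n|$ immediately yields the quasi-translation estimate $V(x+n)\le\lambda^{|n|}\,V(x)$ for every $x\in\mathbb R$ and $n\in\mathbb Z$. Writing $\bar v$ for the $1$-periodic extension of $v$ to $\mathbb R$, so that $\bar v(x)=V(\{x\})$ with $\{x\}$ the fractional part, this implies both $\bar v(x)\le\lambda^{|\lfloor x\rfloor|}V(x)$ and $V(x)\le\lambda^{|\lfloor x\rfloor|}\bar v(x)$.

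Next, fix $t\in\mathbb T$ off a null set so that the $A_1(\mathbb R)$ estimate $M_{\mathbb R}w(t^*+k)\le[w]_{A_1(\mathbb R)}\,w(t^*+k)$ holds simultaneously for every $k\in\mathbb Z$ (a countable intersection of full-measure conditions). For any arc $I\ni t$ in $\mathbb T$, I would lift $I$ to an interval $J\subset[0,2]$ of length $|J|=|I|\le 1$ and denote by $t^*\in J$ the lift of $t$. Since every $x\in J$ satisfies $|\lfloor x\rfloor|\le 1$, the preceding estimates give $\bar v\le\lambda V$ pointwise on $J$ and $V(t^*)\le\lambda\,\bar v(t^*)=\lambda v(t)$. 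Combining $\int_I v=\int_J\bar v$ with Fubini,
\begin{align*}
|I|\,v_I \;=\; \int_J\bar v \;\le\; \lambda\int_J V \;=\; \lambda\sum_{k\in\mathbb Z}\lambda^{-|k|}\int_{J+k}w \;=\; \lambda|J|\sum_{k\in\mathbb Z}\lambda^{-|k|}\,w_{J+k},
\end{align*}
and since $t^*+k\in J+k$ one has $w_{J+k}\le[w]_{A_1(\mathbb R)}w(t^*+k)$, so $v_I\le\lambda[w]_{A_1(\mathbb R)}V(t^*)\le\lambda^2[w]_{A_1(\mathbb R)}v(t)$. Taking the essential supremum in $t$ then yields $[v]_{A_1(\mathbb T)}\le\lambda^2[w]_{A_1(\mathbb R)}$.

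The main obstacle is precisely the non-periodicity of $V$: one cannot identify $V$ with $\bar v$, so each integer straddling of $J$ or $t^*$ costs a multiplicative $\lambda$. Since $J,t^*\subset[0,2]$ and $|J|\le 1$, each straddles at most one integer, giving the total cost $\lambda\cdot\lambda=\lambda^2$ that matches the stated bound.
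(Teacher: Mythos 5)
Your proof is correct. The paper establishes the same bound by splitting into two cases according to whether the arc $I$ straddles the point $0\equiv 1$: when it does not, a direct calculation over the single representative $I\subset[0,1]$ gives $[w]_{A_1(\mathbb{R})}$ with no $\lambda$-factor at all; when it does, the paper re-assembles a connected representative $\widetilde I\subset[-1,1]$ and tracks the index shift in $k$, paying a factor $\lambda$ twice. Your argument instead introduces the unwrapped function $V(x)=\sum_k\lambda^{-|k|}w(x+k)$ on all of $\mathbb{R}$ together with the quasi-translation estimate $V(x+n)\le\lambda^{|n|}V(x)$, and lifts every arc $I$ to a single interval $J\subset[0,2]$ of the same length. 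The two sources of the $\lambda^2$ then surface in one uniform computation ($\bar v\le\lambda V$ on $J$ and $V(t^*)\le\lambda v(t)$), avoiding the case distinction entirely; in fact when $I$ does not wrap around, $J\subset[0,1]$ so $\lfloor x\rfloor=0$ throughout and your estimate automatically degenerates to the paper's sharper Case-1 bound. The two proofs are thus closely related in spirit — both lift $\mathbb{T}$ to $\mathbb{R}$ and pay a $\lambda$-penalty per integer crossing — but your reformulation via $V$ and its quasi-periodicity is cleaner, unifies the cases, and makes the origin of the exponent $2$ transparent.
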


\begin{proof}
Let $I\subset \mathbb{T}$. There are two possibilities for $I$. One of them is that its representative in the interval $[0,1]$ has just one connected component. In this case,
\begin{align*}
    \frac{1}{|I|}\int_I {\bf p}\big[w,\{\lambda^{-|k|}\}_{k\in\mathbb{Z}}\big](z)dz
    &= \frac{1}{|I|}\int_I\sum_{k\in\mathbb{Z}}\lambda^{-|k|}w(z+k)dz\\
    &= \sum_{k\in\mathbb{Z}} \lambda^{-|k|}\frac{1}{|I+k|}\int_{I+k} w(z)dz\\
    &\leq [w]_{A_1(\mathbb{R})}\sum_{k\in\mathbb{Z}} \lambda^{-|k|}\essinf_{z\in I }w(z+k)\\
    &\leq [w]_{A_1(\mathbb{R})}\sum_{k\in\mathbb{Z}}\lambda^{-|k|} w(x+k) \\
    & =  {[w]_{A_1(\mathbb{R})}} \, {\bf p}\big[w,\{\lambda^{-|k|}\}_{k\in\mathbb{Z}}\big](x)
\end{align*}
for a.e. $x\in I$.

In case the representative of $I$ in the interval $[0,1]$ has two connected components, we write $I^-$ and $I^+$ respectively for the left hand side and the right hand side of the representative of $I$ inside $[0,1]$. In this situation, we have that $\widetilde{I} = (I-1)^+\cup I^-$ is a representative of $I$ in $\mathbb{R}$ with just one connected component. We can then estimate
\begin{align*}
\frac{1}{|I|}\int_I  {\bf p}\big[w,\{\lambda^{-|k|}\}_{k\in\mathbb{Z}}\big](z)dz 
& = \frac{1}{|I|}\int_{I} \sum_{k\in\mathbb{Z}} \lambda^{-|k|}w(z+k)dz\\
&\leq \sum_{k\in\mathbb{Z}} \lambda^{-|k|+1}\frac{1}{|I|}\int_{\widetilde{I}} w(z+k)dz\\
&\leq [w]_{A_1(\mathbb{R})} \sum_{k\in\mathbb{Z}} \lambda^{-|k|+1} \essinf_{z\in \widetilde{I}}w(z+k)\\
&\leq   [w]_{A_1(\mathbb{R})} \sum_{k\in\mathbb{Z}} \lambda^{-|k|+2} w(x+k) \\
& \leq \lambda^2 [w]_{A_1(\mathbb{R})} {\bf p}\big[w,\{\lambda^{-|k|}\}_{k\in\mathbb{Z}}\big](x),
\end{align*}
which again holds for a.e. $x \in I$.
\end{proof}

\noindent The weight ${\bf p} [w,\{2^{-|k|}\}_{k\in\mathbb{Z}} ]$ in Example~\ref{log} is then an $A_1(\mathbb{T})$ weight, in view of the above proposition and the fact that $w(x)=\max \big\{\log\frac{1}{|x|},1 \big\}\in A_1(\mathbb{R})$.

\begin{proposition}
\label{prop:perioRH}
Let $w$ be a $\mathrm{RH}_r(\mathbb{R})$ weight for some $r \in (1, \infty)$.   If $\{\lambda^{-|k|}\}_{k\in\mathbb{Z}}\in\ell^1(w)$, $\lambda>1$, then the periodization ${\bf p}[w,\{\lambda^{-|k|}\}_{k\in\mathbb{Z}}]$ is a $\mathrm{RH}_r(\mathbb{T})$ weight and $[w]_{\mathrm{RH}_r(\mathbb{T})}\leq  \lambda^2 [w]_{\mathrm{RH}_r(\mathbb{R})}$.
\end{proposition}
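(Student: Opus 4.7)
The plan is to follow the strategy of the proof of Proposition \ref{prop:perioA1}, using Minkowski's integral inequality to move the series defining $v:={\bf p}[w,\{\lambda^{-|k|}\}_{k\in\mathbb{Z}}]$ outside an $L^r$ norm and then applying the reverse H\"older hypothesis for $w$ on shifted intervals of $\mathbb{R}$. Fix an arbitrary interval $I\subset\mathbb{T}$. As in the $A_1$ argument, I split into two cases according to the shape of the canonical representative of $I$ in $[0,1]$: either it is a single subinterval (Case~1), or it is the union $I^+\cup I^-$ of two pieces with $I^+$ near $1$ and $I^-$ near $0$ (Case~2).

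In Case~1 one has $I+k\subset\mathbb{R}$ an interval of length $|I|$ for every $k$, so the combination of Minkowski's inequality in $L^r(I)$ with the reverse H\"older hypothesis applied on each $I+k$ gives
\[
\Bigl(\int_I v^r\Bigr)^{1/r}\le\sum_{k\in\mathbb{Z}}\lambda^{-|k|}\Bigl(\int_{I+k}w^r\Bigr)^{1/r}\le [w]_{\mathrm{RH}_r(\mathbb{R})}\,|I|^{\frac{1}{r}-1}\int_I v,
\]
where the last identity uses $\sum_{k\in\mathbb{Z}}\lambda^{-|k|}\int_{I+k}w=\int_I v$ obtained by swapping sum and integral. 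This already yields the reverse H\"older inequality for $v$ on $I$ with constant $[w]_{\mathrm{RH}_r(\mathbb{R})}\le\lambda^2[w]_{\mathrm{RH}_r(\mathbb{R})}$.

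For Case~2 I introduce $\widetilde I=(I^+-1)\cup I^-$, which is a single interval of length $|I|$ in $\mathbb{R}$. Running the Case~1 computation verbatim on $\widetilde I$ in place of $I$ (interpreting $v$ via its defining series on $\mathbb{R}$) produces $\|v\|_{L^r(\widetilde I)}\le [w]_{\mathrm{RH}_r(\mathbb{R})}|I|^{1/r-1}\int_{\widetilde I}v$. To transfer this back to $I$, I use the pair of pointwise comparisons $\lambda^{-1}v(y)\le v(y+1)\le\lambda v(y)$, valid for every $y\in\mathbb{R}$; these follow term by term from $\lambda^{-|k-1|}\le\lambda\cdot\lambda^{-|k|}$ and $\lambda^{-|k-1|}\ge\lambda^{-1}\cdot\lambda^{-|k|}$. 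After the substitution $y=x-1$ on the integral over $I^+$, the upper comparison yields $\int_I v^r\le\lambda^r\int_{\widetilde I}v^r$, i.e.\ $\|v\|_{L^r(I)}\le\lambda\,\|v\|_{L^r(\widetilde I)}$, and the lower comparison yields $\int_{\widetilde I}v\le\lambda\int_I v$. Chaining these three estimates gives
\[
\|v\|_{L^r(I)}\le\lambda\,\|v\|_{L^r(\widetilde I)}\le\lambda\,[w]_{\mathrm{RH}_r(\mathbb{R})}|I|^{\frac{1}{r}-1}\int_{\widetilde I}v\le\lambda^2\,[w]_{\mathrm{RH}_r(\mathbb{R})}|I|^{\frac{1}{r}-1}\int_I v,
\]
which is exactly the claimed reverse H\"older inequality with constant $\lambda^2[w]_{\mathrm{RH}_r(\mathbb{R})}$. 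I expect the main obstacle to be the correct bookkeeping of the index shifts in Case~2, in particular the verification of the two pointwise comparisons of $v(y)$ with $v(y+1)$; once those are in place the rest is a routine manipulation of the series.
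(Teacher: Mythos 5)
Your proof is correct and follows essentially the same route as the paper: Minkowski's inequality in $L^r$ to pull the series out of the norm, the reverse H\"older hypothesis applied on the shifted intervals $I+k$ (resp.\ $\widetilde I+k$), and a transfer between $I$ and $\widetilde I$ at a cost of one factor of $\lambda$ in each direction. The only cosmetic difference is that you isolate the two-sided pointwise comparison $\lambda^{-1}v(y)\le v(y+1)\le\lambda v(y)$ as a standalone lemma, whereas the paper absorbs the same index shifts directly into the chain of inequalities via the factors $\lambda^{-|k|+1}$ and $\lambda^{-|k|+2}$.
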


\begin{proof}
As above, for an interval $I\subset \mathbb{T}$, there are two possibilities for $I$. In case its representative in the interval $[0,1]$ has just one connected component, we can apply Minkowski's inequality and the reverse H\"older property of $w$ to get
\begin{align*}
\Big(\frac{1}{|I|}\int_I {\bf p}\big[w,\{\lambda^{-|k|}\}_{k\in\mathbb{Z}}\big](x)^rdx\Big)^{1/r}& =\Big[\frac{1}{|I|}\int_I    \Big(\sum_{k\in\mathbb{Z}}\lambda^{-|k|} w(x+k)\Big)^r dx\Big]^{1/r}\\
&\leq  [w]_{\mathrm{RH}_r(\mathbb{R})} \frac{1}{|I|}\int_{I}\sum_{k\in\mathbb{Z}}\lambda^{-|k|}      w(x+k) dx\\
&= [w]_{\mathrm{RH}_r(\mathbb{R})} \frac{1}{|I|}\int_{I}{\bf p}\big[w,\{\lambda^{-|k|}\}_{k\in\mathbb{Z}}\big](x) dx.
\end{align*}

In case the representative of $I$ in $[0,1]$ has two connected components, $I^-$ and $I^+$, we denote $\widetilde{I} = (I^+ - 1) \cup I^- $ and proceed as before to get

\begin{align*}
\Big(\frac{1}{|I|}\int_I {\bf p}\big[w,\{\lambda^{-|k|}\}_{k\in\mathbb{Z}}\big](x)^r\,dx\Big)^{1/r} 
& = \Big[\frac{1}{|I|}\int_I \Big(\sum_{k\in\mathbb{Z}} \lambda^{-|k|}w(x+k)\Big)^rdx\Big]^{1/r}\\
& \leq \sum_{k\in\mathbb{Z}} \lambda^{-|k|+1}\Big(\frac{1}{|I|}\int_{\widetilde{I}} w(x+k)^rdx\Big)^{1/r}\\
& \leq [w]_{\mathrm{RH}_r(\mathbb{R})} \sum_{k\in\mathbb{Z}} \lambda^{-|k|+1} \frac{1}{|I|}\int_{\widetilde{I}} w(x+k) dx\\
& \leq [w]_{\mathrm{RH}_r(\mathbb{R})} \sum_{k\in\mathbb{Z}} \lambda^{-|k|+2} \frac{1}{|I|}\int_{I} w(x+k) dx\\
& \leq \lambda^2 [w]_{\mathrm{RH}_r(\mathbb{R})} \frac{1}{|I|}\int_{I} {\bf p}\big[w,\{\lambda^{-|k|}\}_{k\in\mathbb{Z}}\big](x) dx.
\end{align*}
This proves the desired result. 
\end{proof}

\noindent The weight $w(x)=\max \{\log\frac{1}{|x|},1 \}$ is known to satisfy $w\in A_1(\mathbb{R})\cap (\bigcap_{r>1}\mathrm{RH}_r(\mathbb{R}))$ (this is because $w^s\in A_1(\mathbb{R})$ for all $s\geq 1$, see  \cite[p. 2948]{cruzuribe}). Then, in view of the above result, the periodization ${\bf p}[w,\{2^{-|k|}\}_{k\in\mathbb{Z}}]$ introduced in Example~\ref{log} is in $\mathrm{RH}_r(\mathbb{T})$ for every $r \in (1, \infty)$. We then have an example of a weight belonging to $A_1^\mathcal{R}(\tom)\cap (\bigcap_{r>1}\mathrm{RH}_r^\mathcal{R}(\tom))$, just by applying the procedure depicted at the beginning of this subsection. 
 
\subsection{Weighted unboundedness of \texorpdfstring{$M^\mathcal{R}$}{MR}. Proof of Theorem~\ref{thm:unboundedweights_new}} \label{weighted_unb}
  
We now prove Theorem~\ref{thm:unboundedweights_new} on weighted unboundedness for the maximal operator $M^\mathcal{R}$. The proof relies on a slight modification of the argument used in the proof of Theorem~\ref{unweighted_unboundedness}. 

\begin{proof}[Proof of Theorem~\ref{thm:unboundedweights_new}]
Fix $j \in \mathbb{N}$. We have $S_j = \{Q_j^{(1)}, \dots, Q_j^{(l_j)} \}$, where $Q_j^{(k)} = T_j^{(k)} + Q_j$ with $T_j^{(k)} = (0, \dots, 0, \frac{N_j-1}{N_j} |\pi_k(Q_j)|, 0, \dots)$ for each $k \in \{1, \dots, l_j\}$. Since $|Q_{j}^{(k)} \cap Q_{j}| = \frac{1}{N_j} |Q_{j}^{(k)}|$, an application of \eqref{eq:condelta_new} with $E=Q_{j}^{(k)} \cap Q_{j} \subset Q_{j}^{(k)}$ gives
\[
w(Q_{j}^{(k)} \setminus Q_{j}) = w(Q_{j}^{(k)}) - w(Q_{j}^{(k)} \cap Q_{j}) \geq (1-CN_j^{-\delta}) w(Q_{j}^{(k)}).
\]
Similarly, letting $Q_{j,n}^{(k)} := \frac{n}{N_j-1} T_j^{(k)} + Q_j$, $n \in \{0, \dots, N_j-1\}$, we get that $|Q_{j,n}^{(k)} \setminus Q_{j,n+1}^{(k)}| = \frac{1}{N_j}|Q_{j,n}^{(k)}|$. An application of \eqref{eq:condelta_new} with $E=Q_{j,n}^{(k)} \setminus Q_{j,n+1}^{(k)} \subset Q_{j,n}^{(k)}$ gives
\[
\frac{w(Q_{j,n+1}^{(k)})}{w(Q_{j,n}^{(k)})} \geq 1 - \frac{w(Q_{j,n}^{(k)} \setminus Q_{j,n+1}^{(k)})}{w(Q_{j,n}^{(k)})} \geq 1-CN_j^{-\delta}.
\]
Using the above estimates, we obtain
\[
\frac{w(Q_{j}^{(k)} \setminus Q_{j})}{w(Q_{j})} \geq (1-CN_j^{-\delta}) \frac{w(Q_{j,N_j-1}^{(k)})}{w(Q_{j,0}^{(k)})} \geq \big( 1-CN_j^{-\delta} \big)^{N_j} > 0.
\]

Finally, take $f_j = \chi_{Q_j}$. Then for each $x \in Q \in S_j$ we have
\[
M^{\mathcal{R}'} f_j(x) \geq |f_j|_{Q} = \frac{|Q \cap Q_j|}{|Q|} = \frac{1}{N_j}.
\]
Because the sets $Q_j^{(k)} \setminus Q_j$ are disjoint, we obtain
\begin{equation*}
\frac{\|M^{\mathcal{R}'}f_j\|_{L^{q,\infty}(w)}}{\|f_j\|_{L^q(w)}} \geq \frac{1}{2N_j} \, \left(\frac{ \sum_{k=1}^{l_j} w(Q_j^{(k)} \setminus Q_j)}{w(Q_j)}\right)^{1/q}
\geq \frac{\big( 1-CN_j^{-\delta} \big)^{N_j/q} l_j^{1/q}}{2N_j},
\end{equation*} 
which gives the claim.
 \end{proof}
 
 \begin{proof}[Proof of Corollary~\ref{corolweights_new}]
 Observe that, in view of Example~\ref{ex:epsn}, for $N$, the smallest positive integer such that $C N^{-\delta} < 1$ holds, and for each $j \in \mathbb{N}$ one can find a $(\frac{1}{N}, j)$-configuration contained in $\mathcal{R}$. Then, since
 \[
 \lim_{j \rightarrow \infty} \frac{\big( 1-CN^{-\delta} \big)^{N/q} j^{1/q}}{N} = \infty,
 \]
an application of Theorem~\ref{thm:unboundedweights_new} gives the desired result.  
\end{proof}

\begin{proof}[Proof of Corollary~\ref{corolweights_new2}]
The claim follows directly from Corollaries~\ref{cor:RH=Ap}~and~\ref{corolweights_new}.
\end{proof}
The original motivation to investigate various classes of weights is that their elements are usually supposed to behave nicely, particularly when studying the boundedness of the associated maximal operator. Indeed, in case of the Euclidean spaces both, the Muckenhoupt condition and the reverse H\"older condition, say that a given weight is regular enough to share good properties with the trivial weight constantly equal to $1$. 

In our setting the situation is different, as the unweighted maximal operator $M^\mathcal{R}$ is not bounded on $L^q(\tom)$ for any finite $q$. As we mentioned before, ``to be regular'' means ``to share properties with the trivial weight''. Note that the proof  of Theorem~\ref{unweighted_unboundedness} can be adapted to get the proof of Theorem \ref{thm:unboundedweights_new}, and the Muckenhoupt condition was applied to ensure that the argument used in the unweighted setting may be successfully repeated in the weighted setting with only small modifications. This reflects that the problem in the infinite-dimensional setting is not only about the measure, but also about the geometry of the space and the basis.  

Finally, we go back to the discussion after Corollary~\ref{corolweights_new}, regarding the intermediate bases $\mathcal{R}'$ introduced in Theorem~\ref{thm:r_0}. Namely, as for these bases $M^{\mathcal{R}'}$ is bounded on $L^q(\tom)$ in some range of $q$'s, one would ask if this positive result can be transferred to the weighted setting. However, remind that the proof of the positive part of Theorem~\ref{thm:r_0} relies on a very delicate argument which originates in the probability theory. In particular, the fact that the constant weight is invariant under permutations of coordinates is crucial. If a Muckenhoupt weight $w$ is considered instead, then we lose the possibility of using estimates for Bernoulli distributed random variables. So, even if the said transference is indeed possible in some cases, one cannot expect that only small modifications are needed to make the argument used in Theorem~\ref{thm:r_0} valid also in the weighted case. Either way, studying the weight theory for the bases $\mathcal{R}'$ is certainly an interesting direction for further research.

\section{Open questions}\label{sec:reverse}

In the Euclidean setting, when the basis is the family of all cubes in $\mathbb{R}^n$, the corresponding classes given in Definition~\ref{def:weights1} are equivalent to the Fujii--Wilson $A_{\infty}(\mathbb{R}^n)$ class. We would like to examine if the same happens for bases in $\tom$. Before that we provide a suitable definition.
\begin{definition}\label{def:Ainfinity}
Given a basis $\mathcal{B}$ in $\tom$, we say that $w\in A_\infty^\mathcal{B}(\tom)$ if the Fujii--Wilson type $A_\infty^\mathcal{B}(\tom)$ constant  
\begin{equation}\label{Ainfty}
[w]_{A_\infty^\mathcal{B}(\tom)}:=\sup_{Q\in \mathcal{B} }\frac{1}{w(Q)}\int_Q M^\mathcal{B}(w\chi_Q)(x)\, dx
\end{equation}
is finite. 
\end{definition}

Firstly, by the trivial boundedness of the dyadic maximal operator $M^{\mathcal{R}_0}$ on $L^\infty(\tom)$, one obtains its boundedness over all spaces $L^q(\tom)$ with $q \in (1, \infty]$. Thus, in virtue of \cite[Theorem~4.2]{Duoandikoetxea2016}, the classes of Muckenhoupt, reverse H\"older, and $A_\infty^{\mathcal{R}_0}(\tom)$ weights corresponding to the restricted basis $\mathcal{R}_0$ coincide.
In the case of $\mathcal{R}$, in turn, the unboundedness of $M^\mathcal{R}$ leads us to a more delicate study of the corresponding classes of weights, which could end up not being the same.

As a first positive result in the context of $\mathcal{R}$, we note that the local Calder\'on--Zygmund decomposition allows one to prove that the class of $A_\infty^\mathcal{R}(\tom)$ weights is inside the class of reverse H\"older weights. Moreover, we can prove a sharp reverse H\"older inequality in the spirit of those in \cite{Hytoenen2012,Paternostro2019} but in the sharper form obtained in \cite{ParissisRela} adapted to flat weights. This follows from standard arguments and the fact that the cubes in the local Calder\'on--Zygmund decomposition are cubes of $\mathcal{R}$.

\begin{theorem}[{cf.\ \cite[Theorem~1.6]{ParissisRela}}]\label{RH}
Fix $w\in A_\infty^\mathcal{R}(\tom)$ and let $Q$ be a cube in $\mathcal R$. Then for all $r \in \big[1,1+\frac{1}{ [w]_{A_\infty^\mathcal{R}(\tom)}-1}\big)$ we have the reverse H\"older inequality
\[
\frac{1}{|Q|}\int_Q w^r(x)\,dx \leq C_{[w]_{A_\infty^\mathcal{R}(\tom)},r}\left(\frac{1}{|Q|}\int_Q w(x)\,dx \right)^r
\]
with
\begin{equation*}
C_{[w]_{A_\infty^\mathcal{R}(\tom)},r}=[w]_{A_\infty^\mathcal{R}(\tom)}\frac{r'-1}{r'-1-2([w]_{A_\infty^\mathcal{R}(\tom)}-1)}.
\end{equation*}
\end{theorem}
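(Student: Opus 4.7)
The plan is to transplant the sharp reverse H\"older argument of Hyt\"onen--P\'erez, in the refined ``flat weights'' form given by Parissis--Rela, to the torus $\tom$. The ingredients that make this possible are (i) the localized Calder\'on--Zygmund decomposition from \cite{Fernandez-Roncal2020}, which on any $Q\in\mathcal{R}$ produces stopping cubes drawn from the localized dyadic family $\mathcal{R}_0(Q)\subset\mathcal{R}$; (ii) Lebesgue differentiation along $\mathcal{R}_0$, which follows from the weak $(1,1)$ bound for $M^{\mathcal{R}_0}$; and (iii) the fact that each dyadic cube in $\mathcal{R}_0$ has exactly two children, so the dyadic doubling constant is $2$. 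This last point is the source of the ``$2$'' appearing in the denominator of the target constant.

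Fix $Q\in\mathcal{R}$ and a stopping parameter $\alpha>1$ to be chosen later. I would build a principal tree $\mathcal{F}=\bigcup_k\mathcal{F}_k$ with $\mathcal{F}_0=\{Q\}$, where $\mathcal{F}_{k+1}(R)$ consists of the maximal $R'\in\mathcal{R}_0(R)$ such that $w_{R'}>\alpha w_R$; maximality together with the binary splitting forces $\alpha w_R<w_{R'}\le 2\alpha w_R$. Applying the Fujii--Wilson definition \eqref{Ainfty} to $w\chi_R$, and using the pointwise bound $M^{\mathcal{R}_0(R)}\le M^{\mathcal{R}}$, one gets
\[
\sum_{R'\in\mathcal{F}_{k+1}(R)}|R'|\le\frac{[w]_{A_\infty^\mathcal{R}(\tom)}}{\alpha}|R|.
\]
Partitioning $Q$ according to the deepest principal ancestor, Lebesgue differentiation on $\mathcal{R}_0$ yields $w(x)\le\alpha w_R$ for a.e.\ $x\in R\setminus\bigcup\mathcal{F}_{k+1}(R)$, so the contribution of the $k$-th generation to $\int_Q w^r$ is at most
\[
\alpha^{r-1}\sum_{R\in\mathcal{F}_k}w_R^{r-1}w(R).
\]

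Summing over $k$ reduces the estimate of $|Q|^{-1}\int_Q w^r$ to a geometric series in $k$; the range of $r$ and the precise constant are read off from when this series converges and what it sums to, after optimizing in $\alpha$. Done naively, using the pointwise upper bound $w_R\le(2\alpha)^k w_Q$ at generation $k$, one obtains a ratio of the form $2^r\alpha^{r-1}[w]_{A_\infty^\mathcal{R}(\tom)}$ and only a subrange of the claimed interval for $r$. The Parissis--Rela refinement replaces this crude generational bound by a distributional control on $\sum_{R\in\mathcal{F}_k}w(R)$ obtained by testing the Fujii--Wilson constant against $w^{r-1}\chi_Q$ rather than against $w\chi_Q$; this is precisely what converts the doubling exponent $2^n$ of the Euclidean case into the plain factor $2$ here, and what eventually yields the sharp denominator $r'-1-2([w]_{A_\infty^\mathcal{R}(\tom)}-1)$.

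The main obstacle is this sharp bookkeeping: keeping track of $w_R^{r-1}$ generation by generation in a way that uses the Fujii--Wilson hypothesis efficiently, so that one recovers exactly the displayed constant rather than a crude $[w]_{A_\infty^\mathcal{R}(\tom)}$-dependent expression. Once that is set up, verifying that every step transplants from $\mathbb{R}^n$ to $\tom$ is essentially routine, since the only features of the ambient space that the argument uses are a localized dyadic structure with binary splitting on $Q\in\mathcal{R}$ and the pointwise comparison $M^{\mathcal{R}_0(Q)}\le M^{\mathcal{R}}$, both of which are in hand.
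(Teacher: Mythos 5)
You have correctly isolated the ingredients the paper has in mind when it says Theorem~\ref{RH} ``follows from standard arguments'': the local Calder\'on--Zygmund decomposition from \cite{Fernandez-Roncal2020}, the fact that the resulting stopping cubes belong to $\mathcal{R}$ (so that \eqref{Ainfty} can be applied to each of them), Lebesgue differentiation via the weak $(1,1)$ bound for $M^{\mathcal{R}_0}$, and the binary doubling factor $2$ that replaces the Euclidean $2^n$. However, the argument you outline does not close, and the fix you then invoke is not the one that makes it close. With your generational estimates, a cube $R\in\mathcal{F}_k$ satisfies $w_R\le (2\alpha)^k w_Q$ and $\sum_{R\in\mathcal{F}_k}|R|\le\big([w]_{A_\infty^\mathcal{R}(\tom)}/\alpha\big)^k|Q|$, so the ratio between consecutive terms of $\sum_k\alpha^{r-1}\sum_{R\in\mathcal{F}_k}w_R^{r}|R|$ is at best $(2\alpha)^r[w]_{A_\infty^\mathcal{R}(\tom)}/\alpha = 2^r\alpha^{r-1}[w]_{A_\infty^\mathcal{R}(\tom)}\ge 2$ for every $\alpha>1$ and $r\ge 1$; the series diverges for \emph{all} admissible parameters, not merely ``on a subrange of $r$'' as you claim. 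The measure estimate that actually exploits $A_\infty^\mathcal{R}$ is obtained by keeping, inside $\int_R M^{\mathcal{R}_0(R)}(w\chi_R)\,dx\le[w]_{A_\infty^\mathcal{R}(\tom)}w(R)$, the contribution of $R\setminus\bigcup\mathcal{F}_{k+1}(R)$, on which $M^{\mathcal{R}_0(R)}(w\chi_R)\ge w_R$; this yields $\big|\bigcup\mathcal{F}_{k+1}(R)\big|\le\frac{[w]_{A_\infty^\mathcal{R}(\tom)}-1}{\alpha-1}\,|R|$, a bound that vanishes as $[w]_{A_\infty^\mathcal{R}(\tom)}\to 1$ and is the entry point of the flat analysis. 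Your bound $[w]_{A_\infty^\mathcal{R}(\tom)}/\alpha$ throws that gain away.

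Your description of the Parissis--Rela refinement is also not accurate. The Fujii--Wilson constant \eqref{Ainfty} is a number attached to $w$; ``testing it against $w^{r-1}\chi_Q$'' is not a defined operation and is not what \cite{ParissisRela} does, and the factor $2$ in the denominator of the displayed constant comes from binary dyadic doubling, not from a choice of test function. The sharp constant is produced by a layer-cake computation followed by a Gr\"onwall-type self-absorption, rather than by generational bookkeeping. Concretely, writing $\Omega_\lambda:=\{x\in Q: M^{\mathcal{R}_0(Q)}(w\chi_Q)(x)>\lambda\}$, maximality of the CZ cubes at level $\lambda\ge w_Q$ together with binary doubling gives $w(\Omega_\lambda)\le 2\lambda|\Omega_\lambda|$, and applying \eqref{Ainfty} to each CZ cube (this is where their membership in $\mathcal{R}$ is used) gives $\int_{\Omega_\lambda}M^{\mathcal{R}_0(Q)}(w\chi_Q)\le[w]_{A_\infty^\mathcal{R}(\tom)}w(\Omega_\lambda)\le 2[w]_{A_\infty^\mathcal{R}(\tom)}\lambda|\Omega_\lambda|$. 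Feeding this and the identity $\int_{\Omega_\lambda}M^{\mathcal{R}_0(Q)}(w\chi_Q)=\lambda|\Omega_\lambda|+\int_\lambda^\infty|\Omega_t|\,dt$ into the layer-cake formula $\int_Q w^r=(r-1)\int_0^\infty\lambda^{r-2}\,w(\{w>\lambda\})\,d\lambda$ produces, after integrating by parts, an inequality in which $\int_{w_Q}^\infty\lambda^{r-1}|\Omega_\lambda|\,d\lambda$ can be absorbed into itself precisely when $r'-1>2([w]_{A_\infty^\mathcal{R}(\tom)}-1)$, yielding the stated constant. I would recommend reworking the proof around this layer-cake scheme, following \cite{Hytoenen2012,ParissisRela}, rather than the stopping forest with your crude packing bound.
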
 

\noindent As an immediate corollary we have the following.

\begin{corollary}\label{cor:AinftyinRH}
Each $A_\infty^\mathcal{R}(\tom)$ weight is a reverse H\"older weight with respect to $\mathcal{R}$.
\end{corollary}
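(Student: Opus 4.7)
The plan is to deduce this corollary immediately from Theorem~\ref{RH}, which is already stated in the excerpt. Given $w \in A_\infty^\mathcal{R}(\tom)$, the Fujii--Wilson constant $[w]_{A_\infty^\mathcal{R}(\tom)}$ is finite, so I would first check that the interval of admissible exponents $\big[1,\,1+\frac{1}{[w]_{A_\infty^\mathcal{R}(\tom)}-1}\big)$ appearing in Theorem~\ref{RH} is non-degenerate, i.e.\ it contains some $r>1$. Since one always has $[w]_{A_\infty^\mathcal{R}(\tom)} \geq 1$ (as can be verified directly from the definition in~\eqref{Ainfty}, because $M^\mathcal{R}(w\chi_Q) \ge w\chi_Q$ a.e.), this interval is a genuine interval of length $\frac{1}{[w]_{A_\infty^\mathcal{R}(\tom)}-1} > 0$ when $[w]_{A_\infty^\mathcal{R}(\tom)} > 1$, and all of $[1,\infty)$ when $[w]_{A_\infty^\mathcal{R}(\tom)} = 1$. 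In either case, some $r > 1$ is admissible.

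Next I would fix any such $r > 1$ in the admissible range. Theorem~\ref{RH} then guarantees that for every cube $Q \in \mathcal{R}$,
\[
\frac{1}{|Q|}\int_Q w(x)^r\,dx \le C_{[w]_{A_\infty^\mathcal{R}(\tom)},r}\,\bigg(\frac{1}{|Q|}\int_Q w(x)\,dx\bigg)^{r},
\]
where the constant $C_{[w]_{A_\infty^\mathcal{R}(\tom)},r}$ is finite and independent of $Q$. Taking $r$-th roots on both sides yields exactly inequality~\eqref{RHr} in Definition~\ref{def:weights1} with constant $C_{[w]_{A_\infty^\mathcal{R}(\tom)},r}^{1/r}$, and uniformly in $Q \in \mathcal{R}$. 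Hence $w \in \mathrm{RH}_r^\mathcal{R}(\tom)$ for this $r$, and by definition $w$ is a reverse Hölder weight with respect to $\mathcal{R}$.

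There is really no obstacle here, since the corollary is a formal consequence of Theorem~\ref{RH}; the only point worth being careful about is the non-degeneracy of the admissible range of exponents. I would therefore keep the proof to a couple of lines, citing Theorem~\ref{RH} and Definition~\ref{def:weights1} and noting that any choice of $r$ in the stated range produces a witness that $w \in \mathrm{RH}_r^\mathcal{R}(\tom)$.
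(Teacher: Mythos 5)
Your proof is correct and follows exactly the route the paper intends: the paper presents this as ``an immediate corollary'' of Theorem~\ref{RH} with no further argument. You supply the (correct) details the paper leaves implicit, namely that $[w]_{A_\infty^\mathcal{R}(\tom)}\ge 1$ (since $M^{\mathcal{R}}\ge M^{\mathcal{R}_0}$ and $M^{\mathcal{R}_0}f\ge |f|$ a.e.) so the interval of admissible $r$'s is nonempty, and that fixing any such $r>1$ and taking $r$-th roots in Theorem~\ref{RH} gives membership in $\mathrm{RH}_r^{\mathcal{R}}(\tom)$.
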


In the Euclidean setting, the corresponding counterpart of the sharp reverse H\"older inequality stated in Theorem~\ref{RH} is useful when proving the so-called Buckley's theorem (see  \cite[Theorem~2.5]{TesisBuckley}), which provides a sharp weighted bound for the Hardy--Littlewood maximal operator in terms of the Muckenhoupt constant of the corresponding weight. In our context, by standard arguments (or by combining Corollaries~\ref{cor:RH=Ap} and \ref{cor:AinftyinRH}) we get that $A_\infty^\mathcal{R}(\tom)$ weights are Muckenhoupt weights.

\begin{corollary}\label{cor:AinftyinAp}
Each $A_\infty^\mathcal{R}(\tom)$ weight is a Muckenhoupt weight with respect to $\mathcal{R}$.
\end{corollary}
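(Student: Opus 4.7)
The statement follows immediately by chaining together the two results cited just above it. The plan is to take $w \in A_\infty^\mathcal{R}(\tom)$ and first invoke Corollary~\ref{cor:AinftyinRH} to conclude that $w \in \mathrm{RH}_r^\mathcal{R}(\tom)$ for some $r \in (1,\infty)$; then Corollary~\ref{cor:RH=Ap} asserts that the classes $\bigcup_{r\in(1,\infty)} \mathrm{RH}_r^\mathcal{R}(\tom)$ and $\bigcup_{p\in(1,\infty)} A_p^\mathcal{R}(\tom)$ coincide, whence $w$ is a Muckenhoupt weight with respect to $\mathcal{R}$. This is really a one-line argument.

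If one prefers the direct ``standard'' route (i.e., without passing through Corollary~\ref{cor:RH=Ap}), the plan is as follows. Given $w \in A_\infty^\mathcal{R}(\tom)$, Theorem~\ref{RH} supplies an $r > 1$, depending only on $[w]_{A_\infty^\mathcal{R}(\tom)}$, for which the reverse H\"older inequality holds on every cube $Q \in \mathcal{R}$. Feeding this into Theorem~\ref{teo:weights2}(2) yields constants $\delta, C > 0$ such that
\[
\frac{w(E)}{w(Q)} \leq C \left( \frac{|E|}{|Q|} \right)^\delta
\]
for all measurable $E \subset Q$ with $Q \in \mathcal{R}$. Rearranging this gives
\[
\frac{|E|}{|Q|} \leq C^{-1/\delta} \left( \frac{w(E)}{w(Q)} \right)^{1/\delta},
\]
which is precisely the condition in Theorem~\ref{teo:weights2}(1) with $\rho = 1/\delta$, and therefore $w \in A_p^\mathcal{R}(\tom)$ for $p$ chosen so that $1/p = \min(1/\delta, 1/2)$ (any choice with $1/p \leq \rho$ works after absorbing constants).

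There is essentially no obstacle here: all the substantive work is already packaged in Theorem~\ref{RH}, Theorem~\ref{teo:weights2}, and the local Calder\'on--Zygmund decomposition on $\mathcal{R}$ (which underlies Corollary~\ref{cor:RH=Ap}). The only thing to be careful about is that the reverse H\"older and Muckenhoupt classes here are defined with respect to $\mathcal{R}$ and not $\mathcal{R}_0$, but this is automatic because Theorem~\ref{RH} produces the inequality on arbitrary cubes $Q \in \mathcal{R}$, and Theorem~\ref{teo:weights2} is stated abstractly for an arbitrary basis $\mathcal{B}$ and applies verbatim with $\mathcal{B} = \mathcal{R}$.
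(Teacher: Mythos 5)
Your first paragraph is correct and is exactly the route the paper takes: chain Corollary~\ref{cor:AinftyinRH} (each $A_\infty^\mathcal{R}(\tom)$ weight is reverse H\"older) with Corollary~\ref{cor:RH=Ap} (the Muckenhoupt and reverse H\"older classes with respect to $\mathcal{R}$ coincide). Nothing more is needed.

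Your second, ``direct'' route, however, contains a genuine error. From
\[
\frac{w(E)}{w(Q)} \leq C \left( \frac{|E|}{|Q|} \right)^\delta
\]
one cannot rearrange to obtain
\[
\frac{|E|}{|Q|} \leq C^{-1/\delta} \left( \frac{w(E)}{w(Q)} \right)^{1/\delta}.
\]
Raising the first inequality to the power $1/\delta$ and dividing by $C^{1/\delta}$ gives $\frac{|E|}{|Q|} \geq C^{-1/\delta} \left( \frac{w(E)}{w(Q)} \right)^{1/\delta}$, i.e.\ the inequality flips: you get a \emph{lower} bound on $|E|/|Q|$, not the upper bound required by Theorem~\ref{teo:weights2}(1). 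The conditions in parts (1) and (2) of Theorem~\ref{teo:weights2} are two genuinely different one-sided estimates, not algebraic rearrangements of one another; their equivalence for the basis $\mathcal{R}$ is exactly the nontrivial content of Corollary~\ref{cor:RH=Ap}, which rests on the local Calder\'on--Zygmund decomposition. (In fact, this is the very point the paper emphasizes in the paragraph following the corollary: for a general basis the two classes need not coincide, and the containment $A_\infty^\mathcal{B} \subseteq \bigcup_p A_p^\mathcal{B}$ cannot be obtained by formal manipulation.) So the ``alternative'' is not an alternative; it silently reuses Corollary~\ref{cor:RH=Ap} at the step labelled ``rearranging''. Stick with your first paragraph, which is complete and correct.
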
 

Nevertheless, in contrast with the Euclidean finite-dimensional case, it is not immediate that Muckenhoupt weights and reverse H\"older weights with respect to $\mathcal{R}$ are $A_\infty^\mathcal{R}(\tom)$ weights (recall that the two former classes are indeed equivalent, see Corollary~\ref{cor:RH=Ap}). 
To sum up, we know that the following relations hold:
\begin{equation}
\label{eq:RHM}
 A_\infty^\mathcal{R}(\tom)\subseteq \bigcup_{p \in (1, \infty)}A_p^\mathcal{R}(\tom) =   \bigcup_{r \in (1, \infty)}\mathrm{RH}_r^\mathcal{R}(\tom).
\end{equation}
However, we have not been able to prove or disprove the missing inclusion. This motivates us to ask the following question.

\begin{question}\label{q:classes}
Is it true that  
$A_\infty^\mathcal{R}(\tom)= \bigcup_{p\in (1, \infty)}A_p^\mathcal{R}(\tom)$? In view of \eqref{eq:RHM} this would be equivalent to prove that $A_\infty^\mathcal{R}(\tom) = \bigcup_{r\in (1, \infty)}\mathrm{RH}_r^\mathcal{R}(\tom)$.
\end{question}

\begin{remark}\label{genuine}
Using standard tools one can show that weights $v(x) = \prod_{j=1}^n w_j(x_j)$, where each $w_j$ belongs to $\bigcup_{p\in (1, \infty)}A_p(\mathbb{T})$, are indeed $A_\infty(\tom)$ weights. Thus a counterexample for Question~\ref{q:classes}, if any exists, probably has more complicated structure. This in particular motivates the seek for ``genuine'' weights on $\tom$. \end{remark}

We also point out that for $p=1$ the inclusion $A_1^\mathcal{R}(\tom) \subset A_\infty^\mathcal{R}(\tom)$ holds. Indeed, consider an $A_1^\mathcal{R}(\tom)$ weight $w$, meaning that for a.e. $x \in \tom$ we have
 \begin{equation*}
 Mw(x)\le [w]_{A_1^\mathcal{R}(\tom)} w(x).   
 \end{equation*}
 Then, we can compute the $A_\infty^\mathcal{R}(\tom)$ constant by noting that, for any cube $Q\in\mathcal{R}$, 
 \begin{equation*}
 \frac{1}{w(Q)}\int_Q M(w\chi_Q)(x)\ dx\le \frac{1}{w(Q)}\int_Q [w]_{A_1^\mathcal{R}(\tom)} w(x)\ dx\le [w]_{A_1^\mathcal{R}(\tom)}.
 \end{equation*}
 This gives us the inequality
 \begin{equation}\label{eq:Ainfty-LE-A1}
 [w]_{A_\infty^\mathcal{R}(\tom)}\le [w]_{A_1^\mathcal{R}(\tom)}
 \end{equation}
 and, as a consequence, the postulated inclusion.

Our second question is in line with the discussion after Corollary~\ref{corolweights_new}.

\begin{question}\label{q:intermediate}
 Given $q_0 \in [1, \infty)$, consider the intermediate bases $\mathcal{R}'$ used in the proof of Corollary~\ref{corollary2}. Is $A_p^{\mathcal{R}'}(\tom)$ the correct class to characterize the weighted boundedness of $M^{\mathcal{R}'}$ on $L^p(w)$ for $p \in (q_0, \infty)$? 
 \end{question} 

To close the paper, we also raise the following question.

\begin{question}
Are there any bases $\mathcal{R}'$ for which the behavior of the corresponding maximal function is significantly different in the unweighted and weighted settings? 
\end{question}

  \section*{Acknowledgements}

The authors would like to thank Emilio Fern\'andez for valuable discussions.

The first, second and fifth authors are supported by the Basque Government through the BERC 2018-2021 program and by the Spanish State Research Agency through BCAM Severo Ochoa excellence accreditation SEV-2017-2018. The fifth author also acknowledges the project PID2020-113156GB-I00, the RyC project RYC2018-025477-I, and Ikerbasque. The third and fourth authors acknowledge the projects PICT-2019--03968, PICT 2018-3399 and UBACyT 20020190200230BA. The third author is also partially supported by PICT-2016-2616 (Joven). 

\bibliographystyle{amsalpha}


\newcommand{\etalchar}[1]{$^{#1}$}
\providecommand{\bysame}{\leavevmode\hbox to3em{\hrulefill}\thinspace}
\providecommand{\MR}{\relax\ifhmode\unskip\space\fi MR }
\providecommand{\MRhref}[2]{%
  \href{http://www.ams.org/mathscinet-getitem?mr=#1}{#2}
}
\providecommand{\href}[2]{#2}

\end{document}